\documentclass[11pt]{article}

\pdfoutput=1

\usepackage{mathrsfs}
\usepackage{amsmath}
\usepackage{amsfonts}
\usepackage{graphicx}
\usepackage{color}
\usepackage{bbm}
\usepackage{amsthm}
\usepackage{amssymb}

\newtheorem{theorem}{Theorem}[section]
\newtheorem{definition}[theorem]{Definition}
\newtheorem{lemma}[theorem]{Lemma}
\newtheorem{remark}[theorem]{Remark}

\textwidth= 160 mm \textheight= 240 mm \oddsidemargin=2 mm \topskip
0.5cm \topmargin=-0.5in


\begin{document}
\title{Problem of eigenvalues of stochastic Hamiltonian systems with boundary conditions and Markov chain}
\author{Tian Chen\footnote{Shandong University, Jinan, China,  chentian43@sdu.edu.cn} \hspace{1cm} Xijun Hu\footnote{Shandong University, Jinan, China,  xjhu@sdu.edu.cn} \hspace{1cm} Zhen Wu\footnote{Shandong University, Jinan, China, wuzhen@sdu.edu.cn }}

\maketitle

\vspace{2mm}\noindent\textbf{Abstract.}\quad In this paper, we study the eigenvalue problem of stochastic Hamiltonian system driven by Brownian motion and Markov chain with boundary conditions and time-dependent coefficients. For any dimensional case, the existence of the first eigenvalue is proven and the corresponding eigenfunctions are constructed by virtue of dual transformation and generalized Riccati equation system. Furthermore, we have more finely characterized the existence of all eigenvalues and constructed the related eigenfunctions for one-dimensional Hamiltonian system. Moreover, the increasing order of these eigenvalues have also been given.

\vspace{2mm} \noindent\textbf{Keywords.}\  Stochastic Hamiltonian system, Markov chain, eigenvalue problem, Riccati equation.

\vspace{2mm}\noindent\textbf{MSC2020.}\  60J10, 34B99, 34L15


\section{Introduction}
As we all know, the linear backward stochastic differential equation (BSDE) was firstly introduced by Bismut \cite{B1978} as the adjoint equation to solve the optimal control problem. It was not until 1990 that Pardoux and Peng \cite{PP1990} gave the general form and the well-posedness of BSDE. Independently, Duffie and Epstein \cite{DE1992} also introduce the BSDE in the economic context, they present a stochastic differential representation of recursive utility, which is generalization of the standard additive utility. Due to the widespread application of BSDE in the field of economics (see \cite{EPQ1997}), it has attracted widespread attention from scholars \cite{BDHPS2003,DMP1996,S1997}.

Stochastic Hamiltonian system, a fully coupled forward-backward stochastic differential equation (FBSDE), was originally introduced in the stochastic optimal control problem as a necessary condition, the stochastic maximum principle (SMP), for optimality, see \cite{B1973,B1981}. FBSDE is formed by coupling a SDE and a BSDE, which appears in various fields, for example, stochastic optimal control \cite{P1990,YZ1999,W2013,CWW2022}, and partial differential equation (PDE) \cite{P1991,PT1999}. The well-posedness of FBSDE is the key to the research in the above-mentioned issues, Antonelli first studied such a problem in \cite{A1993} by virtue of a fixed point theorem on a relatively small time interval. For the case where the time interval is arbitrarily fixed, there are three methods to study this problem. The first one is introduce by Ma et al. \cite{MPY1994} and called the four-step scheme approach. The well-posedness can be obtained by the relationship between the Markovian FBSDE and PDE. The second one is the method of continuation under the Monotonicity condition, which was established by Hu and Peng  \cite{HP1995} and developed by \cite{Y1997,PW1999}. The last one is given by Pardoux and Tang \cite{PT1999}. By virtue of the discounting method and fixed point theorem under the weakly coupled condition. Later, Ma et al. \cite{MWZZ2015} proposed a unified framework approach to solve the one-dimensional FBSDE in a general non-Markovian structure.

All literature works above concern the well-posedness of FBSDE as well as stochastic Hamiltonian system. Peng \cite{P2000} first studied the problem of eigenvalues of stochastic Hamiltonian systems with boundary conditions. The goal of this problem is to find some real number $\rho_i\in \mathbb R$, $i=1,2,\cdots$, such that the parameterized stochastic Hamiltonian system admits a trivial solution, i.e. $(x_t,y_t,z_t)=(0,0,0)$, and some non-trivial solution. Here, $\rho_i$ is called eigenvalues and the corresponding non-trivial solutions are called eigenfunctions of this Hamiltonian system. Inspired by these results, Wang and Wu \cite{WW2017} investigated this problem of the Hamiltonian systems driven by Poission process and Brownian motion. Recently, Jing and Wang \cite{JW2023} extend the related results in \cite{P2000} from time-invariant case to time-dependent case. Moreover, they present the order of growth for these eigenvalues.

Recently, due to the widespread application of Markov chains in related fields such as economics and engineering, it has attracted the attention of scholars. Compared with the traditional model, regime-switching system can better describe the transition of system state \cite{Zhang2001,ZY2003}. At the same time, the SMP of regime-switching system has also been further developed. Donnelly \cite{D2011} proved a sufficient SMP of regime-switching diffusion model, Tao and Wu \cite{TW2012} gave the necessary and sufficient SMP of forward-backward regime-switching system, interested readers also may refer \cite{ZES2012,SGZ2018}.

In this paper, we investigate a class of eigenvalues problem of the stochastic Hamiltonian system with Markov chain and time-dependent coefficients matrix. The eigenvalue problem of time-variant system is more complex than that of time-invariant system, thus we need to characterize it more finely. In order to investigate this problem, we introduce the dual Legendre transformation of general Hamiltonian system with Markov chain and present the relationship between the Hamiltonian system and a generalized Ricccati equations system. Based on these results, the existence of the first eigenvalue can be proved and the related eigenfunctions can be constructed for multi-dimensional case. Furthermore, for one-dimensional case, we present all eigenvalue located in $(\rho_b,+\infty)$ and construct all corresponding eigenfunctions. Moreover, we also study the existence of all eigenvalue in $\mathbb R$ under some sharper assumption and present the order of growth for these eigenvalues.

The remaining sections are organized as follows. Section 2 gives some preliminary and formulates the eigenvalue problem of the stochastic Hamiltonian system driven by Brownian motion and Markov chain. We introduced the dual transformation of Hamiltonian system with Markov chain in general and linear cases in Section 3.  In Section 4, for arbitrarily dimensional case, we give the existence of the first eigenvalue and constructed the related eigenfunctions. For one-dimensional case, all eigenvalues located in $(\rho_b,+\infty)$ are given and the related eigenfunctions are constructed in Section 5. And we present the increasing order of these eigenvalues.

\section{Preliminary and Problem Formulation}
Given $T>0$. Let $(\Omega,\mathcal F, \mathbb{F},\mathbb{P})$ be the filtered complete probability space with an one-dimensional standard Brownian motion $W$ and a continuous-time Markov chain $\alpha$ valued in $\mathcal{D}=\{1,2,\dots,m\}$, and suppose they are mutual independent. Here, $\mathcal{F}_t$ is generated by $\{W_s,\alpha_s\}_{0\leq s\leq t}$ augmented by all $\mathbb P$-null set $\mathcal{N}$, and define $\mathbb F^{W}\triangleq \{\mathcal F^W_t\}_{0\leq t\leq T}$ with $\mathcal F^W_t\triangleq \sigma\{W_s\}_{ 0\leq s\leq t} \vee \mathcal N$, $\mathbb F^{\alpha}\triangleq \{\mathcal F^{\alpha}_t\}_{0\leq t\leq T}$ with $\mathcal F^{\alpha}_t=\sigma\{\alpha_s\}_{0\leq s\leq t}\vee \mathcal N$. $\mathbb{R}^m$ is the $m$-dimensional Euclidean space, $|\cdot|$ and $\langle \cdot,\cdot\rangle$ denote the norm and scalar product in the Euclidean space, $D^{\top}$ ($D^{-1}$) denotes the transposition (reverse) of matrix $D$. $\mathbb S^n$ ($\mathbb{S}^n_+$) denotes the set of symmetric (positive semi-definite) $n\times n$ matrices. Denote $M>(\geq,\gg) 0$, when $M\in \mathbb{S}^n$ is positive definite (positive semi-definite, uniformly positive definite).

In addition, we assume that the generator of $\alpha$ is $Q=(q_{ij})_{m\times m}$. Note that $q_{ij}\geq 0$, for $i\ne j$ and $\sum_{j=1}^mq_{ij}=0$, so $q_{ii}\leq 0$. In what follows, we further assume that $q_{ij}>0$ for $i\ne j$, so $q_{ii}<0$. Define function $f^i : D \rightarrow R, f^i_x=I_{\{i\}}(x)$, $i\in D$, which has the following semimartingale decomposition,
\begin{eqnarray*}
  \begin{aligned}\label{fdecom}
    f^i_{\alpha_t}&=f^i_{\alpha_0} + \int_0^t\sum_{j=1}^m q_{\alpha_s j}f^i_j\mathrm{d}s+M^i_t
    =f^i_{\alpha_0}+\int_0^t q_{\alpha_si}\mathrm{d}s+M_t^i,
  \end{aligned}
\end{eqnarray*}
where $M^i$ is an $\mathbb F$ martingale satisfying $ E|M^i_t|^2\! <\! \infty$. Define $ V_t^{ij}\!=\!\sum_{0<s\leq t}f^i_{\alpha_{s-}}f^j_{\alpha_s}$, $i\neq j$, which counts the number of times that $\alpha$ jumps from $i$ to $j$ up to time $t$. Since $i\neq j$, we have
\begin{eqnarray*}
  \begin{aligned}
    V^{ij}_t &= \sum_{0<s\leq t}f^i_{\alpha_{s-}}f^j_{\alpha_s}=\sum_{0<s\leq t}f^i_{\alpha_{s-}}\left(f^j_{\alpha_s} -f^j_{\alpha_{s-}}\right)
    =\sum_{0<s\leq t}f^i_{\alpha_{s-}}\left(\Delta f^j_{\alpha}\right)_s\\
    &=\int_0^tf^i_{\alpha_{s-}}\mathrm{d}f^j_{\alpha_s}
    =\int_0^t f^i_{\alpha_s}q_{\alpha_sj}\mathrm{d}s+ \int_0^tf^i_{\alpha_{s-}}\mathrm{d}M^j_s.
  \end{aligned}
\end{eqnarray*}
We give the notation $\sum_{i\neq j}^n$ as abbreviation for $\sum_{j=1}^n\sum_{i=1,i\neq j}^n$. Then we define
\begin{eqnarray*}
  \begin{aligned}
    V_t:= \sum_{i\neq j}^n V^{ij}_t =\int_0^t \sum_{i\neq j}^n \Big(q_{\alpha_sj}f^i_{\alpha_s}\mathrm{d}s+f^i_{\alpha_{s-}}\mathrm{d}M_s^j\Big)
    =\int_0^t r_s\mathrm{d}s+\tilde{V}_t,
  \end{aligned}
\end{eqnarray*}
where $r_s=\sum_{i\neq j}^m q_{\alpha_sj}f^i_{\alpha_s}$, $\tilde{V}_t=\sum_{i\neq j}^m \int_0^tf^i_{\alpha_{s-}}\mathrm{d}M_s^j$. 


Now, we introduce some notations which will be used in this
paper. For any given Hilbert space $\mathbb{H}$ and filtration $\mathbb F$,
let $L^2_{\mathcal F_T}(\Omega;\mathbb{H})$ be the space of $\mathbb{H}$-valued $\mathcal{F}_T$-measurable square-integrable random variables; $L^{\infty}(0,T;\mathbb H)$ be the space of $\mathbb{H}$-valued deterministic uniformly bounded functions; $L^2_{\mathbb F}(0,T;\mathbb H)$ be the space of $\mathbb{H}$-valued $\mathcal{F}_t$-adapted square-integrable processes; $F^2_{\mathbb F}(0,T;\mathbb H)$ be the space of $\mathbb{H}$-valued $\mathcal{F}_t$-predictable square-integrable processes; $S^2_{\mathbb F}(0,T;\mathbb H)$ be the space of $\mathbb{H}$-valued $\mathcal{F}_t$-adapted c\`adl\`ag processes satisfying $\mathbb{E}[\sup_{0\leq t\leq T}|x_t| ^2]<\infty$; $M^2_{\mathbb F}(0,T;\mathbb H)$ be the space of $\mathbb{H}$-valued $\mathcal{F}_t$-predictable processes satisfying $\mathbb{E}[\int_0^T|x_t|^2r_t\mathrm{d}t ]<\infty$; $C(0,T;\mathbb S^n)$ be the space of $\mathbb S^{n}$-valued continuously differential functions.

\subsection{Preliminary}

In this subsection, we first give the existence and uniqueness result of the FBSDE with regime-switching. Consider the following FBSDE,
\begin{equation}\label{RSFBSDE}
  \left\{
  \begin{aligned}
    &\mathrm{d}x_t=b(t,x_t,y_t,z_t,\theta_{t})\mathrm{d}t+\sigma(t,x_t,y_t,z_t,\theta_{t})\mathrm{d}B_t+\gamma(t,x_{t-},y_{t-},z_t,\theta_{t})\mathrm{d}\tilde V_t,\\
    &\mathrm{d}y_t=-f(t,x_t,y_t,z_t,\theta_{t})\mathrm{d}t+z_t\mathrm{d}B_t+\theta_{t}\mathrm{d}\tilde V_t,\\
    &x_0=x_0,\qquad y_T=\Phi(x_T),
  \end{aligned}
  \right.
\end{equation}
where $b,\sigma,\gamma,f:[0,T]\times\Omega\times \mathbb R^n\times \mathbb R^n\times \mathbb R^n\times \mathbb R^{n}\rightarrow \mathbb R^n$ are all progressive processes and $\Phi:\Omega\times \mathbb R^n\rightarrow \mathbb R^n$ is $\mathcal F_T$ measurable process. Let
\begin{equation*}
  \begin{aligned}
    F=\left(\begin{matrix}
      -f\\
      b\\
      \sigma\\
      \gamma
    \end{matrix}\right):(t,x,y,z,\theta)\in [0,T]\times \Omega\times \mathbb R^n\times \mathbb R^n\times \mathbb R^n\times \mathbb R^{n}\rightarrow \mathbb R^{4n}.
  \end{aligned}
\end{equation*}
And then we introduce the following assumption.

\noindent\textbf{Assumption (H1)} The function $F$ and $\Phi$ satisfy

\noindent (i) For any $\xi:=(x,y,z,\theta)\in \mathbb R^n\times \mathbb R^n\times \mathbb R^n\times \mathbb R^{n}$, $F(\cdot,\xi)\in L^2_{\mathbb F}(0,T,\mathbb R^{4n})$;

\noindent (ii) There exists a constant $C>0$ such that
\begin{equation*}
  \begin{aligned}
    &|F(t,\xi_1)-F(t,\xi_2)|\leq C|\xi_1-\xi_2|,\quad \forall \xi_1,\xi_2\in \mathbb R^{4n},\\
    &|\Phi(x_1)-\Phi(x_2)|\leq C|x_1-x_2|,\quad \forall x_1,x_2\in \mathbb R^n.
  \end{aligned}
\end{equation*}

\noindent (iii) There exists a constant $\beta>0$ such that
\begin{equation*}
  \begin{aligned}
    &\langle F(t,\xi_1)-F(t,\xi_2),\xi_1-\xi_2\rangle \leq -\beta|\xi_1-\xi_2|^2,\quad \forall \xi_1,\xi_2\in \mathbb R^{4n},\\
    &\langle \Phi(x_1)-\Phi(x_2),x_1-x_2\rangle \geq 0,\quad \forall x_1,x_2\in \mathbb R^n.
  \end{aligned}
\end{equation*}

Inspired by \cite{HP1995,PW1999}, we have the following theorem.
\begin{theorem}\label{thm21}
  Under Assumption (H1), regime-switching FBSDE \eqref{RSFBSDE} admits a unique solution $(x,y,z,\theta)\in L^2_{\mathbb F}(0,T;\mathbb R^n)\times L^2_{\mathbb F}(0,T;\mathbb R^n)\times F^2_{\mathbb F}(0,T;\mathbb R^n)\times M^2_{\mathbb F}(0,T;\mathbb R^{n})$.
\end{theorem}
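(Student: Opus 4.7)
The plan is to adapt the method of continuation of Hu--Peng \cite{HP1995} and Peng--Wu \cite{PW1999} to the regime-switching framework; the essential novelty is the handling of the compensated jump martingale $\tilde V$ and the predictable integrand $\theta\in M^2_{\mathbb F}$. I would embed \eqref{RSFBSDE} into a one-parameter family $\{F_\lambda,\Phi_\lambda\}_{\lambda\in[0,1]}$ with auxiliary source data $(\phi,\psi)\in L^2_{\mathcal F_T}(\Omega;\mathbb R^n)\times L^2_{\mathbb F}(0,T;\mathbb R^{4n})$, setting
\begin{equation*}
F_\lambda(t,\xi)=\lambda F(t,\xi)+(1-\lambda)F_0(\xi)+\psi_t,\qquad \Phi_\lambda(x)=\lambda\Phi(x)+(1-\lambda)x+\phi,
\end{equation*}
where $F_0$ is a linear reference coefficient preserving the monotonicity (H1)(iii) along the interpolation and chosen so that the FBSDE at $\lambda=0$ admits an explicit solution for any source data.

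The core is an a priori estimate. For two solutions $(x^i,y^i,z^i,\theta^i)$ at level $\lambda$ driven by $(\phi^i,\psi^i)$, set $\Delta x:=x^1-x^2$ and similarly for the other components, and apply the semimartingale It\^o formula to $\langle\Delta x_t,\Delta y_t\rangle$. The cross quadratic-covariation from the two $\mathrm{d}\tilde V$-integrals, combined with the compensator identity $V_t=\int_0^t r_s\,\mathrm{d}s+\tilde V_t$, produces, after taking expectations, exactly
\begin{equation*}
\mathbb E\!\int_0^T\!\langle\Delta\gamma_t,\Delta\theta_t\rangle\,r_t\,\mathrm{d}t.
\end{equation*}
Because $\alpha$ takes values in a finite set with $q_{ii}<0$, one has $0<c_1\le r_t\le c_2$ almost surely, so this $r_t$-weighted pairing is equivalent to the Euclidean pairing in which (H1)(iii) is stated, and it is precisely the quadratic form underlying the $M^2_{\mathbb F}$-norm of $\theta$. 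The four-block monotonicity of $F$ and the monotonicity of $\Phi$ in the boundary term $\mathbb E\langle\Delta x_T,\Delta y_T\rangle$ then yield
\begin{equation*}
\mathbb E\!\int_0^T\!\bigl(|\Delta x_t|^2+|\Delta y_t|^2+|\Delta z_t|^2+|\Delta\theta_t|^2\bigr)\mathrm{d}t\leq C\bigl(\mathbb E|\phi^1-\phi^2|^2+\|\psi^1-\psi^2\|^2_{L^2}\bigr),
\end{equation*}
with $C$ independent of $\lambda\in[0,1]$.

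Using this estimate as the contraction bound for a Picard iteration, in which the increment $F_{\lambda_0}\mapsto F_{\lambda_0+\delta}$ is absorbed into the source term $\psi$ and the analogous shift is absorbed into $\phi$, one shows that solvability at $\lambda_0$ for arbitrary source data propagates to solvability at $\lambda_0+\delta$, with $\delta>0$ depending only on $\beta$ and $C$, not on $\lambda_0$. Starting from $\lambda=0$, a finite number of steps reaches $\lambda=1$ with $(\phi,\psi)=(0,0)$, which is exactly \eqref{RSFBSDE}; uniqueness is immediate from the a priori estimate. The main obstacle is the verification in the second step that the $M^2_{\mathbb F}$-pairing emerges cleanly from the semimartingale It\^o formula with Markov-chain jumps, so that the monotonicity (H1)(iii), stated in the unweighted Euclidean inner product, converts into a coercive bound compatible with the $M^2_{\mathbb F}$-norm in which $\theta$ is measured; once this identification is secured, the rest is a direct transcription of the continuation scheme of \cite{HP1995,PW1999}.
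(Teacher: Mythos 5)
Your overall strategy---the method of continuation with a uniform-in-$\lambda$ a priori estimate obtained by applying It\^o's formula to $\langle\Delta x_t,\Delta y_t\rangle$---is exactly the route the paper intends: it gives no proof of Theorem~2.1 beyond the phrase ``inspired by \cite{HP1995,PW1999}'', and those references are precisely the continuation scheme you describe. Your computation of the jump contribution is also right: since at most one $V^{ij}$ jumps at a time, $[\tilde V,\tilde V]_t=V_t$ with compensator $\int_0^t r_s\,\mathrm{d}s$, and $r_s=-q_{\alpha_s\alpha_s}$ is bounded above and below by positive constants because $\mathcal D$ is finite and $q_{ii}<0$. So the It\^o identity does produce the term $\mathbb E\int_0^T\langle\Delta\gamma_t,\Delta\theta_t\rangle r_t\,\mathrm{d}t$, consistent with measuring $\theta$ in $M^2_{\mathbb F}$.

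The gap is in the step where you claim that, because $0<c_1\le r_t\le c_2$, ``this $r_t$-weighted pairing is equivalent to the Euclidean pairing in which (H1)(iii) is stated.'' Two-sided bounds on $r_t$ give equivalence of the \emph{norms} $|\Delta\theta|^2$ and $r_t|\Delta\theta|^2$, but the quantity $\langle\Delta\gamma_t,\Delta\theta_t\rangle$ is sign-indefinite, so the inequality
\begin{equation*}
-\langle\Delta f,\Delta x\rangle+\langle\Delta b,\Delta y\rangle+\langle\Delta\sigma,\Delta z\rangle+\langle\Delta\gamma,\Delta\theta\rangle\le-\beta|\Delta\xi|^2
\end{equation*}
does not transfer to the same expression with $\langle\Delta\gamma,\Delta\theta\rangle$ replaced by $r_t\langle\Delta\gamma,\Delta\theta\rangle$: the error term $(r_t-1)\langle\Delta\gamma,\Delta\theta\rangle$ is of size $\|r-1\|_\infty\,C\,|\Delta\xi|\,|\Delta\theta|$ and cannot be absorbed into $-\beta|\Delta\xi|^2$ unless $\|r-1\|_\infty C<\beta$, which is not assumed. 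To close the argument you must either (a)~read (H1)(iii) with the intensity weight built into the $(\gamma,\theta)$-component, i.e.\ require $-\langle\Delta f,\Delta x\rangle+\langle\Delta b,\Delta y\rangle+\langle\Delta\sigma,\Delta z\rangle+r_t\langle\Delta\gamma,\Delta\theta\rangle\le-\beta(|\Delta x|^2+|\Delta y|^2+|\Delta z|^2+r_t|\Delta\theta|^2)$, which is the form that pairs correctly with the $M^2_{\mathbb F}$-norm; or (b)~rescale $\theta\mapsto\theta\sqrt{r}$ and $\gamma\mapsto\gamma\sqrt{r}$ before imposing Lipschitz and monotonicity, so that the unweighted condition applies to the rescaled coefficients. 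Either fix is routine, but as written the conversion from (H1)(iii) to the coercive bound is asserted rather than proved, and the assertion as stated is false for general $r$. Once that identification is made precise, the rest of your continuation argument (uniform a priori estimate, Picard step of uniform length $\delta$, finitely many steps from $\lambda=0$ to $\lambda=1$, uniqueness from the estimate) is a correct transcription of \cite{HP1995,PW1999}.
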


\subsection{Problem Formulation}
In this subsection, we can study the boundary problem of stochastic Hamiltonian system with regime-switching by virtue of Theorem \ref{thm21}. Let $h(x,y,z,\theta): \mathbb R^n\times \mathbb R^n\times \mathbb R^{n}\times \mathbb R^{n}\rightarrow \mathbb R$ be a $\mathcal C^1$ real function of $(x,y,z,\theta)$, which is called a Hamiltonian function, and let $\Phi(i,x):\mathcal D\times \mathbb R^n\rightarrow \mathbb R$ be a $\mathcal C^1$ real function of $x$. The problem is to find a quadruple $(x,y,z,\theta)\in L^2_{\mathbb F}(0,T;\mathbb R^n)\times L^2_{\mathbb F}(0,T;\mathbb R^n)\times F^2_{\mathbb F}(0,T;\mathbb R^n)\times M^2_{\mathbb F}(0,T;\mathbb R^{n})$ satisfying the following stochastic Hamiltonian system,
\begin{equation}\label{Hamiltonian}
  \left\{
  \begin{aligned}
    &\mathrm{d}x_t=\partial_y h(t,x_t,y_t,z_t,\theta_{t})\mathrm{d}t+\partial_z h(t,x_t,y_t,z_t,\theta_{t})\mathrm{d}B_t+\partial_{\theta} h({t},x_{t-},y_{t-},z_t,\theta_{t})\mathrm{d}\tilde V_t,\\
    &\mathrm{d}y_t=-\partial_x h(t,x_t,y_t,z_t,\theta_{t})\mathrm{d}t+z_t\mathrm{d}B_t+\theta_{t} \mathrm{d}\tilde V_t,\\
    &x_0=x_0,\qquad y_T=\partial_x\Psi(x_T),
  \end{aligned}
  \right.
\end{equation}
where $\partial_x h$, $\partial_y h$, $\partial_z h$ and $\partial_{\theta} h$ are respectively gradients of $h$ with respect to $x$, $y$, $z$ and $\theta$. Obviously, FBSDE \eqref{Hamiltonian} is a special case of \eqref{RSFBSDE} with
\begin{equation*}
  \begin{aligned}
    F=\left(\begin{matrix}
      -\partial_x h\\
      \partial_y h\\
      \partial_z h\\
      \partial_{\theta} h
    \end{matrix}\right)\qquad \text{and} \qquad \Phi=\partial_x\Psi.
  \end{aligned}
\end{equation*}
If $F$ and $\Phi$ satisfy Assumption (H1), then we can know that the Hamiltonian system \eqref{Hamiltonian} admits a unique solution by Theorem \ref{thm21}.

Now, we are ready to formulate the eigenvalue problem for stochastic Hamiltonian system with Markov chain. Suppose that $\bar h(t,x,y,z,\theta):[0,T]\times  \mathbb R^n\times \mathbb R^n\times \mathbb R^n\times \mathbb R^{n} \rightarrow \mathbb R$ is a $\mathcal C^1$ function of $(x,y,z,\theta)$, and denote for each $\rho\in \mathbb R$,
\begin{equation}
  h^{\rho}(t,x,y,z,\theta)=h(t,x,y,z,\theta)+\rho\bar h(t,x,y,z,\theta).
\end{equation}
In addition, we assume that for $(x,y,z,\theta)=(0,0,0,0)$,
\begin{equation*}
  \partial_xh=\partial_yh=\partial_zh=\partial_{\theta}h=\partial_x\bar h=\partial_y\bar h=\partial_z\bar h=\partial_{\theta}\bar h=0.
\end{equation*}
Consider the following parameterized stochastic Hamiltonian system with Markov chain,
\begin{equation}\label{FBSDE0}
  \left\{
  \begin{aligned}
    &\mathrm{d}x_t=\partial_y h^{\rho}(t,x_t,y_t,z_t,\theta_t)\mathrm{d}t+\partial_z h^{\rho}(t,x_t,y_t,z_t,\theta_{t})\mathrm{d}B_t
    +\partial_{\theta} h^{\rho}(t,x_{t-},y_{t-},z_t,\theta_{t})\mathrm{d}\tilde V_t,\\
    &\mathrm{d}y_t=-\partial_x h^{\rho}(\alpha_t,x_t,y_t,z_t,\theta_{1,t},\theta_{2,t})\mathrm{d}t+z_t\mathrm{d}B_t+\theta_{1,t} \mathrm{d}\tilde V_t^1+\theta_{2,t} \mathrm{d}\tilde V_t^2,\\
    &x_0=0,\qquad y_T=0.
  \end{aligned}
  \right.
\end{equation}
It is easy to see that $(x,y,z,\theta)=(0,0,0,0)$ is a trivial solution of FBSDE \eqref{FBSDE0}. The eigenvalue problem is to find some real number $\rho\in \mathbb R$ such that \eqref{FBSDE0} admits nontrivial solutions. In this paper, we focus on the case that $h$ and $\bar h$ are in the form of
\begin{equation*}
  h(t,\xi)=\frac{1}{2}\langle H(t)\xi,\xi\rangle,\qquad \bar h(t,\xi)=\frac{1}{2}\langle \bar H(t)\xi,\xi\rangle,\qquad \forall \xi:=(x,y,z,\theta)\in \mathbb R^{4n}.
\end{equation*}
where $H(\cdot), \bar H(\cdot)\in C(0,T;\mathbb S^{4n})$ has the following form,
\begin{equation}\label{coefficient}
  \begin{aligned}
    H(t)=\left(\begin{matrix}
      H_{11}(t)&H_{12}(t)&H_{13}(t)&H_{14}(t)\\
      H_{21}(t)&H_{22}(t)&H_{23}(t)&H_{24}(t)\\
      H_{31}(t)&H_{32}(t)&H_{33}(t)&0\\
      H_{41}(t)&H_{42}(t)&0&H_{44}(t)\\
    \end{matrix}\right),\quad
    \bar H(t)=\left(\begin{matrix}
      \bar H_{11}(t)&\bar H_{12}(t)&\bar H_{13}(t)&\bar H_{14}(t)\\
      \bar H_{21}(t)&\bar H_{22}(t)&\bar H_{23}(t)&\bar H_{24}(t)\\
      \bar H_{31}(t)&\bar H_{32}(t)&\bar H_{33}(t)&0\\
      \bar H_{41}(t)&\bar H_{42}(t)&0&\bar H_{44}(t)\\
    \end{matrix}\right).
  \end{aligned}
\end{equation}
Here, $H_{kl}(t), \bar H_{kl}(t)\in C(0,T;\mathbb R^{n\times n})$, $k,l=1,2,3,4$ such that $H_{kl}^{\top}(t)=H_{lk}(t)$. Thus for the FBSDE with time-dependent coefficients \eqref{coefficient} without perturbation, i.e., $\bar H(t)\equiv 0$, (iii) in Assumption (H1) (the monotonicity condition) is equivalent to, for uniformly $t\in [0,T]$,
\begin{equation}\label{mono}
  \begin{aligned}
    \left(\begin{matrix}
      -H_{11}(t)&-H_{12}(t)&-H_{13}(t)&-H_{14}(t)\\
      H_{21}(t)&H_{22}(t)&H_{23}(t)&H_{24}(t)\\
      H_{31}(t)&H_{32}(t)&H_{33}(t)&0\\
      H_{41}(t)&H_{42}(t)&0&H_{44}(t)\\
    \end{matrix}\right)\leq -\beta I_{4n},
  \end{aligned}
\end{equation}
where $\beta>0$ is a constant and $I_n$ is the $n$-order unit matrix. Furthermore, we also have
\begin{equation*}\begin{aligned}
  H_{11}(t)\geq \beta I_{n},\quad H_{22}(t)\leq -\beta I_n,\quad H_{33}(t)\leq -\beta I_n,\quad
  H_{44}(t)\leq -\beta I_n,\quad t\in [0,T],
\end{aligned}\end{equation*}
and
\begin{equation}\label{377}\begin{aligned}
  -H_{11}(t)+H_{13}(t)H_{33}^{-1}(t)H_{31}(t) +H_{14}(t)H_{44}^{-1}(t)H_{41}(t)&<0,\\
  H_{22}(t)-H_{23}(t)H^{-1}_{33}(t)H_{32}(t)
  -H_{24}(t)H_{44}^{-1}(t)H_{42}(t)&<0.
\end{aligned}\end{equation}
Then the stochastic Hamiltonian system \eqref{Hamiltonian} can be rewritten as
\begin{equation}\label{LFBSDE}
  \left\{
  \begin{aligned}
    &\mathrm{d}x_t=\{H_{21}^{\rho}(t)x_t+H_{22}^{\rho}(t)y_t +H_{23}^{\rho}(t)z_t+ H_{24}^{\rho}(t)\theta_{t}\}\mathrm{d}t\\
    &\qquad+[H_{31}^{\rho}(t)x_t+H_{32}^{\rho}(t)y_t+H_{33}^{\rho}(t)z_t]\mathrm{d}B_t\\
    &\qquad +[H_{41}^{\rho}(t)x_{t-}+ H_{42}^{\rho}(t)y_{t-}+H_{44}^{\rho}(t)\theta_{t}]\mathrm{d}\tilde V_t,\\
    &\mathrm{d}y_t=-[H_{11}^{\rho}(t)x_t+H_{12}^{\rho}(t)y_t+H_{13}^{\rho}(t)z_t+H_{14}^{\rho}(t)\theta_{t}]
    \mathrm{d}t+z_t\mathrm{d}B_t+\theta_{t}\mathrm{d}\tilde V_t,\\
    &x_0=0,\qquad y_T=0.
  \end{aligned}
  \right.
\end{equation}
Now we first give the definition of eigenvalues and eigenfunctions of stochastic Hamiltonian system with Markov chain as follows.
\begin{definition}
  A real number $\rho$ is called an eigenvalue of the linear stochastic Hamiltonian system \eqref{LFBSDE} if there exist nontrivial solutions $(x,y,z,\theta)\not\equiv 0$ of regime-switching FBSDE \eqref{LFBSDE} corresponding to $\rho$. These solutions are the eigenfunctions corresponding to $\rho$. All eigenfunctions corresponding to the eigenvalue $\rho$ constitute of a linear subspace of $L^2_{\mathbb F}(0,T;\mathbb R^n)\times L^2_{\mathbb F}(0,T;\mathbb R^n)\times F^2_{\mathbb F}(0,T;\mathbb R^n)\times M^2_{\mathbb F}(0,T;\mathbb R^{n})$ called the eigenfunction subspace corresponding to $\rho$.
\end{definition}

In fact, if the condition \eqref{mono} holds, regime-switching FBSDE \eqref{LFBSDE} admits a unique solution $(x,y,z,\theta)=(0,0,0,0)$ corresponding to $\rho=0$ by Theorem \ref{thm21}. Thus $\rho=0$ cannot be an eigenvalue of stochastic Hamiltonian system \eqref{LFBSDE}.

\section{Dual Transformation of Hamiltonian System}\label{3}
Inspired by \cite{P2000,WW2017}, we introduce the dual transformation, which is a powerful tool for solving the eigenvalue problem, of stochastic Hamiltonian system with Markov chain.

\subsection{General Case}

In this subsection, if the stochastic Hamiltonian system \eqref{Hamiltonian} admits a unique solution, we exchange the role of $x$ and $y$, i.e. $(\tilde x,\tilde y)=(y,x)$, to see whether $(\tilde x,\tilde y)$ still satisfy some stochastic Hamiltonian system. In order to study this problem, we need introduce the following assumption.

\noindent \textbf{Assumption (H2)} $h$ and $\Phi$ are both $\mathcal C^2$ functions. Moreover, for all $(x,y)\in  \mathbb R^n\times \mathbb R^n$, $h(t,x,y,\cdot,\cdot)$ is concave and $\Phi(\cdot)$ is convex.  

Then we give the following Legendre transformation for $h$ with respect to $(z,\theta)$, and for $\Phi$ with respect to $x$:
\begin{equation*}
  \begin{aligned}
    \tilde h(t,\tilde x,\tilde y,\tilde z,\tilde\theta)&=\inf_{(z,\theta)\in \mathbb R^{2n}}\big\{\langle z,\tilde z\rangle+\langle \theta,\tilde\theta\rangle-h(t,\tilde y,\tilde x,z,\theta)\big\}\\
    &=\langle z^*,\tilde z\rangle+\langle \theta_1^*,\tilde\theta_1\rangle+\langle \theta_2^*,\tilde\theta_2\rangle-h(t,\tilde y,\tilde x,z^*,\theta^*),\\
    \tilde\Phi(\tilde x)&=\sup_{x\in \mathbb R^n}\big\{\langle x,\tilde x\rangle -\Phi(x)\big\}=\langle x^*(\tilde x),\tilde x\rangle -\Phi(x^*(\tilde x)),
  \end{aligned}
\end{equation*}
where $(z^*,\theta^*):=(z^*(t,\tilde x,\tilde y,\tilde z,\tilde\theta),\theta^*(t,\tilde x,\tilde y,\tilde z,\tilde\theta))$ is the unique minimum point for each $(t,\tilde x,\tilde y,\tilde z,\tilde \theta)$, and $x^*(\tilde x)$ is the unique maximum point for each $\tilde x$. $\tilde h$ is called the dual Hamiltonian function of \eqref{Hamiltonian}. Under Assumption (H2), we can know that $\tilde h$ and $\tilde\Phi$ are well-defined. Moreover, we have
\begin{equation*}
  \begin{aligned}
    h(t,x,y,z,\theta)&=\inf_{(\tilde z,\tilde\theta)\in \mathbb R^{2n}}\big\{\langle z,\tilde z\rangle+\langle \theta,\tilde\theta\rangle-\tilde h(t,y,x,\tilde z,\tilde \theta)\big\}\\
    &=\langle z,\tilde z^*\rangle+\langle \theta_1,\tilde\theta_1^*\rangle+\langle \theta_2,\tilde\theta_2^*\rangle-\tilde h(t,y,x,\tilde z^*,\tilde\theta^*),\\
    \Phi(x)&=\sup_{\tilde x\in \mathbb R^n}\big\{\langle x,\tilde x\rangle-\tilde\Phi(\tilde x)\big\}=\langle x,\tilde x^*(x)\rangle-\tilde\Phi(\tilde x^*(x)),
  \end{aligned}
\end{equation*}
where $(\tilde z^*,\tilde\theta^*):=(\tilde z^*(t,x,y,z,\theta), \tilde\theta^*(t,x,y,z,\theta)$, is the unique minimum point for each $(t,x,y,z,\theta)$, and $\tilde x^*(x)$ is the unique maximum point for each $x$. Furthermore, we obtain
\begin{equation}
  \begin{aligned}
    &\tilde x=\partial_x\Phi(x),\quad &&\tilde z=\partial_z h(t,\tilde y,\tilde x,z,\theta),\\ &\tilde\theta=\partial_{\theta} h(t,\tilde y,\tilde x,z,\theta), \quad   &&\forall (\tilde x,\tilde y,\tilde z,\tilde\theta)\in \mathbb R^n\times \mathbb R^n\times \mathbb R^n\times \mathbb R^{n},\\
    &x=\partial_{\tilde x}\tilde\Phi(\tilde x), \quad &&z=\partial_{\tilde z} \tilde h(t,y,x,\tilde z,\tilde\theta),\\
    &\theta=\partial_{\tilde\theta} \tilde h(t,y,x,\tilde z,\tilde\theta),\quad  &&\forall (x,y,z,\theta)\in \mathbb R^n\times \mathbb R^n\times \mathbb R^n\times \mathbb R^{n}.
  \end{aligned}
\end{equation}
Then we can easily know that the quadruple $(\tilde x_t,\tilde y_t,\tilde z_t,\tilde \theta_t)$ defined by
\begin{equation}\label{12612}
  \begin{aligned}
    &\tilde x_t=\tilde y_t,\quad \tilde y_t=\tilde x_t,\quad \tilde z_t=\partial_zh(t,x_{t-},y_{t-},z_t,\theta_{t}),\quad
    \tilde \theta_{t}=\partial_{\theta}h(t,x_{t-},y_{t-},z_t, \theta_{t}),
  \end{aligned}
\end{equation}
satisfies the following stochastic Hamilton system with Markov chain,
\begin{equation}\label{dual}\left\{
  \begin{aligned}
    &\mathrm{d}\tilde x_t=\partial_{\tilde y}\tilde h(t,\tilde x_t,\tilde y_t,\tilde z_t,\tilde \theta_{t})\mathrm{d}t +\partial_{\tilde z}\tilde h(t,\tilde x_t,\tilde y_t,\tilde z_t,\tilde \theta_{t})\mathrm{d}B_t +\partial_{\tilde \theta}\tilde h(t,\tilde x_{t-},\tilde y_{t-},\tilde z_t,\tilde \theta_{t}) \mathrm{d}\tilde V_t,\\
    &\mathrm{d}\tilde y_t=\partial_{\tilde x}\tilde h(t,\tilde x_t,\tilde y_t,\tilde z_t,\tilde \theta_{t})\mathrm{d}t+\tilde z_t\mathrm{d}B_t+\tilde\theta_{t}\mathrm{d}\tilde V_t,\\
    &\tilde x_0=y_0,\qquad \tilde y_T=\partial_{\tilde x}\tilde\Phi(\tilde x_T).
  \end{aligned}
\right.\end{equation}
Hamiltonian system \eqref{dual} is called the dual Hamiltonian system of the original Hamiltonian system \eqref{Hamiltonian}. And the Hamiltonian system \eqref{Hamiltonian} is also the dual of \eqref{dual}.

\subsection{Linear Case}

In this subsection, we study the linear case. Consider the following linear Hamiltonian system with regime-switching,
\begin{equation}\label{12614}
  \left\{
  \begin{aligned}
    &\mathrm{d}x_t=[H_{21}(t)x_t+H_{22}(t)y_t +H_{23}(t) z_t+H_{24}(t) \theta_{t}]\mathrm{d}t\\
    &\qquad\quad +[H_{31}(t)x_t+H_{32}(t)y_t+H_{33}(t)z_t]\mathrm{d}B_t\\
    &\qquad\quad+[H_{41}(t)x_{t-}+H_{42}(t)y_{t-}+H_{44}(t)\theta_{t}]\mathrm{d}\tilde V_t,\\
    &\mathrm{d}y_t=-[H_{11}(t)x_t+H_{12}(t)y_t+H_{13}(t)z_t+H_{14}(t) \theta_{t}]\mathrm{d}t+z_t\mathrm{d}B_t+\theta_{t}\mathrm{d}\tilde V_t,\\
    &x_0=x_0,\qquad y_T=Mx_T.
  \end{aligned}
  \right.
\end{equation}
where $H_{kl}(t)\in C(0,T;\mathbb R^n)$ and $M$ is a $n\times n$ matrix such that $H^{\top}_{kl}(t)=H_{lk}(t)$, $k,l=1,2,3,4$, $M^{\top}=M$. Assumption (H2) means that
\begin{equation*}
  H_{33}(t)<0,\quad H_{44}(t)<0,\quad H_{55}(t)<0,\quad \text{and}\quad M>0,\quad  t\in [0,T].
\end{equation*}
According to \eqref{12612}, we define
\begin{equation*}
  \begin{aligned}
    &\tilde x_t= y_t,\qquad \tilde z_t=H_{31}(t)x_{t-}+H_{32}(t)y_{t-}+H_{33} (t)z_t,\\
    &\tilde y_t= x_t,\qquad \tilde \theta_{t}=H_{41}(t)x_{t-}+H_{42}(t)y_{t-}+H_{44} (t)\theta_{t},
  \end{aligned}
\end{equation*}
then the dual Hamiltonian system of \eqref{12614} is
\begin{equation}\label{3913}
  \left\{
  \begin{aligned}
    &\mathrm{d}\tilde x_t=[\tilde H_{21}(t)\tilde x_t+\tilde H_{22}(t)\tilde y_t+\tilde H_{23}(t) \tilde z_t+\tilde H_{24}(t)\tilde \theta_{t}]\mathrm{d}t\\
    &\qquad\quad+[\tilde H_{31}(t)\tilde x_t+\tilde H_{32}(t)\tilde y_t+\tilde H_{33}(t) \tilde z_t]\mathrm{d}B_t\\
    &\qquad\quad +[\tilde H_{41}(t)\tilde x_{t-}+\tilde H_{42}(t)\tilde y_{t-} +\tilde H_{44}(t)\tilde \theta_{t}]\mathrm{d}\tilde V_t,\\
    &\mathrm{d}\tilde y_t=-[\tilde H_{11}(t)\tilde x_t+\tilde H_{12}(t)\tilde y_t+\tilde H_{13}(t) \tilde z_t+\tilde H_{14}(t)\tilde \theta_{t}]\mathrm{d}t+\tilde z_t\mathrm{d}B_t+\tilde\theta_{t}\mathrm{d}\tilde V_t,\\
    &\tilde x_0=y_0,\qquad \tilde y_T=M^{-1}\tilde x_T,
  \end{aligned}
  \right.
\end{equation}
where (we suppress the variance $t$)
\begin{equation*}
  \begin{aligned}
    \tilde H=\left(\begin{matrix}
      \tilde H_{11}&\tilde H_{12}& -H_{23}H_{33}^{-1}&-H_{24}H_{44}^{-1}\\
      \tilde H_{21}&\tilde H_{22}&-H_{13}H_{33}^{-1}&-H_{14}H_{44}^{-1}\\
      -H_{33}^{-1}H_{32}&-H_{33}^{-1}H_{31}&H_{33}^{-1}&0\\
      -H_{44}^{-1}H_{42}&-H_{44}^{-1}H_{41}&0&H_{44}^{-1}
    \end{matrix}\right).
  \end{aligned}
\end{equation*}
with $\tilde H_{11}=H_{23}H_{33}^{-1}H_{32}+H_{24}H_{44}^{-1}H_{42}-H_{22}$, $\tilde H_{12}=H_{23}H_{33}^{-1}H_{31} +H_{24}H_{44}^{-1}H_{41}-H_{21}$, $\tilde H_{21}=H_{13}H_{33}^{-1}H_{32}+H_{14}H_{44}^{-1}H_{42}-H_{12}$, $\tilde H_{22} =H_{13}H_{33}^{-1}H_{31}+H_{14}H_{44}^{-1}H_{41}-H_{11}$.

\subsection{Generalized Riccati equation systems}\label{3.3}
In this subsection, we would like to use the generalized Riccati equation systems to analyze the stochastic Hamiltonian system \eqref{12614}. The Riccati equation is widely applied to study the linear-quadratic optimal control problems, see \cite{W1968,ZES2012,ZLX2021}. Inspired by \cite{ZES2012}, we introduce a dynamic system on some interval $[T_1,T_2]\subseteq [0,T]$ as follows,
\begin{equation}\label{Riccati1}
  \left\{
  \begin{aligned}
    &-\dot{K}_t=K_t[H_{21}(t)+H_{22}(t)K_t+H_{23}(t)L_t+H_{24}(t)P_t]\\
    &\qquad\qquad\quad +H_{11}(t)+H_{12}(t)K_t+H_{13}(t)L_t+H_{14}(t)P_t,\\
    &L_t=K_t[H_{31}(t)+H_{32}(t)K_t+H_{33}(t)L_t],\\
    &P_t=K_t[H_{41}(t)+H_{42}(t)K_t+H_{44}(t)P_t],\\
    &K_{T_2}=\bar K\in \mathbb S^n.
  \end{aligned}
  \right.
\end{equation}
This system is called a generalized Riccati equation system. Note that the three algebraic equations in \eqref{Riccati1} are equivalent to
\begin{equation*}
  \begin{aligned}
    &[I_n-K_tH_{33}(t)]L_t=K(t)[H_{31}(t)+H_{32}(t)K_t],\\
    &[I_n-K_tH_{44}(t)]P_t=K(t)[H_{41}(t)+H_{42}(t)K_t],
  \end{aligned}
\end{equation*}
If we assume that
\begin{equation}\label{assum}
  \begin{aligned}
    &\det[I_n-K_tH_{33}(t)]\ne 0,\qquad \det[I_n-K_tH_{44}(t)]\ne 0,
  \end{aligned}
\end{equation}
Hence $[I_n-K_tH_{33}(t)]^{-1}$ and $[I_n-K_tH_{44}(t)]^{-1}$ exist and are both uniformly bounded due to the continuity of $\det[I_n-K_tH_{33}(t)]$ and $\det[I_n-K_tH_{44}(t)]$. Then $L_{\cdot}$ and $P_{\cdot}$ can be represented by $K_{\cdot}$ as follows,
\begin{equation*}
  \begin{aligned}
    &L_t=F_0(K_t)[H_{31}(t)+H_{32}(t)K_t],\\
    &P_t=F_1(K_t)[H_{41}(t)+H_{42}(t)K_t],
  \end{aligned}
\end{equation*}
where $F_0(K_t)=[I_n-K_tH_{33}(t)]^{-1}K_t$ and $F_1(K_t)=[I_n-K_tH_{44}(t)]^{-1}K_t$. Thus \eqref{Riccati1} can be rewritten as
\begin{equation}\label{3616}
  \begin{aligned}
    -\dot{K}_t&=K_tH_{21}(t)+H_{12}(t)K_t+H_{11}(t)+K_tH_{22}(t)K_t\\
    &\qquad+[K_tH_{23}(t)+H_{13}(t)]F_0(K_t)[H_{31}(t)+H_{32}(t) K_t]\\
    &\qquad+[K_tH_{24}(t)+H_{14}(t)]F_1(K_t)[H_{41}(t)+H_{42}(t) K_t].
  \end{aligned}
\end{equation}
\begin{remark}\label{rem31} Let $\delta_1>\delta>0$ such that $-\delta_1 I_n\leq H_{33}(t)\leq -\delta I_n$ and $-\delta_1 I_n\leq H_{44}(t)\leq -\delta I_n$, $t\in [0,T]$, then we have

  i) For any $K\in \mathbb S^n$ satisfying $K>-\frac{1}{2\delta_1}I_n$, there exists a positive constant $c$ such that
  \begin{equation*}
    \|(I_n-KH_{33}(t))^{-1}\|\leq c,\qquad \|(I_n-KH_{44}(t))^{-1}\|\leq c,\quad \forall t\in [0,T].
  \end{equation*}

  ii) For any $K\in \mathbb S^n_+$, $(F_0(K))^{\top}=F_0(K)$ and $(F_1(K))^{\top}=F_1(K)$. If $K>0$, then 
  \begin{equation*}
    0\leq F_0(K)\leq -H_{33}^{-1}(t)\leq -\frac{1}{\delta} I_n,\qquad 0\leq F_1(K)\leq -H_{44}^{-1}(t)\leq -\frac{1}{\delta} I_n.
  \end{equation*}

  iii) For any $K_1,K_2\in \mathbb S^n_{+}$, $K_1\geq K_2$, then $F_0(K_1)\geq F_0(K_2)$ and $F_1(K_1)\geq F_1(K_2)$.
\end{remark}

Next, we will show the relationship between the generalized Riccati equation system \eqref{Riccati1} and linear Hamiltonian system \eqref{12614}.

\begin{lemma}\label{lemma3.2}
  Suppose that \eqref{Riccati1} admits a unique solution $(K_{\cdot},L_{\cdot},P_{\cdot})$ on some interval $[T_1,T_2]\subseteq [0,T]$. Then Hamiltonian system \eqref{12614} with the bounded condition
  \begin{equation}\label{22116}
    x_{T_1}=x_0,\qquad y_{T_2}=\bar K x_{T_2},
  \end{equation}
  admits a solution
  \begin{equation}\label{22117}
    (x_t,y_t,z_t,\theta_{t})=(X_t,K_tX_t,L_tX_{t-},P_tX_{t-}),
  \end{equation}
  where $X_t$ satisfies
  \begin{equation}\label{22519}
  \left\{
  \begin{aligned}
    &\mathrm{d}X_t=[H_{21}(t)+H_{22}(t)K_t+H_{23}(t)L_t+H_{24}(t) P_t]X_t\mathrm{d}t\\
    &\qquad\quad +[H_{31}(t)+H_{32}(t)K_t +H_{33}(t)L_t]X_t\mathrm{d}B_t\\
    &\qquad\quad+[H_{41}(t)+H_{42}(t)K_t+H_{44}(t)P_t]X_{t-} \mathrm{d}\tilde V_t,\\
    &X_{T_1}=x_0.
  \end{aligned}
  \right.
\end{equation}
Moreover, let condition \eqref{assum} or the following weaker condition hold,
\begin{equation}\label{weaker}
  \begin{aligned}
    &[I_n-K_tH_{33}(t)]^{\top}[I_n-K_tH_{33}(t)]\geq c[H_{13}(t)+K_tH_{23}(t)]^{\top}[H_{13}(t)+K_tH_{23}(t)],\\
    &[I_n-K_tH_{44}(t)]^{\top}[I_n-K_tH_{44}(t)]\geq c[H_{14}(t)+K_tH_{24}(t)]^{\top}[H_{14}(t)+K_tH_{24}(t)],
  \end{aligned}
\end{equation}
where $c$ is a positive constant, then the Hamiltonian system \eqref{12614} with boundary condition \eqref{22116} admits a unique solution.
\end{lemma}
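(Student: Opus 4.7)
The plan is to split the proof of Lemma \ref{lemma3.2} into existence of the explicit solution and uniqueness of any solution.

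For existence, observe that the SDE \eqref{22519} has bounded continuous coefficients on the compact interval $[T_1,T_2]$ (since $K,L,P$ are continuous there), so by the standard theory of SDEs with Markov-chain jumps it admits a unique strong solution $X$. Setting $(x_t,y_t,z_t,\theta_t):=(X_t,K_tX_t,L_tX_{t-},P_tX_{t-})$ and applying It\^o's formula to $K_tX_t$ (no cross-bracket arises because $K$ is deterministic and $C^1$) gives $d(K_tX_t)=\dot K_tX_t\,dt+K_t\,dX_t$. Substituting $dX_t$ from \eqref{22519} and $-\dot K_t$ from \eqref{Riccati1}, together with the two algebraic identities $L=K[H_{31}+H_{32}K+H_{33}L]$ and $P=K[H_{41}+H_{42}K+H_{44}P]$, the drift of $d(K_tX_t)$ collapses exactly to $-[H_{11}x+H_{12}y+H_{13}z+H_{14}\theta]$, while the $dB_t$ and $d\tilde V_t$ integrands reduce to $z_t$ and $\theta_t$ respectively. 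The boundary data $x_{T_1}=X_{T_1}=x_0$ and $y_{T_2}=K_{T_2}X_{T_2}=\bar K x_{T_2}$ are immediate.

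For uniqueness, fix any solution $(x,y,z,\theta)$ of \eqref{12614} with boundary data \eqref{22116}, and set $\tilde y_t:=y_t-K_tx_t$, $\tilde z_t:=z_t-L_tx_{t-}$, $\tilde\theta_t:=\theta_t-P_tx_{t-}$. Reversing the existence computation — expanding $d(K_tx_t)$ via It\^o and subtracting from $dy_t$, then invoking the Riccati ODE and the two algebraic identities — yields the linear BSDE
\begin{equation*}
d\tilde y_t=-\bigl[A_t\tilde y_t+(H_{13}+K_tH_{23})\tilde z_t+(H_{14}+K_tH_{24})\tilde\theta_t\bigr]dt+\bigl[(I_n-K_tH_{33})\tilde z_t-K_tH_{32}\tilde y_t\bigr]dB_t+\bigl[(I_n-K_tH_{44})\tilde\theta_t-K_tH_{42}\tilde y_t\bigr]d\tilde V_t,
\end{equation*}
with $\tilde y_{T_2}=\bar K x_{T_2}-\bar K x_{T_2}=0$, where $A$ is bounded. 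Under assumption \eqref{assum}, Remark \ref{rem31} shows $I_n-K_tH_{33}$ and $I_n-K_tH_{44}$ are uniformly invertible on $[T_1,T_2]$, so I can introduce the genuine martingale integrands $\hat z_t=(I_n-K_tH_{33})\tilde z_t-K_tH_{32}\tilde y_t$ and $\hat\theta_t=(I_n-K_tH_{44})\tilde\theta_t-K_tH_{42}\tilde y_t$, express $\tilde z,\tilde\theta$ as bounded linear functions of $(\tilde y,\hat z,\hat\theta)$, and recast the equation as a standard Lipschitz linear BSDE whose only solution with zero terminal is trivial. Hence $\tilde y\equiv\tilde z\equiv\tilde\theta\equiv 0$, so $y=Kx$, $z=Lx_-$, $\theta=Px_-$; substitution into the forward equation in \eqref{12614} yields exactly \eqref{22519}, which has a unique solution.

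The main difficulty will be uniqueness under the weaker assumption \eqref{weaker}, where the outright inversion used above is unavailable. My plan there is to apply It\^o's formula to $|\tilde y_t|^2$, integrate from $t$ to $T_2$, and take expectations. The quadratic-variation contribution produces the non-negative terms $\IE\int_t^{T_2}|(I_n-K_sH_{33})\tilde z_s-K_sH_{32}\tilde y_s|^2ds$ and $\IE\int_t^{T_2}|(I_n-K_sH_{44})\tilde\theta_s-K_sH_{42}\tilde y_s|^2 r_s\,ds$, while the drift produces cross products with $(H_{13}+KH_{23})\tilde z_s$ and $(H_{14}+KH_{24})\tilde\theta_s$. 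The inequality \eqref{weaker} is tailored precisely so that $|(H_{13}+KH_{23})\tilde z|^2\leq c^{-1}|(I_n-KH_{33})\tilde z|^2$ and analogously for the $\tilde\theta$ term, so a weighted Cauchy--Schwarz absorbs the cross products into the quadratic squares plus a multiple of $|\tilde y|^2$. Gronwall then forces $\tilde y\equiv 0$; tracking equality back in the estimates yields $(I_n-KH_{33})\tilde z\equiv 0$ and $(I_n-KH_{44})\tilde\theta\equiv 0$, and matching the martingale parts of the identity $dy=K\,dx+\dot K x\,dt$ via the algebraic relations in \eqref{Riccati1} shows that the residual contributions of $\tilde z,\tilde\theta$ to the forward equation for $x$ cancel, so $x$ again solves \eqref{22519} and is therefore unique. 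Careful bookkeeping with the compensator $r_s\,ds$ of $d\tilde V$ in the It\^o isometry is the recurring technical point throughout both cases.
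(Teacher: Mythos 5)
Your proposal tracks the paper's own argument almost step for step: existence by applying It\^o's formula to $K_tX_t$ and cancelling the drift via the Riccati ODE together with the two algebraic identities for $L$ and $P$; uniqueness under \eqref{assum} by forming the differences $y-Kx$, $z-Lx_-$, $\theta-Px_-$, passing to the transformed integrands $(I_n-K_tH_{33})\tilde z_t-K_tH_{32}\tilde y_t$ and $(I_n-K_tH_{44})\tilde\theta_t-K_tH_{42}\tilde y_t$ (the paper's $\check z,\check\theta$ in \eqref{22521}), and invoking uniqueness of the resulting linear BSDE with zero terminal datum. The only place you go beyond the paper is the case of the weaker condition \eqref{weaker}, which the paper dispatches with ``by the similar method''; your $|\tilde y_t|^2$ energy estimate, using \eqref{weaker} to dominate the drift cross terms by the squared martingale integrands and handling the compensator $r_s\,\mathrm{d}s$ of $\tilde V$, is the natural way to fill that in and is sound up to the point where it yields $\tilde y\equiv 0$, $(I_n-K_tH_{33})\tilde z_t\equiv 0$ and $(I_n-K_tH_{44})\tilde\theta_t\equiv 0$.

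The genuine gap is your final step under \eqref{weaker}. From $(I_n-K_tH_{33})\tilde z_t\equiv 0$ you cannot conclude $\tilde z\equiv 0$ unless $I_n-K_tH_{33}$ is injective, which is exactly \eqref{assum}; and the claim that ``the residual contributions of $\tilde z,\tilde\theta$ to the forward equation cancel'' is false as stated, because the diffusion coefficient of the forward equation carries the term $H_{33}(t)\tilde z_t$, and $H_{33}$ is invertible, so this term vanishes only if $\tilde z$ does. Worse, the obstruction is not merely technical: for any predictable $v_t\in\ker(I_n-K_tH_{33}(t))$ one has $(H_{13}+K_tH_{23})v_t=0$ by \eqref{weaker}, and a direct check shows that replacing $z$ by $Lx_-+v$ and letting $x$ solve the correspondingly perturbed forward SDE produces another solution of \eqref{12614} with boundary data \eqref{22116}. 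So uniqueness under \eqref{weaker} alone actually requires the kernels of $I_n-K_tH_{33}(t)$ and $I_n-K_tH_{44}(t)$ to be trivial for a.e.\ $t$. This defect is inherited from the paper (whose proof asserts the same conclusion without justification), but your sketch papers over it with an incorrect cancellation claim rather than resolving or flagging it.
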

\begin{proof}
  Applying It\^o's formula to $K_tx_t$, we easily verify that \eqref{22117} is a solution of the Hamiltonian system \eqref{12614} with boundary condition \eqref{22116}.

  Then we consider the uniqueness under the condition \eqref{assum}. Let $(x_t,y_t,z_t,\theta_{t})$ be a solution of \eqref{12614} with boundary condition \eqref{22116} and $(\bar y_t,\bar z_t,\bar \theta_{t})=(K_tx_t,L_tx_{t-}, P_tx_{t-})$. Then applying It\^o's formula to $\bar y_t$, we obtain
  \begin{equation*}
    \begin{aligned}
      -\mathrm{d}\bar y_t
      &=\{K_t[H_{22}(t)\bar y_t+H_{23}(t)\bar z_t+H_{24}(t)\bar\theta_{t} ]+H_{11}(t)x_t+H_{12}(t)\bar y_t +H_{13}(t)\bar z_t+H_{14}(t)\bar \theta_{t}\}\mathrm{d}t\\
      &\quad -K_t[H_{22}(t)y_t+H_{23}(t) z_t +H_{24}(t)\theta_{t}]\mathrm{d}t -K_t[H_{31}(t)x_t+H_{32}(t)y_t+H_{33}(t)z_t]\mathrm{d}B_t\\
      &\quad-K_t[H_{41}(t)x_{t-}+H_{42}(t) y_{t-}+H_{44}(t)\theta_{t}]\mathrm{d}\tilde V_t.
    \end{aligned}
  \end{equation*}
  Define $(\hat y_t,\hat z_t,\hat\theta_{1,t},\hat\theta_{2,t})=(\bar y_t-y_t,\bar z_t-z_t,\bar\theta_{1,t}-\theta_{1, t},\bar\theta_{2,t}-\theta_{2,t})$, thus
  \begin{equation*}
    \begin{aligned}
      -\mathrm{d}\bar y_t
      &=\{K_t[H_{22}(t)\hat y_t+H_{23}(t)\hat z_t+H_{24}(t)\hat\theta_{t}]+H_{11}(t)x_t+H_{12}(t)\bar y_t+H_{13}(t)\bar z_t\\
      &\quad+H_{14}(t)\bar \theta_{t}\}\mathrm{d}t -K_t[H_{31}(t)x_t+H_{32}(t)y_t+H_{33}(t)z_t]\mathrm{d}B_t\\
      &\quad-K_t[H_{41}(t)x_{t-}+H_{42}(t) y_{t-}+H_{44}(t)\theta_{t}]\mathrm{d}\tilde V_t.
    \end{aligned}
  \end{equation*}
  Noticing that
  \begin{equation*}
    \begin{aligned}
      &L_t=K_t[H_{31}(t)+H_{32}(t)K_t+H_{33}(t)L_t],\\
    &P_t=K_t[H_{41}(t)+H_{42}(t)K_t+H_{44}(t)P_t],
    \end{aligned}
  \end{equation*}
  then
  \begin{equation*}
    \begin{aligned}
      &K_tH_{31}(t)x_{t-}=\bar z_t-K_t[H_{32}(t)\bar y_{t-}+H_{33}(t)\bar z_{t}],\\
      &K_tH_{41}(t)x_{t-}=\bar\theta_{t}-K_t[H_{42}(t)\bar y_{t-}+H_{44}(t)\bar \theta_{t}].
    \end{aligned}
  \end{equation*}
  Hence, we obtain
  \begin{equation*}
    \begin{aligned}
      -\mathrm{d}\hat y_t
      &=\{K_t[H_{22}(t)\hat y_t+H_{23}(t)\hat z_t+H_{24}(t)\hat\theta_{t}] +H_{12}(t)\hat y_t+H_{13}(t)\hat z_t+H_{14}(t)\hat \theta_{t}\}\mathrm{d}t\\
      &\quad+\{K_t[H_{32}(t)\hat y_t+H_{33}(t)\hat z_t]-\hat z_t\}\mathrm{d}B_t +\{K_t[H_{42}(t)\hat y_{t-}+H_{44}(t)\hat\theta_{t}]-\hat\theta_{t}\}\mathrm{d}\tilde V_t.
    \end{aligned}
  \end{equation*}
  Recalling condition \eqref{assum}, we can know that $[I_n-K_tH_{33}(t)]^{-1}$ and $[I_n-K_tH_{44}(t)]^{-1}$ are uniformly bounded, then above equation can be rewritten as follows,
  \begin{equation}\label{22520}
    \begin{aligned}
      -\mathrm{d}\hat y_t
      &=\{[H_{12}(t)+K_tH_{22}(t)]\hat y_t\\
      &\quad+[H_{13}(t)+K_tH_{23}(t)][I_n-K_tH_{33}(t)]^{-1}[\check z_t+K_tH_{32}(t)\hat y_t]\\
      &\quad+[H_{14}(t)+K_tH_{24}(t)][I_n-K_tH_{44}(t)]^{-1}[\check\theta_{t}
      +K_tH_{42}(t)\hat y_t]  \}\mathrm{d}t\\
      &\quad-\check z_t\mathrm{d}B_t-\check\theta_{1,t}\mathrm{d}\tilde V^1_t-\check \theta_{2,t}\mathrm{d}\tilde V^2_t.
    \end{aligned}
  \end{equation}
  where
  \begin{equation}\label{22521}
    \begin{aligned}
      &\check z_t=-K_tH_{32}(t)\hat y_t+[I_n-K_tH_{33}(t)]\hat z_t,\\
      &\check \theta_{t}=-K_tH_{42}(t)\hat y_t+[I_n-K_tH_{44}(t)]\hat \theta_{t}.
    \end{aligned}
  \end{equation}
  According to \cite[Theorem 2.1]{W1999}, \eqref{22520} admits a unique solution $(\hat y_t,\check z_t, \check\theta_{t})\equiv (0,0,0)$. Furthermore, we have $(\hat y_t,\hat z_t, \hat \theta_{t})\equiv (0,0,0)$ by the transformation \eqref{22521}, which implies that
  \begin{equation*}
    y_t=K_tx_t,\qquad z_t=L_tx_{t-},\qquad \theta_{t}=P_tx_{t-}.
  \end{equation*}

  In addition, under the weaker condition \eqref{weaker} rather than condition \eqref{assum}, we also can show that $(\hat y_t,\check z_t, \check\theta_{t})\equiv (0,0,0)$ by the similar method. Thus, it follows from the SDE in stochastic Hamiltonian system \eqref{12614} that $x_t$ is a solution to SDE \eqref{22519}. Hence we have $(x_t,y_t,z_t,\theta_{t})=(X_t,K_tX_t,L_t X_{t-},P_tX_{t-})$. Then the proof is completed.
\end{proof}

From the similar argument, we obtain the following remark of dual Hamiltonian system.
\begin{remark}
  The generalized Riccati system related to the dual Hamiltonian system has the following form,
  \begin{equation}\label{22723}
  \left\{
  \begin{aligned}
    &-\dot{\tilde K}_t=\tilde K_t[\tilde H_{21}(t)+\tilde H_{22}(t)\tilde K_t+\tilde H_{23}(t)\tilde L_t+\tilde H_{24}(t)\tilde P_t]\\
    &\qquad\qquad\quad +\tilde H_{11}(t)+\tilde H_{12}(t)\tilde K_t+\tilde H_{13}(t)\tilde L_t+\tilde H_{14}(t)\tilde P_t,\\
    &\tilde L_t=\tilde K_t[\tilde H_{31}(t)+\tilde H_{32}(t)\tilde K_t+\tilde H_{33}(t)\tilde L_t],\\
    &\tilde P_t=\tilde K_t[\tilde H_{41}(t)+\tilde H_{42}(t)\tilde K_t+\tilde H_{44}(t)\tilde P_t].
  \end{aligned}
  \right.
\end{equation}
If the following condition hold,
\begin{equation*}
  \begin{aligned}
    &\det[I_n-\tilde K_t\tilde H_{33}(t)]\ne 0,\qquad \det[I_n-\tilde K_t\tilde H_{44}(t)]\ne 0,
  \end{aligned}
\end{equation*}
then generalized Riccati equation system \eqref{22723} can be rewritten as
 \begin{equation}\label{3323}
  \begin{aligned}
    -\dot{\tilde K}_t&=\tilde K_t\tilde H_{21}(t)+\tilde H_{12}(t)\tilde K_t+\tilde H_{11}(t)+\tilde K_t\tilde H_{22}(t)\tilde K_t\\
    &\qquad+[\tilde K_t\tilde H_{23}(t)+\tilde H_{13}(t)]\tilde F_0(\tilde K_t)[\tilde H_{31}(t)+\tilde H_{32}(t) \tilde K_t]\\
    &\qquad+[\tilde K_t\tilde H_{24}(t)+\tilde H_{14}(t)]\tilde F_1(\tilde K_t)[\tilde H_{41}(t)+\tilde H_{42}(t) \tilde K_t].
  \end{aligned}
\end{equation}
where $\tilde F_0(\tilde K_t)=[I_n-\tilde K_t\tilde H_{33}(t)]^{-1}\tilde K_t$ and $\tilde F_1(\tilde K_t)=[I_n-\tilde K_t\tilde H_{44}(t)]^{-1}\tilde K_t$.
\end{remark}


\section{Multi-dimensional Case}
In this section, we investigate the eigenvalue problem of stochastic Hamiltonian system \eqref{LFBSDE} with the perturbation matrix $\bar H$ as follows,
\begin{equation*}
  \bar H(t)=\left(\begin{matrix}
    0&0&H_{13}(t)&H_{14}(t)\\
    0&H_{22}(t)&H_{23}(t)&H_{24}(t)\\
    H_{31}(t)&H_{32}(t)&0&0\\
    H_{41}(t)&H_{42}(t)&0&0
  \end{matrix}\right).
\end{equation*}
Then Hamiltonian system \eqref{LFBSDE} becomes
\begin{equation}\label{3825}
  \left\{
  \begin{aligned}
    &\mathrm{d}x_t=[H_{21}(t)x_t+\varrho H_{22}(t)y_t +\varrho H_{23}(t) z_t+\varrho H_{24}(t) \theta_{t}]\mathrm{d}t\\
    &\qquad\quad +[\varrho H_{31}(t)x_t+\varrho H_{32}(t)y_t+H_{33}(t)z_t]\mathrm{d}B_t\\
    &\qquad\quad+[\varrho H_{41}(t)x_{t-}+\varrho H_{42}(t)y_{t-}+H_{44}(t)\theta_{t}]\mathrm{d}\tilde V_t,\\
    &\mathrm{d}y_t=-[H_{11}(t)x_t+H_{12}(t)y_t+\varrho H_{13}(t)z_t+\varrho H_{14}(t) \theta_{t}]\mathrm{d}t+z_t\mathrm{d}B_t+\theta_{t}\mathrm{d}\tilde V_t,\\
    &x_0=0,\qquad y_T=0,
  \end{aligned}
  \right.
\end{equation}
where $\varrho =1-\rho$. And we introduce some assumption to ensure the stochastic Hamiltonian system is well-defined.

\noindent \textbf{Assumption (H3)}

i) $H^{\top}=H$, $H_{kl}\in C(0,T;\mathbb R^{n\times n})$, $k,l=1,2,3,4$.

ii) $H(t)$ satisfies the condition \eqref{mono} uniformly in $t\in [0,T]$.

Then the corresponding Riccati equation \eqref{3616} can be rewritten as
\begin{equation}\label{3725}\left\{
  \begin{aligned}
    &-\dot{K}_t=K_tH_{21}(t)+H_{12}(t)K_t+H_{11}(t)+\varrho K_tH_{22}(t)K_t\\
    &\qquad\quad+\varrho^2 [K_tH_{23}(t)+H_{13}(t)]F_0(K_t)[H_{31}(t)+H_{32}(t) K_t]\\
    &\qquad\quad+\varrho^2[K_tH_{24}(t)+H_{14}(t)]F_1(K_t)[H_{41}(t)+H_{42}(t) K_t],\\
    &K_T=0.
  \end{aligned}
\right.\end{equation}
From the classical ODE theory, we know that Riccati equation \eqref{3725} admits a unique solution on a sufficiently small interval $(t_{\varrho},T]$. It follows from \cite[Lemma 8.1]{P2000} that $ K_t(\varrho)\geq 0$. Here, $t^K_{\varrho}$ is the so-called `blow-up time' of Riccati equation \eqref{3725}. More specifically, we give the definition of the blow-up time as follows,
\begin{equation*}
  t_{\varrho}^K:=\sup\big\{t_0|t_0<T, \lim_{t\searrow t_0}\|K_t(\varrho)\|=\infty\big\}.
\end{equation*}

Then we have the following result about the property of Blow-up time.

\begin{lemma}\label{lemma41}
  Let Assumption (H3) hold. For Riccati equation \eqref{3725}, when $\varrho\in [0,1]$, there is no explosion occurring, i.e. $t^K_{\varrho}=-\infty$. When $\varrho\in (-\infty,0)$, the blow-up time $t^K_{\varrho}<T$ is a finite number that is decreasing with respect to $\varrho$. Moreover, we obtain
  \begin{equation}\label{3226}
    \lim_{\varrho\searrow -\infty}t^K_{\varrho}=T,\qquad \lim_{\varrho\nearrow 0}t^K_{\varrho}=-\infty.
  \end{equation}
\end{lemma}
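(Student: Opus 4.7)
The plan is to split Lemma 4.1 into four claims --- non-explosion on $[0,1]$, monotonicity on $(-\infty,0)$, finiteness of the blow-up, and the two limits in \eqref{3226} --- and handle them by combining the FBSDE--Riccati correspondence with a monotonicity analysis of the right-hand side of \eqref{3616} in the parameters $K$ and $\varrho$. For $\varrho=1$, Assumption (H3) is the monotonicity \eqref{mono}, so Theorem \ref{thm21} gives global well-posedness of the unperturbed FBSDE \eqref{12614} for every initial datum, whence Lemma \ref{lemma3.2} (applied in its converse direction via the linear feedback $y_t=K_t x_t$ read off from the well-posed FBSDE) produces $K_t(1)$ on all of $[0,T]$. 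For $\varrho\in(0,1]$, I would reinterpret \eqref{3725} as the Hamilton--Jacobi--Bellman equation of the associated linear-quadratic regime-switching control problem; its value function $V(t,x)=\tfrac{1}{2}\langle K_t(\varrho)x,x\rangle$ is finite by the convexity of the running cost in $(y,z,\theta)$ inherited from \eqref{mono}, giving a pointwise bound $K_t(\varrho)\leq C_\varrho I_n<\infty$ on $[0,T]$. For $\varrho=0$ the Riccati is a linear matrix ODE solvable on all of $\mathbb R$, so $t^K_\varrho=-\infty$ throughout $[0,1]$.

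\textbf{Comparison on $(-\infty,0)$.} Differentiating the right-hand side of \eqref{3616} gives
\begin{equation*}
\partial_\varrho f_\varrho(K)=KH_{22}(t)K+2\varrho\bigl\{[KH_{23}+H_{13}]F_0(K)[H_{31}+H_{32}K]+[KH_{24}+H_{14}]F_1(K)[H_{41}+H_{42}K]\bigr\}.
\end{equation*}
For $K\in\mathbb S^n_+$ and $\varrho<0$, the first summand is Loewner-non-positive because $H_{22}\leq -\beta I_n$. Setting $A:=KH_{23}+H_{13}$ and using $H_{kl}^\top=H_{lk}$ with $K^\top=K$ yields $A^\top=H_{31}+H_{32}K$, so the first bracket equals $AF_0(K)A^\top\geq 0$ by Remark \ref{rem31}(ii); the analogous identity holds for the $F_1$-term. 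Multiplication by $2\varrho<0$ renders each non-positive, hence $\partial_\varrho f_\varrho(K)\leq 0$ throughout $\mathbb S^n_+$. Integrating in $\varrho$ and applying a standard matrix-Riccati comparison backwards from the common terminal $K_T=0$ gives $K_t(\varrho_1)\geq K_t(\varrho_2)$ whenever $\varrho_1<\varrho_2<0$ on the common interval of existence, so $t^K_{\varrho_1}\geq t^K_{\varrho_2}$, proving $\varrho\mapsto t^K_\varrho$ is decreasing.

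\textbf{Finiteness and the limits.} Since $-\dot K_t|_{K=0}=H_{11}(t)\geq \beta I_n>0$, $K_t(\varrho)$ grows strictly off zero just below $T$. Once $K_t\geq \epsilon I_n$ on some $[t_0,T)$ and $\varrho<0$, the non-negative contribution $\varrho^2 AF_0(K)A^\top$ dominates and yields a Loewner bound $-\dot K_t\geq c\varrho^2 K_t^2$ with $c>0$ depending on the coefficients. Tracing the scalar quantity $\operatorname{tr}(K_t)$ and using $\operatorname{tr}(K_t^2)\geq \operatorname{tr}(K_t)^2/n$ reduces the question to a scalar Bernoulli ODE that explodes in finite time, proving $-\infty<t^K_\varrho<T$. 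For $\varrho\searrow -\infty$, the rescaling $K_t=|\varrho|^{-2}J_{|\varrho|^2(T-t)}$ in this Bernoulli estimate shows $T-t^K_\varrho=O(|\varrho|^{-2})$, so $t^K_\varrho\to T$. For $\varrho\nearrow 0^-$, the limit Riccati is linear, $-\dot K=KH_{21}+H_{12}K+H_{11}$ with $K_T=0$, and solvable on all of $\mathbb R$; continuous dependence of \eqref{3616} on $\varrho$ (uniform on compacta in $K$, using Remark \ref{rem31}(i) to keep $(I_n-KH_{33})^{-1}$ and $(I_n-KH_{44})^{-1}$ under control) extends the existence interval of $K_t(\varrho)$ to any prescribed $[T_0,T]$ once $|\varrho|$ is small enough, giving $t^K_\varrho\to-\infty$.

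\textbf{Main obstacle.} The sensitive step is the pointwise upper bound on $K_t(\varrho)$ for $\varrho\in(0,1]$: because $\partial_\varrho f_\varrho$ has mixed sign on $[0,1]$ (the first summand is $\leq 0$ while the second is $\geq 0$), a direct matrix-Riccati $\varrho$-comparison with the bounded $K_t(1)$ is unavailable. The LQ value-function interpretation is the cleanest route; alternatively, the dual Hamiltonian system of Section \ref{3} furnishes a back-up, since a uniform positive lower bound on the dual Riccati $\tilde K_t$ would translate into an upper bound on $K_t$ via the relation $K_t=\tilde K_t^{-1}$ produced by the Legendre transformation. All other pieces --- the $\varrho$-monotonicity, the Bernoulli blow-up estimate, and the two limits --- are routine matrix-Riccati and ODE-continuity arguments.
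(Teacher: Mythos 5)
The main gap is exactly the step you flag as your ``main obstacle'': the non-explosion of $K_t(\varrho)$ for $\varrho\in(0,1)$ is not actually proved. Your LQ value-function route rests on the assertion that convexity of the running cost for the \emph{perturbed} system is ``inherited from \eqref{mono}'', but \eqref{mono} is a condition on $H$ alone (the case $\varrho=1$), and verifying coercivity of the perturbed problem for $\varrho\in(0,1)$ is precisely the point at issue; likewise the duality back-up (a lower bound on $\tilde K_t$) is only named, not established. The paper closes this with one elementary observation that your proposal misses: for $\varrho\in[0,1]$ one has $\varrho^2\leq\varrho$, so by Remark \ref{rem31}(ii) and the inequality \eqref{377},
\begin{equation*}
\varrho H_{22}+\varrho^2 H_{23}F_0(K)H_{32}+\varrho^2 H_{24}F_1(K)H_{42}
\;\leq\;\varrho\bigl[H_{22}-H_{23}H_{33}^{-1}H_{32}-H_{24}H_{44}^{-1}H_{42}\bigr]\;\leq\;0 ,
\end{equation*}
i.e.\ the entire quadratic-in-$K$ coefficient in \eqref{3725} is Loewner non-positive on $\mathbb S^n_+$. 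Hence $0\leq K_t(\varrho)$ is dominated, via the comparison theorem, by the solution of the \emph{linear} scalar ODE \eqref{3728}, which exists globally; no control-theoretic interpretation is needed. Without this (or an equivalent) inequality your argument for $\varrho\in(0,1]$ does not go through.

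Two smaller points. First, your finite-time blow-up estimate for $\varrho<0$ claims that the term $\varrho^2 AF_0(K)A^{\top}$ ``dominates'' and yields $-\dot K_t\geq c\varrho^2K_t^2$; but that term can vanish identically (e.g.\ $H_{13}=H_{23}=H_{14}=H_{24}=0$), and the blow-up is in general driven by $\varrho KH_{22}K\geq\beta|\varrho|K^2$, giving a rate $T-t^K_\varrho=O(|\varrho|^{-1})$ rather than $O(|\varrho|^{-2})$. The qualitative conclusions (finiteness and $t^K_\varrho\to T$) survive, and the linear terms $KH_{21}+H_{12}K$ still need to be absorbed, which the paper does by an exponential conjugation before comparing with a scalar Riccati equation. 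Second, your monotonicity argument via the explicit sign of $\partial_\varrho f_\varrho(K)$ on $\mathbb S^n_+$ for $\varrho<0$ is correct and is in fact a cleaner justification of the comparison the paper invokes; the continuous-dependence argument for $\lim_{\varrho\nearrow0}t^K_\varrho=-\infty$ is also essentially the paper's (the paper routes it through a dominating equation $K_5$ with $F_0,F_1$ replaced by $-H_{33}^{-1},-H_{44}^{-1}$ so that the $\varrho=0$ limit is a linear ODE).
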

\begin{proof}
  For the case $\varrho\in [0,1]$. Noticing the inequality \eqref{377} and Remark \ref{rem31}, we obtain
  \begin{equation*}
    \begin{aligned}
      &\varrho H_{22}(t)+\varrho^2 H_{23}(t)F_0(K_t)H_{32}(t)+\varrho^2 H_{24}(t)F_0(K_t)H_{42}(t)\\
      &\leq \varrho H_{22}(t)+\varrho^2 H_{23}(t)(-H_{33}^{-1}(t))H_{32}(t)+\varrho^2 H_{24}(t)(-H_{44}^{-1}(t))H_{42}(t)\\
      &\leq 0.
    \end{aligned}
  \end{equation*}
  Then we introduce the following one-dimensional Riccati equation,
  \begin{equation}\label{3728}
    \left\{
    \begin{aligned}
      &-\dot k_{1,t}=2\Big(\|H_{12}\|_{\infty}+\frac{\varrho^2}{\delta}\|H_{13}\|_{\infty}\|H_{23}\|_{\infty} +\frac{\varrho^2}{\delta}\|H_{14}\|_{\infty}\|H_{24}\|_{\infty}\Big)k_{1,t}\\
      &\qquad\qquad +\|H_{11}\|_{\infty}+\frac{\varrho^2}{\delta}\|H_{13}\|^2_{\infty} +\frac{\varrho^2}{\delta}\|H_{14}\|^2_{\infty},\\
      &k_{1,T}=0.
    \end{aligned}
    \right.
  \end{equation}
  It is easy to see that Riccati equation \eqref{3728} admits a unique solution $K_{1,t}(\varrho)\geq 0$. In fact,
  \begin{equation*}
    \begin{aligned}
      k_{1,t}(\varrho)&=\frac{\|H_{11}\|_{\infty}+\frac{\varrho^2}{\delta}\|H_{13}\|^2_{\infty} +\frac{\varrho^2}{\delta}\|H_{14}\|^2_{\infty}}{2 (\|H_{12}\|_{\infty}+\frac{\varrho^2}{\delta}\|H_{13}\|_{\infty}\|H_{23}\|_{\infty} +\frac{\varrho^2}{\delta}\|H_{14}\|_{\infty}\|H_{24}\|_{\infty} )}\\
      &\times \bigg[\exp\Big\{2\Big(\|H_{12}\|_{\infty}+\frac{\varrho^2}{\delta}\|H_{13}\|_{\infty}\|H_{23}\|_{\infty} +\frac{\varrho^2}{\delta}\|H_{14}\|_{\infty}\|H_{24}\|_{\infty}\Big)(T-t)\Big\}-1\bigg],
    \end{aligned}
  \end{equation*}
  which implies that the blow-up time of $k_{1}$ is $t^{k_1}_{\varrho }=-\infty$. We also can obtain the same conclusion of $K_1=k_1I_n$. By virtue of the comparison theorem (\cite[Lemma 8.2]{P2000}), we have $0\leq K_t(\varrho)\leq K_{1,t}(\varrho)$, $t\in [0,T]$. Hence, the first desired result can be obtained.

  For the case $\varrho<0$. Let $K_{2,\cdot}(\varrho)$ be the solution to the following Riccati equation
  \begin{equation*}
    \left\{
    \begin{aligned}
      &-\dot{K}_{2,t}=K_{2,t}H_{21}(t)+H_{12}(t)K_{2,t}+H_{11}(t)+\varrho K_{2,t}H_{22}(t)K_{2,t},\\
      &K_{2,T}=0.
    \end{aligned}
    \right.
  \end{equation*}
  Since $F_0(K_t)\geq 0$, $F_1(K_t)\geq 0$, $H_{11}(t)\geq \beta I_n$ and $H_{22}(t)<-\beta I_n$, $t\in [0,T]$, we obtain $K_t(\varrho)\geq K_{1,t}(\varrho)$ by comparison theorem. If we can prove that the blow-up time of $K_{2,t}(\varrho)$ satisfies $\lim_{\varrho\searrow -\infty}t_{\varrho}^{K_2}=T$, we directly obtain the desired result. In fact, the blow-up $t_{\varrho}^{K_2}$ as $\varrho\searrow -\infty$ is same as that of $t_{\varrho}^{K_3}$, which is the blow-up time of $K_{3,t}(\varrho)=e^{\int_0^tH_{12}(s)\mathrm{d}s}K_{2,t}e^{ \int_0^t H_{21} (s)\mathrm{d}s}$. By some calculation, we have
  \begin{equation*}
    \left\{
    \begin{aligned}
      &-\dot{K}_{3,t}=e^{\int_0^tH_{12}(s)\mathrm{d}s}H_{11}(t)e^{\int_0^tH_{21}(s)\mathrm{d}s}+\varrho K_{3,t}e^{-\int_0^tH_{12}(s)\mathrm{d}s}H_{22}(t)e^{-\int_0^tH_{21}(s)\mathrm{d}s}K_{3,t},\\
      &K_{3,T}=0.
    \end{aligned}
    \right.
  \end{equation*}
  Moreover, there exist two constants $\beta_2,\beta_3>0$, such that, for all $t\in [0,T]$, $e^{\int_0^tH_{12}(s)\mathrm{d}s}H_{11}(t)e^{\int_0^tH_{21}(s)\mathrm{d}s}\geq \beta_2I_n$ and $e^{-\int_0^tH_{12}(s)\mathrm{d}s}H_{22}(t)e^{-\int_0^tH_{21}(s)\mathrm{d}s}\geq -\beta_3 I_n$. Let $-\dot{k}_{4,t}=\beta_2-\varrho\beta_3 K_{4,t}^2$ with $k_{t,T}=0$, whose blow-up time $t_{\varrho}^{k_4}$ satisfies $\lim_{\varrho\searrow -\infty}t_{\varrho}^{k_4}=T$. By virtue of comparison theorem, we get $K_{3,t}\geq K_{4,t}=k_{4,t}I_n$. Then we obtain the first limitation.

  In addition, let $K_{5,t}$ be the solution of
  \begin{equation}\label{3729}
    \left\{
    \begin{aligned}
      &-\dot{K}_{5,t}=K_{5,t}H_{21}(t)+H_{12}(t)K_{5,t}+H_{11}(t)+\varrho K_{5,t}H_{22}(t)K_{5,t}\\
      &\qquad\qquad -\varrho^2[H_{13}(t)+K_{5,t}H_{23}(t)]H_{33}^{-1}(t)[H_{32}(t)K_{5,t}+H_{31}(t)]\\
      &\qquad\qquad-\varrho^2[H_{14}(t)+K_{5,t}H_{24}(t)]H_{44}^{-1}(t)[H_{42}(t)K_{5,t}+H_{41}(t)],\\
      &K_{5,T}=0.
    \end{aligned}
    \right.
  \end{equation}
  By Remark \ref{rem31} and comparison theorem, we have $K_{t}(\varrho)\leq K_{5,t}(\varrho)$. In fact, when $\varrho=0$, Riccati equation \eqref{3729} becomes a linear ODE and the blow-up time $t_{\varrho}^{K_5}=-\infty$. Since the left-hand side of ODE \eqref{3729} is locally Lipschitz, it follows that, for an arbitrarily large interval $(\tilde t,T]$, $\tilde t\in (-\infty,T)$, there exists a sufficiently small $\epsilon>0 $, such that $K_{5,t}(\varrho)$, is uniformly bounded in $[\tilde t,T]$, for $\varrho\in [-\epsilon,\epsilon]$, and that $\lim_{\varrho\nearrow 0}\sup_{t\in [\tilde t, T]}\|K_{t}(\varrho)-K_t(0)\|=0$. Hence, the blow-up time $t_{\varrho}^{K_5}$ satisfies $\lim_{\varrho\nearrow 0}t_{\varrho}^{K_5}=T$, which implies the second limitation in \eqref{3226}.

  Applying comparison theorem again, we obtain
  \begin{equation*}
    0\leq K_t(\varrho_1)\leq K_{t}(\varrho_2),\qquad -\infty<\varrho_2<\varrho_1<0,\qquad t\in (t_{\varrho_2},T].
  \end{equation*}
  Thus $t_{\varrho}$ is a decreasing function of $\varrho$. Then the proof is complete.
\end{proof}
Recalling the dual Hamiltonian system \eqref{3913}, which can be rewritten as follows.
\begin{equation}
  \left\{
  \begin{aligned}
    &\mathrm{d}\tilde x_t=[\tilde H_{21}(t)\tilde x_t+\tilde H_{22}(t)\tilde y_t+\tilde H_{23}(t) \tilde z_t+\tilde H_{24}(t)\tilde \theta_{t}]\mathrm{d}t\\
    &\qquad\quad+[\tilde H_{31}(t)\tilde x_t+\tilde H_{32}(t)\tilde y_t+\tilde H_{33}(t) \tilde z_t]\mathrm{d}B_t\\
    &\qquad\quad +[\tilde H_{41}(t)\tilde x_{t-}+\tilde H_{42}(t)\tilde y_{t-} +\tilde H_{44}(t)\tilde \theta_{t}]\mathrm{d}\tilde V_t,\\
    &\mathrm{d}\tilde y_t=-[\tilde H_{11}(t)\tilde x_t+\tilde H_{12}(t)\tilde y_t+\tilde H_{13}(t) \tilde z_t+\tilde H_{14}(t)\tilde \theta_{t}]\mathrm{d}t+\tilde z_t\mathrm{d}B_t+\tilde\theta_{t}\mathrm{d}\tilde V_t,\\
    &\tilde x_0=0,\qquad \tilde y_T=0,
  \end{aligned}
  \right.
\end{equation}
where
\begin{equation}\label{3831}
  \begin{aligned}
    \tilde H=\left(\begin{matrix}
      \tilde H_{11}&\tilde H_{12}& -\varrho H_{23}H_{33}^{-1}&-\varrho H_{24}H_{44}^{-1}\\
      \tilde H_{21}&\tilde H_{22}&-\varrho H_{13}H_{33}^{-1}&-\varrho H_{14}H_{44}^{-1}\\
      -\varrho H_{33}^{-1}H_{32}&-\varrho H_{33}^{-1}H_{31}&H_{33}^{-1}&0\\
      -\varrho H_{44}^{-1}H_{42}&-\varrho H_{44}^{-1}H_{41}&0&H_{44}^{-1}
    \end{matrix}\right).
  \end{aligned}
\end{equation}
with $\tilde H_{11}=\varrho^2H_{23}H_{33}^{-1}H_{32}+\varrho^2H_{24}H_{44}^{-1}H_{42}-\varrho H_{22}$, $\tilde H_{12}=\varrho^2H_{23}H_{33}^{-1}H_{31} +\varrho^2H_{24}H_{44}^{-1}H_{41}-H_{21}$, $\tilde H_{21}=\varrho^2H_{13}H_{33}^{-1}H_{32}+\varrho^2H_{14}H_{44}^{-1}H_{42}-H_{12}$, $\tilde H_{22} =\varrho^2H_{13}H_{33}^{-1}H_{31}+\varrho^2H_{14}H_{44}^{-1}H_{41}-H_{11}$. Consequently, the dual Riccati equation \eqref{3323} becomes
 \begin{equation}\label{3830}
  \begin{aligned}
    -\dot{\tilde K}_t&=\tilde K_t\tilde H_{21}(t)+\tilde H_{12}(t)\tilde K_t+\tilde H_{11}(t)+\tilde K_t\tilde H_{22}(t)\tilde K_t\\
    &\qquad+[\tilde K_t\tilde H_{23}(t)+\tilde H_{13}(t)]\tilde F_0(\tilde K_t)[\tilde H_{31}(t)+\tilde H_{32}(t) \tilde K_t]\\
    &\qquad+[\tilde K_t\tilde H_{24}(t)+\tilde H_{14}(t)]\tilde F_1(\tilde K_t)[\tilde H_{41}(t)+\tilde H_{42}(t) \tilde K_t].
  \end{aligned}
\end{equation}
We define the blow-up time of $\tilde K_t$ as follows.
\begin{equation*}
   t^{\tilde K}_{\varrho}=\sup\big\{t_0|t_0<T,\lim_{t\searrow t_0}\|\tilde K_{t}(\varrho)\|=\infty\big\}.
\end{equation*}
From the Legendre transformation in Section \ref{3}, we know $y_t=K_tx_t$, $\tilde y_t=\tilde K_t\tilde x_t$ and $(\tilde x,\tilde y)=(y,x)$. Furthermore, if the inverse of $K$ exists, then $\tilde K=K^{-1}$. Hence, we obtain the following relationship of the blow-up time between $K$ and $\tilde K$.
\begin{equation*}\begin{aligned}
  &t^K_{\varrho}=\sup\big\{t_0|t_0<T, \lim_{t\searrow t_0}\|\tilde K_t(\varrho)\|=0\big\},\\
  & t^{\tilde K}_{\varrho}=\sup\big\{t_0|t_0<T, \lim_{t\searrow t_0}\| K_t(\varrho)\|=0\big\}.
\end{aligned}\end{equation*}

Base on Lemma \ref{lemma41}, we have the following results by a similar method of \cite[Lemma 5.2-Lemma 5.3]{P2000}.
\begin{lemma}\label{lemma42}
  Under Assumption (H3), the blow-up time $t_{\varrho}$ depends continuously with respect to $\varrho\in (-\infty,0)$, which is a strictly decreasing function of $\varrho$.
\end{lemma}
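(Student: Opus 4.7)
The plan is to handle the strict monotonicity first and then the one-sided continuities, in both cases working directly from the Riccati equation \eqref{3725} together with standard ODE comparison and continuous-dependence theorems.

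For strict monotonicity I would refine the comparison argument appearing at the end of the proof of Lemma \ref{lemma41}. Fix $\varrho_1<\varrho_2<0$, subtract the two instances of \eqref{3725}, and split the nonlinear terms as $\varrho_1 K_1 H_{22} K_1-\varrho_2 K_2 H_{22} K_2=\varrho_1(K_1 H_{22}K_1-K_2 H_{22}K_2)+(\varrho_1-\varrho_2)K_2 H_{22} K_2$ and similarly for the $\varrho^2$ block. On the common interval $(t^K_{\varrho_1},T)$, positivity of $K_t(\varrho_2)$ combined with $H_{22}(t)\leq -\beta I_n$ makes $(\varrho_1-\varrho_2)K_t(\varrho_2)H_{22}(t)K_t(\varrho_2)$ positive semidefinite and nonzero, and $\varrho_1^2-\varrho_2^2>0$ multiplies a symmetric nonnegative bracket built from $F_0$ and $F_1$. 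The strict Riccati comparison then gives $K_t(\varrho_1)>K_t(\varrho_2)$ on $(t^K_{\varrho_1},T)$. If $t^K_{\varrho_1}=t^K_{\varrho_2}$, then $K_t(\varrho_2)$ would stay bounded in a right neighbourhood of this common time while $K_t(\varrho_1)$ is required to blow up there, contradicting $K_t(\varrho_1)\geq K_t(\varrho_2)$; hence $t^K_{\varrho_1}>t^K_{\varrho_2}$.

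For continuity I invoke the classical theorem on continuous dependence of ODE solutions on parameters, applied to the smooth, locally Lipschitz right-hand side of \eqref{3725}. Left-continuity at $\varrho_0\in(-\infty,0)$: given $\varrho_n\uparrow \varrho_0$, Lemma \ref{lemma41} and the strict monotonicity just proved yield $t^K_{\varrho_n}\searrow t^\ast\geq t^K_{\varrho_0}$. Assuming $t^\ast>t^K_{\varrho_0}$, pick $t_1\in (t^K_{\varrho_0},t^\ast)$; on $[t_1,T]$ the trajectory $K_\cdot(\varrho_0)$ is bounded, and continuous dependence produces $K_\cdot(\varrho_n)$ on $[t_1,T]$ for all $n$ large, forcing $t^K_{\varrho_n}\leq t_1<t^\ast$, a contradiction.

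Right-continuity at $\varrho_0$ is the main obstacle, because for $\varrho_n\downarrow \varrho_0$ with hypothetical $t^K_{\varrho_n}\uparrow t^\ast<t^K_{\varrho_0}$, on the interval $(t^\ast,t^K_{\varrho_0}]$ the solutions $K_\cdot(\varrho_n)$ are all defined while $K_\cdot(\varrho_0)$ is not, so no direct pointwise comparison applies there. My plan is to pivot at $t_\varepsilon:=t^K_{\varrho_0}+\varepsilon$ for small $\varepsilon>0$: continuous dependence on $[t_\varepsilon,T]$ gives $K_{t_\varepsilon}(\varrho_n)\to K_{t_\varepsilon}(\varrho_0)$, a finite limit. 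Integrating \eqref{3725} backward from $t_\varepsilon$ with this initial datum and parameter $\varrho_n$, and invoking continuity of the Riccati blow-up time with respect to initial data and parameter (itself established by majorisation with the scalar Riccati equations used in the proof of Lemma \ref{lemma41}), the blow-up time of this flow, which equals $t^K_{\varrho_n}$, lies within $\varepsilon$ of $t^K_{\varrho_0}$ for all $n$ large. Since $\varepsilon$ is arbitrary, $t^\ast=t^K_{\varrho_0}$, completing right-continuity and hence the proof.
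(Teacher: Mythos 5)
The paper offers no argument here at all---it invokes Peng's Lemmas 5.2--5.3, whose mechanism is the dual Riccati equation: $\tilde K_t(\varrho)=K_t(\varrho)^{-1}$ solves \eqref{3830}, extends continuously past $t^K_{\varrho}$, and $t^K_{\varrho}$ is recharacterised as the first backward time at which $\tilde K_t(\varrho)$ degenerates (Lemma \ref{lemma43}). You work directly with $K$ instead, and two steps do not hold up. The first is the strictness of the monotonicity. If $t^K_{\varrho_1}=t^K_{\varrho_2}=\tau$, then by the very definition of blow-up time \emph{both} solutions satisfy $\lim_{t\searrow\tau}\|K_t(\varrho_i)\|=\infty$; neither ``stays bounded in a right neighbourhood'' of $\tau$, and in any case ``$K(\varrho_1)$ blows up while $K(\varrho_2)$ stays bounded'' is perfectly compatible with $K(\varrho_1)\geq K(\varrho_2)$, so the claimed contradiction does not exist. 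A strict pointwise inequality $K_t(\varrho_1)>K_t(\varrho_2)$ on the common interval does not by itself separate blow-up times (compare $1/(t-\tau)$ and $2/(t-\tau)$). The standard repair is precisely the dual picture: exhibit a common kernel vector of $\tilde K_\tau(\varrho_1)$ and $\tilde K_\tau(\varrho_2)$ and exploit a strict comparison for the dual equation together with the transversality of the zero-crossing of $\langle \tilde K_t(\varrho)v,v\rangle$.

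The second gap is that your right-continuity argument is circular where it matters: ``continuity of the Riccati blow-up time with respect to initial data and parameter'' is essentially the statement being proved, and the scalar majorisations used in Lemma \ref{lemma41} do not deliver it for matrices. They would require $K_{t_\varepsilon}(\varrho_n)\geq \lambda I_n$ with $\lambda$ large, whereas for $n>1$ blow-up of $K$ can occur through a single direction (one eigenvalue of $K^{-1}$ reaching zero) while $\lambda_{\min}(K)$ stays bounded, so no such uniform lower bound is available. The pivot idea is salvageable, but you must replace the cited majorisation by a directional estimate: choose a unit vector $v_\varepsilon$ with $\langle K_{t_\varepsilon}(\varrho_0)v_\varepsilon,v_\varepsilon\rangle$ large and show, using $\varrho<0$ and $H_{22}(t)\leq-\beta I_n$, that $\phi(t)=\langle K_t(\varrho_n)v_\varepsilon,v_\varepsilon\rangle$ satisfies $-\dot\phi\geq c\,\phi^2-C$, which forces blow-up within backward time of order $1/\phi(t_\varepsilon)$; alternatively, follow the paper and read continuity of $t^K_\varrho$ off the joint continuity of $(t,\varrho)\mapsto\tilde K_t(\varrho)$ and the behaviour of $\tilde K$ at its degeneracy. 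The non-strict monotonicity via comparison and your left-continuity argument are fine.
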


\begin{lemma}\label{lemma43}
   Suppose that Assumption (H3) holds. The solution of Riccati equation \eqref{3725} is continuous on $t\in (t_{\varrho}^K,T]$. Its inverse $\tilde K_t=K_t^{-1}$ is continuous on $t\in [t_{\varrho}^K,T)$ and satisfies the dual Riccati equation \eqref{3830}. Moreover, the matrix defined by $\check K_0=\lim_{t\searrow t_{\varrho}^K}K^{-1}_t=\tilde K_{t_{\varrho}}\geq 0$ is degenerate.
\end{lemma}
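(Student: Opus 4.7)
The plan is to mirror the three-step argument developed in \cite{P2000,WW2017}, adapting it to the regime-switching Hamiltonian so that the Brownian correction $F_0(K_t)$ and the Markov-chain correction $F_1(K_t)$ are handled in parallel. The continuity of $K_t$ on $(t_\varrho^K,T]$ is immediate from standard ODE theory applied to \eqref{3725}: on the set $\{K\ge 0\}$ both $I_n-KH_{33}(t)$ and $I_n-KH_{44}(t)$ are invertible because $H_{33},H_{44}\ll 0$, so the right-hand side is locally Lipschitz, and the definition of $t_\varrho^K$ as the blow-up time gives continuity throughout $(t_\varrho^K,T]$. Moreover, $K_T=0$ combined with $-\dot K_T=H_{11}(T)\gg 0$ yields $K_t>0$ strictly for $t$ slightly below $T$.

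Next, I would verify by direct differentiation that $\tilde K_t:=K_t^{-1}$ satisfies the dual Riccati equation \eqref{3830} on any sub-interval of $(t_\varrho^K,T]$ on which $K_t$ is invertible. Using $\frac{d}{dt}K_t^{-1}=-K_t^{-1}\dot K_t K_t^{-1}$, substituting \eqref{3725}, and rewriting each resulting term via the dual coefficients \eqref{3831} together with the identities $F_0(K_t)=K_t(I_n-H_{33}K_t)^{-1}$ and $F_1(K_t)=K_t(I_n-H_{44}K_t)^{-1}$, the Brownian correction $-K_t^{-1}[K_tH_{23}+H_{13}]F_0(K_t)[H_{31}+H_{32}K_t]K_t^{-1}$ rearranges into $[\tilde K_t\tilde H_{23}+\tilde H_{13}]\tilde F_0(\tilde K_t)[\tilde H_{31}+\tilde H_{32}\tilde K_t]$ after unpacking $\tilde H_{13}=-H_{23}H_{33}^{-1}$, etc.; the jump correction is treated identically. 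Since the right-hand side of \eqref{3830} is locally Lipschitz on $\{\tilde K\ge 0\}$, the inverse $\tilde K_t$ is well-defined and smooth throughout the connected interval $(t_\varrho^K,T)$, which in turn shows that $K_t$ is strictly positive definite on the whole of $(t_\varrho^K,T)$.

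For the continuity at the left endpoint $t_\varrho^K$ and the degeneracy of $\check K_0$, I observe that on $(t_\varrho^K,T)$ the function $\tilde K_t\ge 0$ satisfies \eqref{3830}, which shares the structural form of the original Riccati equation. A comparison argument analogous to the upper-bound step in the proof of Lemma \ref{lemma41} (dominating $\tilde K_t$ from above by the solution of a scalar Riccati equation built from the sup-norms of the $\tilde H_{ij}$) shows that $\tilde K_t$ does not itself blow up at $t_\varrho^K$. This boundedness, together with the dual ODE, yields existence of the limit $\check K_0:=\lim_{t\searrow t_\varrho^K}\tilde K_t\ge 0$. Finally, if $\check K_0$ were invertible, then the local existence theorem for \eqref{3830} with initial datum $\check K_0$ at $t_\varrho^K$ would extend $\tilde K_t$ continuously to a slightly larger interval, and inverting would extend $K_t=\tilde K_t^{-1}$ past $t_\varrho^K$, contradicting the definition of $t_\varrho^K$ as a blow-up time; hence $\det\check K_0=0$.

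The main obstacle I anticipate is the explicit algebraic identification of $-K_t^{-1}\dot K_t K_t^{-1}$ with the right-hand side of \eqref{3830}: the regime-switching system carries two separate correction blocks $F_0$, $F_1$, each of which must be converted via \eqref{3831} into its dual $\tilde F_0$, $\tilde F_1$, and the quadratic terms must be correctly re-aggregated. The bookkeeping is notationally heavy but is conceptually a routine verification. A second, subtler point is the comparison estimate ruling out simultaneous blow-up of $K_t$ and $\tilde K_t$ at $t_\varrho^K$, which is the crucial input for the convergence of $\tilde K_t$ to a well-defined (and necessarily degenerate) limit.
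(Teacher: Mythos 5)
Your argument is correct and follows essentially the same route as the paper, which gives no proof of this lemma and simply defers to the method of Peng's Lemmas 5.2--5.3 --- precisely the inversion-plus-dual-Riccati scheme you outline (continuity from local Lipschitzness on $\{K\geq 0\}$, verification that $\tilde K_t=K_t^{-1}$ solves \eqref{3830}, an a priori bound on $\tilde K_t$ ruling out its blow-up at $t_{\varrho}^K$, and degeneracy of the limit by a continuation/contradiction argument). The one step worth making explicit is the comparison bound: a literal sup-norm domination of the $\tilde H_{kl}$ would produce a scalar Riccati equation with \emph{positive} quadratic coefficient, which itself explodes; what saves the argument is the sign cancellation $\tilde H_{22}+\tilde H_{23}(-H_{33})\tilde H_{32}+\tilde H_{24}(-H_{44})\tilde H_{42}=-H_{11}\leq -\beta I_n$ after bounding $\tilde F_0(\tilde K)\leq -\tilde H_{33}^{-1}$ and $\tilde F_1(\tilde K)\leq -\tilde H_{44}^{-1}$, which is the same structural trick used in the upper-bound step of Lemma \ref{lemma41} that you correctly point to.
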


Next we present one of the main results of this paper.
\begin{theorem}\label{theorem44}
  Let Assumption (H3) hold. Then there exists a $\rho_1>0$, the smallest eigenvalue. The dimension of the space of the eigenfunctions corresponding to $\rho_1$ is less than $n$.
\end{theorem}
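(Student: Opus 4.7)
Lemmas~\ref{lemma41} and \ref{lemma42} together assert that the blow-up time $t_\varrho^K$ of the Riccati equation \eqref{3725} equals $-\infty$ for $\varrho \in [0,1]$ and is a continuous, strictly decreasing function of $\varrho \in (-\infty,0)$ whose range is $(-\infty,T)$ by \eqref{3226}. The intermediate value theorem produces a unique $\varrho_1 \in (-\infty,0)$ with $t_{\varrho_1}^K = 0$, and I would set $\rho_1 := 1 - \varrho_1 > 1 > 0$. For any $\rho < \rho_1$ (equivalently $\varrho = 1 - \rho > \varrho_1$), the same lemmas give $t_\varrho^K < 0$ (or $-\infty$), so $K_\cdot(\varrho) \geq 0$ is defined and continuous on all of $[0,T]$. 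Since $H_{33}(t), H_{44}(t) \leq -\beta I_n$, the eigenvalues of $K_t H_{33}(t)$ and $K_t H_{44}(t)$ are non-positive, hence $I_n - K_t H_{33}(t)$ and $I_n - K_t H_{44}(t)$ are invertible and condition \eqref{assum} holds. The uniqueness clause of Lemma~\ref{lemma3.2} with $\bar K = 0$ then forces the only solution of \eqref{3825} with $x_0 = 0, y_T = 0$ to be trivial, so $\rho$ is not an eigenvalue.

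\textbf{Step 2 (eigenfunctions at $\rho_1$).} At $\varrho = \varrho_1$, Lemma~\ref{lemma43} gives $\tilde K_t := K_t^{-1}$ continuous on $[0,T)$, satisfying the dual Riccati \eqref{3830}, with $\tilde K_0 \geq 0$ degenerate; hence $\ker \tilde K_0$ is nontrivial. Fix a nonzero $\eta \in \ker \tilde K_0$ and, for each small $\epsilon > 0$, apply Lemma~\ref{lemma3.2} on $[\epsilon,T]$ with terminal $\bar K = 0$ and initial value $X^\epsilon_\epsilon := \tilde K_\epsilon \eta$. This yields a solution $(x^\epsilon, y^\epsilon, z^\epsilon, \theta^\epsilon) = (X^\epsilon, K X^\epsilon, L X^\epsilon_-, P X^\epsilon_-)$ of \eqref{3825} on $[\epsilon,T]$ with $y^\epsilon_\epsilon = K_\epsilon \tilde K_\epsilon \eta = \eta$ (independent of $\epsilon$) and $x^\epsilon_\epsilon = \tilde K_\epsilon \eta \to \tilde K_0 \eta = 0$. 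Using continuity of $\tilde K_\cdot$ at $0$ together with uniform-in-$\epsilon$ a priori estimates on the Hamiltonian FBSDE \eqref{3825} (whose coefficients are bounded on $[0,T]$), I would pass to the limit $\epsilon \to 0$ and extract a nontrivial quadruple on $[0,T]$ satisfying $x_0 = 0, y_T = 0, y_0 = \eta$, making $\rho_1$ an eigenvalue. By linearity of the construction in $\eta$, the eigenfunction subspace has dimension at most $\dim \ker \tilde K_0$.

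\textbf{Step 3 (strict dimension bound, the main obstacle).} The remaining task is to upgrade $\dim \ker \tilde K_0 \leq n$ to the strict bound $\dim \ker \tilde K_0 < n$, equivalently, to show $\tilde K_0 \neq 0$; this is the main technical difficulty of the theorem. I would approach it by examining \eqref{3830} at $\tilde K = 0$: since $\tilde F_0(0) = \tilde F_1(0) = 0$, the dual Riccati reduces to $-\dot{\tilde K}_t = \tilde H_{11}(t)$ on $\{\tilde K_t = 0\}$, with $\tilde H_{11} = \varrho_1^2 H_{23} H_{33}^{-1} H_{32} + \varrho_1^2 H_{24} H_{44}^{-1} H_{42} - \varrho_1 H_{22}$. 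Combining $\varrho_1 < 0$ with the strict coercivity in \eqref{mono}--\eqref{377}, one should be able to derive a definite sign for $\tilde H_{11}(0)$ incompatible with $\tilde K_t \geq 0$ emerging from $\tilde K_0 = 0$ while simultaneously blowing up at $T$. An alternative route is a contradiction: if the eigenfunction subspace had dimension $n$, one could assemble $n$ independent eigenfunctions into a matrix-valued process whose columns span $\mathbb{R}^n$ at some interior time, leading to invertibility conflicting with the blow-up of $K_t$ at $0$.
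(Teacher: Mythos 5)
Your Step 1 coincides with the paper's argument (locate the unique $\varrho_1<0$ with $t^{K}_{\varrho_1}=0$, set $\rho_1=1-\varrho_1$, and exclude smaller eigenvalues via boundedness of $K_\cdot(\varrho)$ and the uniqueness part of Lemma~\ref{lemma3.2}). The genuine gap is in Step 2. At $\rho=\rho_1$ the monotonicity condition is precisely what degenerates (the homogeneous system is about to admit nontrivial solutions), so there is no uniform a priori estimate for the coupled FBSDE \eqref{3825} in terms of its boundary data, and your parenthetical ``whose coefficients are bounded on $[0,T]$'' does not apply to the equations you actually have to control: in the decoupled representation the SDE \eqref{22519} has coefficients $L_t=F_0(K_t)[H_{31}(t)+H_{32}(t)K_t]$ and $P_t=F_1(K_t)[H_{41}(t)+H_{42}(t)K_t]$, which blow up as $t\searrow 0$ because $\|K_t(\varrho_1)\|\to\infty$ there. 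Hence the passage to the limit $\epsilon\to 0$ is exactly the unproved step. The paper avoids any limit: since $\tilde K_t=K_t^{-1}$ extends continuously to $[0,T)$ (Lemma~\ref{lemma43}), it solves the \emph{dual} Hamiltonian system forward on $[0,T/2]$ with $\tilde x_0=\tilde x\in\ker\tilde K_0$ and $\tilde y_{T/2}=\tilde K_{T/2}\tilde x_{T/2}$, where $\tilde K,\tilde L,\tilde P$ are all regular, then switches to the primal system on $[T/2,T]$ with $x_{T/2}=\tilde y_{T/2}$, $y_T=0$, and glues the two pieces by the Legendre transformation of Section~\ref{3}; the identity $x_0=\tilde y_0=\tilde K_0\tilde x=0$ then delivers the boundary condition with $y_0=\tilde x\neq0$. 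Your own use of $\tilde K_\epsilon\eta$ shows you are circling this idea, but the dual FBSDE on $[0,T/2]$ is the ingredient that actually closes the argument.

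Two further remarks. First, your Step 3 addresses something the paper does not prove: its proof only yields that the eigenfunction space has dimension at most $\dim\ker\tilde K_0(\varrho_1)\le n$ (read ``less than $n$'' in the statement as ``at most $n$''), and never establishes $\tilde K_0\neq 0$; your sign analysis of $\tilde H_{11}$ is a sketch, not a proof, and is not needed to match the paper's conclusion. Second, your linearity remark only bounds the dimension of the family you construct; to bound the whole eigenfunction space you must invoke the uniqueness clause of Lemma~\ref{lemma3.2} on each half-interval to show that \emph{every} eigenfunction satisfies $\tilde y_t=\tilde K_t\tilde x_t$ on $[0,T/2]$ and $y_t=K_tx_t$ on $[T/2,T]$, hence is determined by $y_0\in\ker\tilde K_0$.
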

\begin{proof}
  From Lemma \ref{lemma41}-\ref{lemma43}, we know that there exists a unique $\bar\varrho>0$ such that the blow-up time of the Riccati equation is $t^K_{\bar\varrho}=0$. Then we would like to prove that $\bar\rho=1-\bar\varrho$ is an eigenvalue. In fact, if we can find a non-trivial solution of the Hamiltonian system \eqref{3825} corresponding to $\bar\rho$, then the desired result is obvious. According to Lemma \ref{lemma43}, the following set:
  \begin{equation*}
    N=\Big\{\tilde x\ |\ \tilde x\in \mathbb R^n, \|\tilde x\|=1,\Big\langle\lim_{t\searrow 0}K^{-1}_t(\bar\varrho) \tilde x,\tilde x\Big\rangle =0 \Big\},
  \end{equation*}
  is nonempty. Define $I_1=[0,\frac{T}{2}]$ and $I_2=[\frac{T}{2},T]$. Noticing that $\tilde K_t(\varrho)$ is well-defined on $[0,T)$, i.e., there is no blow-up time of $\tilde K_t(\varrho)$ on $[0,T)$. Then we introduce the stochastic Hamiltonian system with coefficient matrix \eqref{3831} as follows,
  \begin{equation}\label{3832}
  \left\{
  \begin{aligned}
    &\mathrm{d}\tilde x_t=[\tilde H_{21}(t)\tilde x_t+\tilde H_{22}(t)\tilde y_t+\tilde H_{23}(t) \tilde z_t+\tilde H_{24}(t)\tilde \theta_{t}]\mathrm{d}t\\
    &\qquad\quad+[\tilde H_{31}(t)\tilde x_t+\tilde H_{32}(t)\tilde y_t+\tilde H_{33}(t) \tilde z_t]\mathrm{d}B_t\\
    &\qquad\quad +[\tilde H_{41}(t)\tilde x_{t-}+\tilde H_{42}(t)\tilde y_{t-} +\tilde H_{44}(t)\tilde \theta_{t}]\mathrm{d}\tilde V_t,\\
    &\mathrm{d}\tilde y_t=-[\tilde H_{11}(t)\tilde x_t+\tilde H_{12}(t)\tilde y_t+\tilde H_{13}(t) \tilde z_t+\tilde H_{14}(t)\tilde \theta_{t}]\mathrm{d}t+\tilde z_t\mathrm{d}B_t+\tilde\theta_{t}\mathrm{d}\tilde V_t,\\
    &\tilde x_0=\tilde x\in N,\qquad \tilde y_{\frac{T}{2}}=\tilde K_{\frac{T}{2}}\tilde x_{\frac{T}{2}},\qquad t\in I_1.
  \end{aligned}
  \right.
\end{equation}
Since $K_t(\varrho)$ and $\tilde K_{t}(\varrho)$ is non-negative, $-H_{33}^{-1}(t)+K_t$, $-H_{44}^{-1}(t)+K_t$, $-H_{33}^{-1}(t)+\tilde K_t$ and $-H_{44}^{-1}(t)+\tilde K_t$ are all invertible. Then $I_n-K_tH_{33}(t)$ and $I_n-K_tH_{44}(t)$ are invertible. And the same conclusion can be derived about $I_n-\tilde K_t\tilde H_{33}(t)$ and $I_n-\tilde K_t\tilde H_{44}(t)$. Hence, we can obtain the solution of Hamiltonian system \eqref{3832} by Lemma \ref{lemma3.2}. More specifically, FBSDE \eqref{3832} can be solved by
\begin{equation*}
  \left\{
  \begin{aligned}
    &\mathrm{d}\tilde x_t=[\tilde H_{21}(t)+\tilde H_{22}(t)\tilde K_t+\tilde H_{23}(t) \tilde L_t+\tilde H_{24}(t)\tilde P_{t}]\tilde x_t\mathrm{d}t\\
    &\qquad\quad+[\tilde H_{31}(t)+\tilde H_{32}(t)\tilde K_t+\tilde H_{33}(t) \tilde L_t]\tilde x_t\mathrm{d}B_t\\
    &\qquad\quad +[\tilde H_{41}(t)+\tilde H_{42}(t)\tilde K_{t} +\tilde H_{44}(t)\tilde P_{t}]\tilde x_{t-}\mathrm{d}\tilde V_t,\\
    &\tilde x_0=\tilde x\in N,\\
    &\tilde y_t=\tilde K_t\tilde x_t,\quad \tilde z_t=\tilde L_t\tilde x_t,\quad \tilde \theta_t=\tilde P_t\tilde x_t,\quad t\in I_1.
  \end{aligned}
  \right.
\end{equation*}
Thus one can get $\tilde y_{\frac{T}{2}}$. Let $x_{\frac{T}{2}}=\tilde y_{\frac{T}{2}}$, we can solve the following Hamiltonian system on the interval $I_2$ as follows,
\begin{equation*}
  \left\{
  \begin{aligned}
    &\mathrm{d}x_t=[H_{21}(t)x_t+\varrho H_{22}(t)y_t +\varrho H_{23}(t) z_t+\varrho H_{24}(t) \theta_{t}]\mathrm{d}t\\
    &\qquad\quad +[\varrho H_{31}(t)x_t+\varrho H_{32}(t)y_t+H_{33}(t)z_t]\mathrm{d}B_t\\
    &\qquad\quad+[\varrho H_{41}(t)x_{t-}+\varrho H_{42}(t)y_{t-}+H_{44}(t)\theta_{t}]\mathrm{d}\tilde V_t,\\
    &\mathrm{d}y_t=-[H_{11}(t)x_t+H_{12}(t)y_t+\varrho H_{13}(t)z_t+\varrho H_{14}(t) \theta_{t}]\mathrm{d}t+z_t\mathrm{d}B_t+\theta_{t}\mathrm{d}\tilde V_t,\\
    &x_{\frac{T}{2}}=\tilde y_{\frac{T}{2}},\qquad y_T=0,\qquad t\in I_2.
  \end{aligned}
  \right.
\end{equation*}
By the similar argument, the above FBSDE can be solved by
\begin{equation*}
  \left\{
  \begin{aligned}
    &\mathrm{d}x_t=[H_{21}(t)+\varrho H_{22}(t)K_t +\varrho H_{23}(t) L_t+\varrho H_{24}(t) P_{t}]x_t\mathrm{d}t\\
    &\qquad\quad +[\varrho H_{31}(t)+\varrho H_{32}(t)K_t+H_{33}(t)L_t]x_t\mathrm{d}B_t\\
    &\qquad\quad+[\varrho H_{41}(t)+\varrho H_{42}(t)K_t+H_{44}(t)P_t]x_{t-}\mathrm{d}\tilde V_t,\\
    &x_{\frac{T}{2}}=\tilde y_{\frac{T}{2}},\\
    & y_t=K_tx_t,\quad z_t=L_tx_t,\quad \theta_t=P_tx_t,\quad  t\in I_2.
  \end{aligned}
  \right.
\end{equation*}
By virtue of the dual Legendre transformation, we know that the quadruple $(x_t,y_t,z_t,\theta_t)$ defined by
\begin{equation*}
  \left\{
  \begin{aligned}
    &(\tilde y_t,\tilde x_t, \tilde H_{31}(t)\tilde x_t+\tilde H_{32}(t)\tilde y_t+\tilde H_{33}(t)\tilde z_t,\tilde H_{41}(t)\tilde x_t+\tilde H_{42}(t)\tilde y_t+\tilde H_{44}(t)\tilde \theta_t),\qquad t\in I_1,\\
    &(x_t,y_t,z_t,\theta_t),\qquad t\in I_2,
  \end{aligned}
  \right.
\end{equation*}
is a non-trivial solution of the Hamiltonian system \eqref{3825}. By virtue of Lemma \ref{lemma3.2}, all non-trivial solutions satisfy $\tilde y_t=\tilde K_t(\bar\varrho)\tilde x_t$, $\in I_1$ and $y_t=K_t(\bar\varrho) x_t$, $t\in I_2$. Moreover, for any given $\bar\rho$, $x_0=\tilde y_0=\tilde K_0(\bar\varrho)\tilde x=0$. The space of eigenfunctions consists of all linear combinations of these solutions. Thus the dimensional of the eigenfunction space is less than $\dim \ker \tilde K_0(\bar\varrho)\leq n$.

Finally, if $\tilde\rho<\bar\rho$, then the solution $K_t(\tilde\rho)\geq 0$ of Riccati equation \eqref{3725} is uniformly bounded on $[0,T]$ by Lemma \ref{lemma41}. Thus $\det[I_n-K_tH_{33}(t)]^{-1}\ne 0$ and $\det[I_n-K_tH_{44}(t)]^{-1}\ne 0$ on $[0,T]$. According to Lemma \ref{lemma3.2}, Hamiltonian system \eqref{3825} admits a unique trivial solution, which means that $\bar\rho$ is the first eigenvalue.
\end{proof}

\section{One-dimensional Case}
In this section, we study the eigenvalue problem in one-dimensional case and present some more concrete conclusions. The perturbation matrix $\bar H$ has the following form,
\begin{equation*}
  \bar H(t)=\left(\begin{matrix}
    0&0&0&0\\
    0&\bar H_{22}(t)&0&0\\
    0&0&0&0\\
    0&0&0&0
  \end{matrix}\right).
\end{equation*}
Then Hamiltonian system \eqref{LFBSDE} becomes
\begin{equation}\label{31034}
  \left\{
  \begin{aligned}
    &\mathrm{d}x_t=[H_{21}(t)x_t+ H_{22}(t)y_t-\rho \bar H_{22}(t)y_t + H_{23}(t) z_t+ H_{24}(t) \theta_{t}]\mathrm{d}t\\
    &\qquad\quad +[ H_{31}(t)x_t+ H_{32}(t)y_t+H_{33}(t)z_t]\mathrm{d}B_t\\
    &\qquad\quad+[ H_{41}(t)x_{t-}+ H_{42}(t)y_{t-}+H_{44}(t)\theta_{t}]\mathrm{d}\tilde V_t,\\
    &\mathrm{d}y_t=-[H_{11}(t)x_t+H_{12}(t)y_t+  H_{13}(t)z_t+ H_{14}(t) \theta_{t}]\mathrm{d}t+z_t\mathrm{d}B_t+\theta_{t}\mathrm{d}\tilde V_t,\\
    &x_0=0,\qquad y_T=0.
  \end{aligned}
  \right.
\end{equation}
And we introduce some assumption as follows.

\noindent\textbf{Assumption (H4)}

i) $H^{\top}=H$, $H_{kl}, \bar H_{22}\in C(0,T;\mathbb R)$, $k,l=1,2,3,4$.

ii) $H(t)$ satisfies the condition \eqref{mono} uniformly in $t\in [0,T]$.

iii) $H_{23}(t)=-H_{33}(t)H_{13}(t)$, $H_{24}(t)=-H_{44}(t)H_{14}(t)$, $\bar H_{22}(t)<0$, $t\in [0,T]$.

\begin{remark}
  In order to study the eigenvalue problem of stochastic Hamiltonian system, the decoupling method of this regime-switching FBSDE \eqref{LFBSDE} plays a key role. In this process, it is crucial to study the well-posedness of generalized Riccati equations system \eqref{Riccati1}, which is guaranteed by iii) in Assumption (H4).
\end{remark}
Then the related Riccati equation \eqref{3616} becomes
\begin{equation}\label{3935}
  \left\{
  \begin{aligned}
    &-\dot{k}_t=[2H_{21}(t)+H_{13}^2(t)+H_{14}^2(t)]k_t+H_{11}(t)\\
    &\qquad\qquad +[H_{22}(t)-\rho \bar H_{22}(t)-H_{33}(t)H_{13}^2(t) -H_{44}(t)H_{14}^2(t)]k_t^2,\\
    &k_T=0.
  \end{aligned}
  \right.
\end{equation}
And the dual Hamiltonian system \eqref{3913} becomes
\begin{equation}\label{31136}
  \left\{
  \begin{aligned}
    &\mathrm{d}\tilde x_t=[\tilde H_{21}(t)\tilde x_t+\tilde H_{22}(t)\tilde y_t+\tilde H_{23}(t) \tilde z_t+\tilde H_{24}(t)\tilde \theta_{t}]\mathrm{d}t\\
    &\qquad\quad+[\tilde H_{31}(t)\tilde x_t+\tilde H_{32}(t)\tilde y_t+\tilde H_{33}(t) \tilde z_t]\mathrm{d}B_t\\
    &\qquad\quad +[\tilde H_{41}(t)\tilde x_{t-}+\tilde H_{42}(t)\tilde y_{t-} +\tilde H_{44}(t)\tilde \theta_{t}]\mathrm{d}\tilde V_t,\\
    &\mathrm{d}\tilde y_t=-[\tilde H_{11}(t)\tilde x_t+\tilde H_{12}(t)\tilde y_t+\tilde H_{13}(t) \tilde z_t+\tilde H_{14}(t)\tilde \theta_{t}]\mathrm{d}t+\tilde z_t\mathrm{d}B_t+\tilde\theta_{t}\mathrm{d}\tilde V_t,\\
    &\tilde x_0=0,\qquad \tilde y_T=0,
  \end{aligned}
  \right.
\end{equation}
where
\begin{equation}
  \begin{aligned}
    \tilde H=\left(\begin{matrix}
      \tilde H_{11}&\tilde H_{12}& - H_{23}H_{33}^{-1}&-H_{24}H_{44}^{-1}\\
      \tilde H_{21}&\tilde H_{22}&- H_{13}H_{33}^{-1}&- H_{14}H_{44}^{-1}\\
      -  H_{33}^{-1}H_{32}&- H_{33}^{-1}H_{31}&H_{33}^{-1}&0\\
      - H_{44}^{-1}H_{42}&- H_{44}^{-1}H_{41}&0&H_{44}^{-1}
    \end{matrix}\right).
  \end{aligned}
\end{equation}
with $\tilde H_{11}=H_{23}H_{33}^{-1}H_{32}+H_{24}H_{44}^{-1}H_{42}- H_{22}+\rho \bar H_{22}$, $\tilde H_{12}= H_{23}H_{33}^{-1}H_{31} + H_{24}H_{44}^{-1}H_{41}-H_{21}$, $\tilde H_{21}= H_{13}H_{33}^{-1}H_{32}+ H_{14}H_{44}^{-1}H_{42}-H_{12}$, $\tilde H_{22} = H_{13}H_{33}^{-1}H_{31}+ H_{14}H_{44}^{-1}H_{41}-H_{11}$. Under Assumption (H4), the dual Riccati equation \eqref{22723} can be rewritten as
\begin{equation}\label{31036}
  \begin{aligned}
    &-\dot{\tilde k}_t=-[2H_{21}(t)+H_{13}^2(t)+H_{14}^2(t)]\tilde k_t-H_{11}(t)\tilde k_t^2\\
    &\qquad\qquad -[H_{22}(t)-\rho \bar H_{22}(t)-H_{33}(t)H_{13}^2(t) -H_{44}(t)H_{14}^2(t)].
  \end{aligned}
\end{equation}
Since $\bar H_{22}(t)$ is continuous with respect to (w.r.t) $t$, we can find two negative constant $h_1$ and $h_2$ such that $h_1\leq \bar H_{22}(t)<h_{2}<0$, $t\in [0,T]$.
\subsection{The existence of eigenvalue}
In order to investigate this problem, we first analysis the properties of blow-up time of Riccati equation, which plays a key role of this problem. By the similar method of Lemma \ref{lemma41}, we obtain
\begin{lemma}\label{lemma52}
  Under Assumption (H4), the blow-up time $t^k_{\rho}$ of Riccati equation \eqref{3935} is increasing w.r.t $\rho$. Moreover,
  \begin{equation*}
    \lim_{\rho\nearrow +\infty}t_{\rho}^{k}=T.
  \end{equation*}
\end{lemma}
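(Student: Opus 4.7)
My plan is to mimic the proof of Lemma~\ref{lemma41}, exploiting that \eqref{3935} is a scalar Riccati equation in which the parameter $\rho$ enters only through the coefficient of $k_t^2$. Under Assumption~(H4) the coefficient
\begin{equation*}
C_\rho(t):=H_{22}(t)-\rho\bar H_{22}(t)-H_{33}(t)H_{13}^2(t)-H_{44}(t)H_{14}^2(t)
\end{equation*}
is strictly increasing in $\rho$ because $-\bar H_{22}(t)\geq |h_2|>0$, while the linear coefficient $2H_{21}(t)+H_{13}^2(t)+H_{14}^2(t)$ and the inhomogeneous term $H_{11}(t)\geq\beta>0$ do not depend on $\rho$. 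Writing \eqref{3935} as $-\dot k_t=F_\rho(t,k_t)$, one has $F_{\rho_2}(t,k)\geq F_{\rho_1}(t,k)$ for every $k\geq 0$ whenever $\rho_1<\rho_2$. Combined with $k_t(\rho)\geq 0$ on its interval of existence (by \cite[Lemma~8.1]{P2000}), the scalar Riccati comparison theorem \cite[Lemma~8.2]{P2000} yields $k_t(\rho_2)\geq k_t(\rho_1)\geq 0$ throughout the common existence interval. Passing to backward time $\tau=T-t$, this means the larger-$\rho$ solution reaches infinity earlier in $\tau$, and hence later in $t$; equivalently $t^k_{\rho_1}\leq t^k_{\rho_2}$, with strict inequality by the uniform strict positivity of $-\bar H_{22}$.

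For the limit, the idea is to minorize $k_t(\rho)$ by an explicitly solvable constant-coefficient Riccati whose blow-up time tends to $T$ as $\rho\to+\infty$. Set $M:=\sup_{t\in[0,T]}\bigl|2H_{21}(t)+H_{13}^2(t)+H_{14}^2(t)\bigr|$ and $\beta_0:=\inf_{t\in[0,T]}H_{11}(t)>0$. Since $-H_{33}(t)H_{13}^2(t)-H_{44}(t)H_{14}^2(t)\geq 0$ and $H_{22}(t)\geq-\|H_{22}\|_\infty$, for $\rho$ large enough we have $C_\rho(t)\geq \tfrac{\rho|h_2|}{2}$ uniformly in $t$. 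Consequently the right-hand side of \eqref{3935} is bounded below by $-Mk_t+\beta_0+\tfrac{\rho|h_2|}{2}k_t^2$. Completing the square and setting $u=k-M/(\rho|h_2|)$, $\tau=T-t$, one obtains $\dot u\geq \tfrac{\rho|h_2|}{2}u^2+\beta_0/2$ provided $\rho$ is large enough that $M^2/(2\rho|h_2|)\leq\beta_0/2$. The explicit tangent solution of $\dot v=\tfrac{\rho|h_2|}{2}v^2+\beta_0/2$ blows up in backward time $\tau^*(\rho)=O(\rho^{-1/2})$, and the comparison theorem gives $t^k_\rho\geq T-\tau^*(\rho)$. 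Combined with $t^k_\rho\leq T$, this yields $\lim_{\rho\nearrow+\infty}t^k_\rho=T$.

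The only real subtlety is the same one encountered in Lemma~\ref{lemma41}: verifying that the indefinite-sign linear term $-Mk_t$ in the lower bound does not disrupt the blow-up driven by the large quadratic coefficient. Completing the square absorbs it into the vanishing correction $-M^2/(2\rho|h_2|)$, which is dominated by $\beta_0/2$ once $\rho$ is sufficiently large; the remaining tangent-blow-up computation is structurally identical to the one used for $\lim_{\varrho\searrow-\infty}t^K_\varrho=T$ in Lemma~\ref{lemma41}, with the roles of $\varrho\searrow-\infty$ and $\rho\nearrow+\infty$ interchanged.
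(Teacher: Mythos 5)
Your proof is correct and follows exactly the route the paper intends: the paper gives no explicit proof of Lemma~\ref{lemma52}, only the remark that it follows ``by the similar method of Lemma~\ref{lemma41}'', and your comparison-theorem argument for monotonicity (using that $\rho$ enters only through the quadratic coefficient via $-\rho\bar H_{22}\geq\rho|h_2|$ and that $k_t(\rho)\geq 0$) together with the constant-coefficient tangent-type minorant blowing up within $O(\rho^{-1/2})$ of $T$ is precisely that method. The only cosmetic point is that your parenthetical claim of \emph{strict} monotonicity is not needed for the stated lemma (strictness and continuity are deferred to Lemma~\ref{lemma53}) and would require a short additional argument beyond the ordering $k_t(\rho_2)\geq k_t(\rho_1)$.
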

By Lemma \ref{lemma52}, define $\rho_a:=\{\rho\ |\ \rho\geq 0, t_{\rho}^k>-\infty\}$ and
\begin{equation*}
  \rho_b:=\frac{\min_{t\in [0,T]}[H_{22}(t)-H_{33}(t)H_{13}^2(t)-H_{44}(t)H_{14}^2(t)]}{\max_{t\in [0,T]}\bar H_{22}(t)},
\end{equation*}
then we have $H_{22}(t)-H_{33}(t)H_{13}^2(t)-H_{44}(t)H_{14}^2(t)-\rho \bar H_{22}(t)>0$, $t\in [0,T]$, $\rho\geq \rho_1$. Furthermore, we can obtain the following result by the similar method of Lemma \ref{lemma41}-\ref{lemma43}.
\begin{lemma}\label{lemma53}
  Let Assumption (H4) hold, the blow-up time $t^k_{\rho}$ is continuous and strictly increasing on $(\rho_0,+\infty)$, where $\rho_0=\rho_a\vee \rho_b$.
\end{lemma}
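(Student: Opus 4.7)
The plan is to mirror the treatment of the multi-dimensional case in Lemmas \ref{lemma41}--\ref{lemma43}, specialized to the scalar setting and leveraging the reciprocal substitution $\tilde k_t = 1/k_t$ of Section \ref{3}. Writing the scalar Riccati equation \eqref{3935} as $-\dot k_t = A(t,\rho) k_t^2 + B(t) k_t + C(t)$, $k_T = 0$, with $A(t,\rho) = H_{22}(t) - \rho \bar H_{22}(t) - H_{33}(t) H_{13}^2(t) - H_{44}(t) H_{14}^2(t)$, $B(t) = 2H_{21}(t) + H_{13}^2(t) + H_{14}^2(t)$, and $C(t) = H_{11}(t) \geq \beta > 0$, the definitions of $\rho_a$ and $\rho_b$ guarantee that for every $\rho \in (\rho_0, +\infty)$ one has $A(t,\rho) > 0$ uniformly in $t$ and $t^k_\rho < T$. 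As in the proof of Lemma \ref{lemma41}, $k_t(\rho) > 0$ on $(t^k_\rho, T)$, since $-\dot k_T = C(T) > 0$ and any later zero would force $\dot k < 0$ there, contradicting positivity on one side.

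For strict monotonicity, fix $\rho_0 < \rho_1 < \rho_2$ and set $\delta_t = k_t(\rho_2) - k_t(\rho_1)$. Subtracting the two Riccati equations yields the linear variational equation
\[
-\dot\delta_t = \bigl[A(t,\rho_2)\bigl(k_t(\rho_2)+k_t(\rho_1)\bigr) + B(t)\bigr] \delta_t + (\rho_2-\rho_1)\,|\bar H_{22}(t)|\,k_t^2(\rho_1), \qquad \delta_T = 0.
\]
Since $|\bar H_{22}(t)| \geq |h_2| > 0$ and $k_t(\rho_1) > 0$ on its existence interval, the source is strictly positive, and variation-of-constants gives $\delta_t > 0$ strictly on the common domain $(t^k_{\rho_1} \vee t^k_{\rho_2}, T)$. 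Passing to $\tilde k_t(\rho) = 1/k_t(\rho)$, which extends smoothly through $t^k_\rho$ as a solution of the dual Riccati \eqref{31036} with $\tilde k_{t^k_\rho}(\rho) = 0$ and $\dot{\tilde k}_{t^k_\rho}(\rho) = A(t^k_\rho, \rho) > 0$ (read off from \eqref{31036} at the zero), I obtain $\tilde k_t(\rho_2) < \tilde k_t(\rho_1)$ strictly on the common domain. A short case analysis then excludes $t^k_{\rho_2} \leq t^k_{\rho_1}$: the equality case $t^k_{\rho_2} = t^k_{\rho_1} =: t^*$ contradicts the linear asymptotics $\tilde k_t(\rho_i) \sim A(t^*, \rho_i)(t - t^*)$ together with $A(t^*, \rho_2) > A(t^*, \rho_1)$, while $t^k_{\rho_2} < t^k_{\rho_1}$ contradicts continuity and strict positivity of $\tilde k(\rho_2)$ at $t = t^k_{\rho_1}$ against $\tilde k_{t^k_{\rho_1}}(\rho_2) \leq \tilde k_{t^k_{\rho_1}}(\rho_1) = 0$ obtained by passing to the limit in the strict inequality.

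For continuity, I continue to work with $\tilde k(\rho)$. Standard continuous dependence of ODE solutions on parameters gives joint continuity of $(t,\rho) \mapsto \tilde k_t(\rho)$ on its (open) domain, and the transversality $\dot{\tilde k}_{t^k_\rho}(\rho) = A(t^k_\rho, \rho) > 0$ is uniform on compact subsets of $(\rho_0, +\infty)$. The implicit function theorem applied to the unique zero of $\tilde k(\rho)$ in $[0, T)$ then delivers continuity of $\rho \mapsto t^k_\rho$ on $(\rho_0, +\infty)$.

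The main technical obstacle is translating pointwise strict monotonicity of the Riccati solutions into strict separation of the blow-up times: a naive comparison on the original equation only yields weak monotonicity of $t^k_\rho$, since two solutions could a priori diverge at the same instant. The reciprocal substitution $\tilde k = 1/k$ introduced in Section \ref{3} is the decisive device---it recasts the blow-up of $k$ as a transverse zero crossing of a smooth function, and this single transversality then feeds both the strict monotonicity (via the case analysis above) and the continuity (via the implicit function theorem) in one stroke.
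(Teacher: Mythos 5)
Your proof is correct and follows essentially the same route the paper intends: the paper only asserts Lemma \ref{lemma53} ``by the similar method of Lemma \ref{lemma41}--\ref{lemma43}'', i.e.\ comparison for the Riccati flow plus the reciprocal/dual substitution $\tilde k = k^{-1}$, which is exactly your device of turning the blow-up into a transverse zero of the dual equation \eqref{31036}. Your write-up is in fact more explicit than the paper's (the transversality $\dot{\tilde k}_{t^k_\rho} = A(t^k_\rho,\rho) > 0$ and the case analysis ruling out coincident blow-up times are the substance that the paper delegates to \cite{P2000}), and I see no gap.
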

\begin{lemma}\label{lemma54}
  Under Assumption (H4), the blow-up time $t_{\rho}^{\tilde k}$ of dual Riccati equation \eqref{31036} is increasing in $\rho$, and
  \begin{equation*}
    \lim_{\rho\nearrow +\infty}t_{\rho}^{\tilde k}=T.
  \end{equation*}
\end{lemma}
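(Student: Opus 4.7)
The plan is to mirror the proofs of Lemmas \ref{lemma41} and \ref{lemma52}, adapting the comparison-Riccati machinery to the dual equation \eqref{31036} with natural terminal condition $\tilde k_T = 0$. First I rewrite the dual Riccati as
$$\dot{\tilde k}_t = a(t)\tilde k_t + H_{11}(t)\tilde k_t^2 + c(t,\rho),$$
where $a(t) := 2H_{21}(t) + H_{13}^2(t) + H_{14}^2(t)$ and $c(t,\rho) := H_{22}(t) - \rho\bar H_{22}(t) - H_{33}(t)H_{13}^2(t) - H_{44}(t)H_{14}^2(t)$. By Assumption (H4) (with $h_1 \leq \bar H_{22}(t) \leq h_2 < 0$), $c(\cdot,\rho)$ is strictly increasing in $\rho$ and $\min_{t\in[0,T]} c(t,\rho) \to +\infty$ as $\rho \to +\infty$; together with $H_{11}(t) \geq \beta > 0$, this forces the solution to satisfy $\tilde k_t < 0$ for $t<T$ close to $T$ and to blow up to $-\infty$ in finite backward time once $\rho$ is large.

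For the monotonicity, fix $\rho_1 < \rho_2$ and let $D_t := \tilde k_t(\rho_1) - \tilde k_t(\rho_2)$ on the common interval of existence. Subtracting the two ODEs gives the linear equation $\dot D_t = \alpha(t) D_t + (\rho_2 - \rho_1)\bar H_{22}(t)$ with $\alpha(t) = a(t) + H_{11}(t)(\tilde k_t(\rho_1) + \tilde k_t(\rho_2))$ and $D_T = 0$. Since $(\rho_2 - \rho_1)\bar H_{22}(t) < 0$, the variation-of-constants formula immediately yields $D_t > 0$ for $t<T$, i.e., $\tilde k_t(\rho_1) > \tilde k_t(\rho_2)$ throughout. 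Because $\tilde k(\rho_2)$ is the more negative and both escape to $-\infty$ going backward, $\tilde k(\rho_2)$ reaches $-\infty$ at a larger value of $t$, so $t^{\tilde k}_{\rho_2} \geq t^{\tilde k}_{\rho_1}$.

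For the limit, I would pass to reverse time $s = T-t$ and, on the region $\tilde k \leq 0$ secured above, use the bound $-a(t)\tilde k \leq \|a\|_\infty|\tilde k|$ to obtain
$$\frac{d\tilde k}{ds} \leq \|a\|_\infty|\tilde k| - H_{11,\min}\tilde k^2 - c_{\min}(\rho).$$
Completing the square in $|\tilde k|$ and setting $u := \tilde k + \|a\|_\infty/(2H_{11,\min})$ dominates the right-hand side by $-H_{11,\min} u^2 - C(\rho)$, with $C(\rho) := c_{\min}(\rho) - \|a\|_\infty^2/(4H_{11,\min}) \to +\infty$. The autonomous scalar Riccati $\dot w = -H_{11,\min} w^2 - C(\rho)$ can be integrated explicitly via $\arctan$, giving a blow-up time of order $1/\sqrt{C(\rho)}$, and the standard ODE comparison principle then forces $T - t^{\tilde k}_\rho = O(1/\sqrt{c_{\min}(\rho)}) \to 0$.

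The main technical obstacle is that $a(t)$ has no definite sign, so the linear term $a(t)\tilde k_t$ is not directly amenable to a one-sided Riccati comparison; completing the square after the crude bound $-a\tilde k \leq \|a\|_\infty|\tilde k|$ absorbs this term into a shifted quadratic at the harmless cost of a constant that is swallowed by the growing $c_{\min}(\rho)$. A secondary check is that $\tilde k$ remains in the negative region along the comparison, which follows from $\dot{\tilde k}_T = c(T,\rho)>0$ (forcing $\tilde k<0$ just below $T$) together with the fact that, once $\tilde k<0$, the right-hand side $a\tilde k + H_{11}\tilde k^2 + c(\rho)$ is positive outside a bounded neighborhood of $\tilde k = 0$, so the trajectory cannot return to $0$ before blowing up.
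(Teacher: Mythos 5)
Your argument is correct and is essentially the strategy the paper intends for this lemma (it offers no separate proof, deferring to the method of Lemma \ref{lemma41}): establish monotonicity by a comparison/variation-of-constants argument for the difference of two solutions, and prove $\lim_{\rho\nearrow+\infty}t^{\tilde k}_{\rho}=T$ by dominating the dual Riccati equation with an autonomous scalar Riccati equation whose backward blow-up time is $O\bigl(c_{\min}(\rho)^{-1/2}\bigr)$. The only technical variation is that you absorb the sign-indefinite linear term $a(t)\tilde k$ via the crude bound $-a\tilde k\le \|a\|_{\infty}|\tilde k|$ followed by completing the square, whereas the proof of Lemma \ref{lemma41} removes the analogous term by conjugating with the integrating factor $e^{\int H_{12}}$ before comparing with the constant-coefficient equation; both routes land on the same $\arctan$-integrable comparison equation, and your observation that $c(T,\rho)>0$ keeps $\tilde k$ in the negative region (so the blow-up is necessarily to $-\infty$) correctly supplies the detail needed to convert the pointwise ordering $\tilde k(\rho_1)>\tilde k(\rho_2)$ into the ordering of blow-up times.
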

Then let $\tilde\rho_{a}:=\inf\{\rho\ |\ \rho>0, t_{\rho}^{\tilde k}>-\infty\}$, we have
\begin{lemma}\label{lemma55}
  Let Assumption (H4) hold, the blow-up time $t^{\tilde k}_{\rho}$ is continuous and strictly increasing on $(\tilde\rho_0,+\infty)$, where $\tilde\rho_0=\tilde\rho_a\vee \rho_b$.
\end{lemma}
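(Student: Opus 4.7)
The plan is to adapt the arguments developed for the primal Riccati equation in Lemmas 5.2--5.3 (themselves patterned on Lemmas 4.1--4.3) to the dual Riccati equation \eqref{31036}. I would first rewrite it in forward form
\begin{equation*}
\dot{\tilde k}_t = [2H_{21}(t)+H_{13}^2(t)+H_{14}^2(t)]\tilde k_t + H_{11}(t)\tilde k_t^2 + \gamma(t,\rho), \qquad \tilde k_T = 0,
\end{equation*}
where $\gamma(t,\rho)=H_{22}(t)-\rho\bar H_{22}(t)-H_{33}(t)H_{13}^2(t)-H_{44}(t)H_{14}^2(t)$, and observe that $\rho$ enters only linearly through the source $\gamma$. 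On $(\tilde\rho_0,+\infty)$ two structural facts would then be in hand: $\gamma(\cdot,\rho)>0$ uniformly (since $\rho>\rho_b$) and the blow-up time is finite (since $\rho>\tilde\rho_a$).

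For strict monotonicity I would fix $\tilde\rho_0<\rho_1<\rho_2$. The crucial input is that $\bar H_{22}\leq h_2<0$ yields
\begin{equation*}
\gamma(t,\rho_2)-\gamma(t,\rho_1) = -(\rho_2-\rho_1)\bar H_{22}(t)\geq (\rho_2-\rho_1)|h_2|>0
\end{equation*}
uniformly in $t$. I would then apply the standard comparison principle for Riccati ODEs integrated backward from the common terminal value $\tilde k_T=0$; this strict inequality in the source forces $\tilde k_t(\rho_2)<\tilde k_t(\rho_1)$ at every $t<T$ in the common domain of existence. Since the quadratic coefficient $H_{11}(t)\geq \beta>0$ then drives the strictly smaller (more negative) solution $\tilde k_\cdot(\rho_2)$ to $-\infty$ strictly earlier in backward time, we conclude $t_{\rho_2}^{\tilde k}>t_{\rho_1}^{\tilde k}$.

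For continuity of $\rho\mapsto t_\rho^{\tilde k}$ I would combine the monotonicity just established with two standard ODE observations. Upper semicontinuity would follow from continuous dependence of ODE solutions on parameters on any compact subinterval of $(t_{\rho_*}^{\tilde k},T]$ where $\tilde k_\cdot(\rho_*)$ stays bounded, for any fixed $\rho_*\in(\tilde\rho_0,+\infty)$. Lower semicontinuity would come from the quadratic self-blow-up mechanism: choosing $t_0$ slightly above $t_{\rho_*}^{\tilde k}$ makes $|\tilde k_{t_0}(\rho_*)|$ as large as we please, hence $|\tilde k_{t_0}(\rho)|$ remains large for $\rho$ near $\rho_*$ by continuous dependence, and the $H_{11}\tilde k^2$ term then forces $\tilde k(\cdot,\rho)$ itself to blow up within $O(1/|\tilde k_{t_0}(\rho)|)$ of $t_0$ in backward time. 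Together these would yield continuity on $(\tilde\rho_0,+\infty)$.

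The main obstacle is keeping the signs straight: the dual solution $\tilde k_t$ diverges to $-\infty$ rather than $+\infty$, so the direction of the inequalities is the opposite of the primal case, and one must verify that increasing $\rho$ (which enlarges the positive source $\gamma$) pushes $\tilde k$ strictly more negative at each $t<T$ and thereby accelerates the quadratic self-blow-up. The sign conditions $\bar H_{22}<0$ and $H_{11}>0$ from Assumption (H4) must be used in unison to obtain the correct orientation; once this is settled, the continuity step is a routine transcription of the perturbation analysis already carried out for the primal Riccati equation in Lemma 5.3.
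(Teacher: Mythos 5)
Your overall strategy---treating $\rho$ as entering only through the source term $\gamma(t,\rho)=H_{22}-\rho\bar H_{22}-H_{33}H_{13}^2-H_{44}H_{14}^2$, which is uniformly positive for $\rho>\rho_b$, then using scalar Riccati comparison for monotonicity and continuous dependence plus the quadratic self--blow-up estimate for continuity---is exactly the route the paper intends: Lemma 5.5 is stated without proof and is meant to follow ``by the similar method'' of Lemmas 4.1--4.3 and 5.2--5.3, which in turn defer to the comparison-theorem arguments of Peng \cite{P2000}. Your sign analysis (that $\tilde k$ runs to $-\infty$ backward in time, and that increasing $\rho$ makes $\tilde k_t(\rho)$ strictly more negative for every $t<T$ in the common existence interval) is correct, and your two-sided semicontinuity argument for $\rho\mapsto t_\rho^{\tilde k}$ is fine.

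The one step that does not go through as written is the deduction of \emph{strict} monotonicity of the blow-up time from the strict pointwise ordering $\tilde k_t(\rho_2)<\tilde k_t(\rho_1)$. Pointwise strict ordering only yields $t_{\rho_2}^{\tilde k}\geq t_{\rho_1}^{\tilde k}$: two strictly ordered solutions can blow up at the same instant (e.g., $-1/(t-\tau)$ and $-2/(t-\tau)$), so ``strictly more negative at each $t$'' does not by itself force ``strictly earlier blow-up,'' and the quadratic coefficient $H_{11}\geq\beta>0$ does not rescue this, since the estimate $t^{\tilde k}_{\rho_2}\geq t_0-C/|\tilde k_{t_0}(\rho_2)|$ degenerates to a weak inequality in the limit. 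The standard fix---and the one implicit in the paper's reliance on the Legendre duality and on \cite[Lemmas 5.2--5.3]{P2000}---is to argue by contradiction: if $t_{\rho_1}^{\tilde k}=t_{\rho_2}^{\tilde k}=:\tau$, pass to the reciprocals $v^{(i)}=1/\tilde k^{(i)}$, which solve the primal-type Riccati equation
\begin{equation*}
-\dot v^{(i)}_t=[2H_{21}+H_{13}^2+H_{14}^2]v^{(i)}_t+H_{11}(t)+\gamma(t,\rho_i)\,(v^{(i)}_t)^2,\qquad v^{(i)}_\tau=0,
\end{equation*}
and run the comparison \emph{forward} from the common zero at $\tau$; since $\gamma(\cdot,\rho_2)>\gamma(\cdot,\rho_1)$ and $(v^{(1)})^2>0$ just to the right of $\tau$, one gets $v^{(2)}_t<v^{(1)}_t<0$ there, i.e.\ $\tilde k^{(2)}_t>\tilde k^{(1)}_t$, contradicting the ordering obtained from the terminal data at $T$. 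With this duality step inserted, your proof is complete and coincides with the paper's intended argument.
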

In order to obtain the eigenvalue of Hamiltonian system \eqref{31034}, we first characterize some properties of blow-up time of the terminal time, whose proof is similar to \cite[Lemma 4.5-4.6]{JW2023}.
\begin{lemma}
  For $0\leq \tilde T_1\leq \tilde T_2\leq T$, $k_t^i$ denote the solution of the following Riccati equation, respectively,
  \begin{equation*}
    \left\{
    \begin{aligned}
      &-\dot{k}_t^i=[2H_{21}(t)+H_{13}^2(t)+H_{14}^2(t)]k^i_t+H_{11}(t)\\
    &\qquad\qquad +[H_{22}(t)-\rho \bar H_{22}(t)-H_{33}(t)H_{13}^2(t) -H_{44}(t)H_{14}^2(t)](k^i_t)^2,\\
    &k_{\tilde T_i}^i=0,\qquad t\leq \tilde T_i.
    \end{aligned}
    \right.
  \end{equation*}
  Then $t_{\rho}^{k^1}\leq t_{\rho}^{k^2}$ (if finite). In addition, $t_{\rho}^{k^i}$ is continuous dependent on $\tilde T_i$, $i=1,2$.
\end{lemma}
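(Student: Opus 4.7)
The plan is to reduce both claims to the monotonicity/comparison theorem and classical continuous-dependence theory for the scalar Riccati ODE
\[
-\dot{k}_t = a(t)k_t + H_{11}(t) + c_{\rho}(t)k_t^{2},
\]
where $a(t) = 2H_{21}(t) + H_{13}^2(t) + H_{14}^2(t)$ and $c_{\rho}(t) = H_{22}(t) - \rho\bar H_{22}(t) - H_{33}(t)H_{13}^2(t) - H_{44}(t)H_{14}^2(t)$. Both $k^1$ and $k^2$ solve the \emph{same} scalar equation; the only difference is the location of the zero terminal condition. Under Assumption (H4) one has $H_{11}(t)\geq\beta>0$ and, for $\rho$ in the relevant range, $c_{\rho}(t)>0$, so the equation is of genuine Riccati type and may blow up in the backward direction.

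First I would check that $k^i_t\geq 0$ on its backward interval of existence: since $k^i_{\tilde T_i}=0$ and the forcing term $H_{11}$ is strictly positive, the same argument used in Lemma \ref{lemma41} (cf.\ Lemma 8.1 in \cite{P2000}) applies. In particular, evaluating $k^2$ at $\tilde T_1\leq \tilde T_2$ gives $k^2_{\tilde T_1}\geq 0 = k^1_{\tilde T_1}$. Because the right-hand side is locally Lipschitz in $k$, the standard backward comparison theorem yields $k^2_t\geq k^1_t$ for every $t$ in the common interval of existence on the left of $\tilde T_1$.

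Now I would obtain the monotonicity of blow-up times by contradiction. Suppose $t_\rho^{k^1}$ and $t_\rho^{k^2}$ are both finite and that $t_\rho^{k^2}<t_\rho^{k^1}$. Then $k^2$ is defined and finite on the whole interval $(t_\rho^{k^2},\tilde T_1]$, and in particular at the point $t_\rho^{k^1}\in(t_\rho^{k^2},\tilde T_1]$. But $k^1_t\to+\infty$ as $t\searrow t_\rho^{k^1}$ while $k^2_t\geq k^1_t$ throughout, forcing $k^2$ to blow up no later than $t_\rho^{k^1}$, a contradiction. Hence $t_\rho^{k^1}\leq t_\rho^{k^2}$.

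For the continuous dependence of $t_\rho^{k^i}$ on $\tilde T_i$, I would combine the monotonicity just proved with the classical continuous dependence of ODE solutions on the terminal data. Fix $\tilde T_i$ and let $\tilde T_i^{(n)}\to\tilde T_i$; by monotonicity the blow-up times $t_\rho^{k^{i,n}}$ converge, say to $\ell$, by squeezing from either side, while on any compact subinterval strictly contained in $(t_\rho^{k^i},\tilde T_i]$ classical ODE theory gives uniform convergence $k^{i,n}\to k^i$. The main obstacle is ruling out the ``gap'' $\ell\ne t_\rho^{k^i}$: this is handled by exploiting the quantitative Riccati-type blow-up, namely that near its blow-up time $k^i_t$ is dominated and bounded below by constants times $(t-t_\rho^{k^i})^{-1}$, so that uniform convergence on subintervals together with the explosion at the endpoint forces the endpoints to coincide. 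This is analogous to the scheme used in Lemma \ref{lemma42} and in Lemmas 4.5--4.6 of \cite{JW2023}, which can essentially be transplanted here with minor changes due to the presence of $\rho\bar H_{22}$ in $c_\rho$.
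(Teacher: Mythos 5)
The paper omits a proof of this lemma, deferring to Lemmas 4.5--4.6 of \cite{JW2023}, and your argument --- nonnegativity of $k^i$ from $H_{11}>0$, the ordering $k^2_{\tilde T_1}\geq 0=k^1_{\tilde T_1}$ combined with non-crossing of trajectories of the same scalar ODE to get $k^2_t\geq k^1_t$ and hence $t_\rho^{k^1}\leq t_\rho^{k^2}$, and continuous dependence together with the $(t-t_\rho^{k})^{-1}$ blow-up rate for the continuity in $\tilde T_i$ --- is exactly the intended route. The one point worth making explicit is that the quantitative lower bound on the blow-up rate (and hence the half of the continuity argument where $\tilde T_i^{(n)}\nearrow\tilde T_i$) requires the quadratic coefficient $c_\rho(t)=H_{22}(t)-\rho\bar H_{22}(t)-H_{33}(t)H_{13}^2(t)-H_{44}(t)H_{14}^2(t)$ to be bounded below by a positive constant, which is guaranteed precisely for $\rho>\rho_b$, the range in which the lemma is actually applied.
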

\begin{lemma}\label{lemma57}
  For $0\leq \tilde T_1\leq \tilde T_2\leq T$, $\tilde k_t^i$ denote the solution of the following Riccati equation, respectively,
  \begin{equation*}
    \left\{
    \begin{aligned}
      &-\dot{\tilde k}^i_t=-[2H_{21}(t)+H_{13}^2(t)+H_{14}^2(t)]\tilde k_t^i-H_{11}(t)(\tilde k_t^i)^2\\
    &\qquad\qquad -[H_{22}(t)-\rho \bar H_{22}(t)-H_{23}(t)H_{13}^2(t) -H_{44}(t)H_{14}^2(t)],\\
    &\tilde k_{\tilde T_i}^i=0,\qquad t\leq \tilde T_i.
    \end{aligned}
    \right.
  \end{equation*}
  Then $t_{\rho}^{\tilde k^1}\leq t_{\rho}^{\tilde k^2}$ (if finite). In addition, $t_{\rho}^{\tilde k^i}$ is continuous dependent on $\tilde T_i$, $i=1,2$.
\end{lemma}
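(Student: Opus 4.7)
The lemma is the dual counterpart to Lemma~5.6, so I would mimic that proof with the original Riccati \eqref{3935} replaced by the dual Riccati \eqref{31036}. Working in the regime guaranteed by Lemma~5.5, namely $\rho > \tilde\rho_0$, the constant term $B(t) := H_{22}(t) - \rho\bar H_{22}(t) - H_{33}(t)H_{13}^2(t) - H_{44}(t)H_{14}^2(t)$ is uniformly positive on $[0,T]$ (here I take the intended coefficient to be $H_{33}H_{13}^2$ as in Lemma~5.6 and \eqref{31036}, reading the $H_{23}$ in the statement as a typo). A preliminary sign observation is then immediate: evaluating $-\dot{\tilde k}^i$ at $t = \tilde T_i$ gives $-B(\tilde T_i) < 0$, so $\tilde k^i_t$ is strictly negative just below $\tilde T_i$. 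An invariance argument on the locus $\{\tilde k = 0\}$ (along which $\dot{\tilde k} = B > 0$, so trajectories cross it in one direction only) then shows $\tilde k^i \leq 0$ throughout the backward existence interval $(t^{\tilde k^i}_\rho, \tilde T_i]$, and blow-up, if it occurs, is necessarily to $-\infty$.

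For the monotonicity, suppose $\tilde T_1 < \tilde T_2$. At $t = \tilde T_1$ one has $\tilde k^1_{\tilde T_1} = 0$ while $\tilde k^2_{\tilde T_1} \leq 0$ by the preceding step. Both solutions satisfy the same locally Lipschitz Riccati ODE on $(t^*, \tilde T_1]$ with $t^* := \max\{t^{\tilde k^1}_\rho, t^{\tilde k^2}_\rho\}$, and uniqueness (no crossing of trajectories) yields $\tilde k^2_t \leq \tilde k^1_t$ there. Since $\tilde k^2$ is uniformly more negative than $\tilde k^1$ on their common backward existence interval, $\tilde k^2$ must reach $-\infty$ at least as soon as $\tilde k^1$ does in backward time, i.e.\ $t^{\tilde k^2}_\rho \geq t^{\tilde k^1}_\rho$, which is the asserted monotonicity.

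For the continuity, given a sequence $\tilde T^{(n)} \to \tilde T^*$ with finite blow-up times, I would establish the two semicontinuity directions separately. Upper semicontinuity follows from standard continuous dependence of ODEs on terminal data: for any $\epsilon > 0$, $\tilde k^*$ is uniformly bounded on $[t^{\tilde k^*}_\rho + \epsilon, \tilde T^*]$, and this bound propagates to $\tilde k^{(n)}$ on $[t^{\tilde k^*}_\rho + \epsilon, \tilde T^{(n)}]$ for $n$ large, yielding $t^{\tilde k^{(n)}}_\rho \leq t^{\tilde k^*}_\rho + \epsilon$. Lower semicontinuity is more delicate: one picks $t^\dagger$ slightly above $t^{\tilde k^*}_\rho$ at which $\tilde k^*_{t^\dagger}$ is already very negative, uses continuous dependence to get $\tilde k^{(n)}_{t^\dagger}$ equally negative for $n$ large, and then invokes a blow-up localization estimate for the Riccati-type ODE to conclude that such a solution must explode to $-\infty$ within a backward distance that vanishes as $|\tilde k^*_{t^\dagger}| \to \infty$. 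This last step is the main technical obstacle; it is identical in spirit to the argument in \cite[Lemma~4.6]{JW2023} and transports to the present setting by the same blow-up estimate, so I would simply refer to that proof at this point.
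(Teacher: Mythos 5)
Your proposal is correct and follows essentially the same route as the paper, which in fact offers no written proof of this lemma at all but simply defers to \cite[Lemmas 4.5--4.6]{JW2023}; your sign analysis ($\tilde k^i\leq 0$ with blow-up to $-\infty$ when the constant term is positive for $\rho>\tilde\rho_0$), the no-crossing comparison giving the monotonicity of the blow-up time, and the two-sided semicontinuity argument resting on the blow-up localization estimate are exactly the ingredients that citation supplies. Your reading of $H_{23}(t)H_{13}^2(t)$ as a typo for $H_{33}(t)H_{13}^2(t)$ is also consistent with \eqref{31036}.
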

Next we present another main result of this paper.
\begin{theorem}\label{thm58}
  Under Assumption (H4), there exists a sequence eigenvalue $\rho_1<\rho_2<\rho_3<\cdots$ contained in $(\rho_b,+\infty)$, which satisfies $\rho_m\rightarrow +\infty$ as $m\rightarrow +\infty$. Moreover, the dimensional of eigenfunction space corresponding to each $\rho_m$ is $1$.
\end{theorem}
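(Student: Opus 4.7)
The strategy is to construct the eigenvalues iteratively by tracking an alternating sequence of blow-up times for the Riccati equation \eqref{3935} and its dual \eqref{31036}, generalizing the single-blow-up argument of Theorem \ref{theorem44} to allow multiple passages through singularities.

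Set $\tau_0(\rho):=T$ and let $\tau_1(\rho):=t^{k}_\rho$ denote the blow-up time of \eqref{3935} with terminal data $k_T=0$. Lemmas \ref{lemma52}--\ref{lemma53} imply that $\tau_1$ is continuous and strictly increasing on $(\rho_0,+\infty)$ with $\tau_1(\rho)\nearrow T$ as $\rho\to+\infty$ and $\tau_1(\rho)\to-\infty$ as $\rho\searrow \rho_0$. Define $\rho_1$ as the unique root of $\tau_1(\rho)=0$; since $\rho_1>\rho_0\geq\rho_b$, it belongs to $(\rho_b,+\infty)$. The associated eigenfunction is obtained via Lemma \ref{lemma3.2}: set $y_t=k_t(\rho_1)x_t$ together with the algebraically determined $z_t,\theta_t$ and let $x_t$ solve the SDE \eqref{22519}. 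The boundary condition $x_0=0$ is automatic from $k_0(\rho_1)=+\infty$, equivalently $\tilde k_0(\rho_1)=0$; the scalar nature of $x$ combined with the uniqueness part of Lemma \ref{lemma3.2} then forces the eigenspace to be one-dimensional, a single nonzero value of $\lim_{t\downarrow 0}\tilde k_t(\rho_1)^{-1}\bigl[\tilde k_t(\rho_1)x_t\bigr]/\!\!$ fixing the normalization.

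For $\rho>\rho_1$ we have $\tau_1(\rho)>0$, so the $k$-trajectory reaches infinity strictly before $t=0$; we continue past this singularity via $\tilde k=1/k$, which satisfies \eqref{31036} with $\tilde k_{\tau_1(\rho)}=0$, and let $\tau_2(\rho)$ be its blow-up time. Recursively, once $\tau_{m-1}(\rho)>0$ is defined, let $\tau_m(\rho)$ be the blow-up time of \eqref{3935} if $m$ is odd and of \eqref{31036} if $m$ is even, starting from zero terminal datum at $\tau_{m-1}(\rho)$. Combining the $\rho$-monotonicity of Lemmas \ref{lemma52} and \ref{lemma54} with the terminal-time continuity in Lemma \ref{lemma57} and its (unnumbered) counterpart for \eqref{3935}, an induction on $m$ shows that each $\tau_m$ is continuous and strictly increasing on its natural domain, with $\tau_m(\rho)\to T$ as $\rho\to+\infty$ and $\tau_m(\rho)\to\tau_{m-1}(\rho_{m-1})=0^{-}$ as $\rho\searrow\rho_{m-1}$. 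The intermediate value theorem then furnishes a unique $\rho_m>\rho_{m-1}$ with $\tau_m(\rho_m)=0$ and $\tau_j(\rho_m)>0$ for $j<m$. The strict ordering $\rho_1<\rho_2<\cdots$ and the divergence $\rho_m\to+\infty$ follow immediately.

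The eigenfunction associated with $\rho_m$ is then glued from $m$ pieces exactly as in Theorem \ref{theorem44}: on each subinterval $(\tau_j(\rho_m),\tau_{j-1}(\rho_m)]$ we invoke Lemma \ref{lemma3.2} applied either to \eqref{31034} (using $y=k\,x$) or to the dual system \eqref{31136} (using $\tilde y=\tilde k\,\tilde x$, and then converting back via \eqref{12612}), and we match adjacent pieces at each interior $\tau_j(\rho_m)$ via the reciprocal identity $k\tilde k=1$ together with the state-swap $(\tilde x,\tilde y)=(y,x)$. The boundary conditions $y_T=0$ and $x_0=0$ reduce to $k_T=0$ and $\tilde k_0=0$, both satisfied by construction. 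The uniqueness statement in Lemma \ref{lemma3.2} together with the scalar nature of $x$ forces any two eigenfunctions for $\rho_m$ to be proportional, giving dimension one. The main technical obstacle is the inductive step establishing monotonicity and continuity of $\tau_m(\rho)$ when the terminal time $\tau_{m-1}(\rho)$ itself depends on $\rho$: one writes $\tau_m(\rho)=\Psi_m(\rho,\tau_{m-1}(\rho))$ and must combine the direct coefficient-dependence of $\Psi_m$ in $\rho$ with the moving-endpoint effect, then check that both are monotone in the same direction so that composition with the monotone, continuous $\tau_{m-1}$ preserves these properties.
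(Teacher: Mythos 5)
Your overall strategy --- tracking an alternating chain of blow-up times of the Riccati equation \eqref{3935} and of its dual \eqref{31036}, and locating eigenvalues as the parameter values at which one of these times reaches $t=0$ --- is the same as the paper's. However, there is a genuine parity error in your enumeration. An odd-indexed time $\tau_{2j+1}$ is a blow-up of $k$, i.e.\ a zero of $\tilde k=1/k$, so $\tau_{2j+1}(\rho)=0$ does encode the boundary condition $x_0=\tilde k_0 y_0=0$. An even-indexed time $\tau_{2j}$ is a blow-up of $\tilde k$, i.e.\ a zero of $k$; the condition $\tau_{2j}(\rho)=0$ gives $k_0=0$, which encodes $y_0=0$, not $x_0=0$. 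Indeed, near such a point the representation valid on $[0,\tau_{2j-1}]$ is $x=\tilde k y$ with $\tilde k\to\infty$ at $t=0$, so imposing $x_0=0$ forces $y_0=0$ as well, and forward uniqueness then yields only the trivial solution. Hence your assertion that ``$x_0=0$ reduces to $\tilde k_0=0$, both satisfied by construction'' fails for every even $m$: the roots of $\tau_{2j}(\rho)=0$ are not eigenvalues, and roughly half of your sequence is spurious. The correct indexing, as in the paper, is that consecutive eigenvalues are the roots of $\tau_{2n+1}(\rho)=0$, $\tau_{2n+3}(\rho)=0,\dots$; each new eigenvalue costs \emph{two} additional blow-up passages, not one.

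A secondary gap concerns the base case. You take $\rho_1$ to be the root of $\tau_1(\rho)=0$, asserting $\tau_1(\rho)\to-\infty$ as $\rho\searrow\rho_0$. Lemma \ref{lemma53} only gives continuity and strict monotonicity on $(\rho_0,+\infty)$; when $\rho_0=\rho_b>\rho_a$ the limit of $\tau_1$ at $\rho_0^{+}$ may well be a finite positive number, in which case $\tau_1=0$ has no root in $(\rho_0,+\infty)$ and the first eigenvalue must be sought among deeper odd-indexed times. The paper handles this by showing that for each fixed $\rho>\rho_b$ the gaps $t^{i-1}_{\rho}-t^{i}_{\rho}$ are bounded below by a positive constant, so that $t^{2n+1}_{\rho}<0$ for some $n$, and then minimizing over all pairs $(n,\rho)$ with $t^{2n+1}_{\rho}=0$; this produces an offset $2n$ that propagates through the whole sequence and must be carried in your induction. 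The remaining ingredients of your argument (monotonicity and continuity of the composed maps $\tau_m(\rho)=\Psi_m(\rho,\tau_{m-1}(\rho))$ via Lemma \ref{lemma57}, the gluing of the eigenfunction by Lemma \ref{lemma3.2} and the Legendre transformation, and the one-dimensionality from uniqueness) match the paper.
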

\begin{proof}
  We first defined a sequence of blow-up times $T>t_{\rho}^1>t_{\rho}^2>\cdots$. Let $t_{\rho}^1$  be the blow-up time of Riccati equation \eqref{3935}. By Lemma \ref{lemma52}-\ref{lemma53}, we know that $t_{\rho}^1$ is a strictly increasing and continuous bijective mapping of $\rho$. Then consider the dual Riccati equation  \eqref{31036} with terminal condition $\tilde k_{t_{\rho}^1}=0$ as follows,
  \begin{equation}\label{31137}
  \left\{
  \begin{aligned}
    &-\dot{\tilde k}_t=-[2H_{21}(t)+H_{13}^2(t)+H_{14}^2(t)]\tilde k_t-H_{11}(t)\tilde k_t^2\\
    &\qquad\qquad -[H_{22}(t)-\rho \bar H_{22}(t)-H_{23}(t)H_{13}^2(t) -H_{44}(t)H_{14}^2(t)],\\
    &\tilde k_{t_{\rho}^1}=0.
  \end{aligned}
  \right.
\end{equation}
Denote by $t^2_{\rho}$ the blow-up time of Riccati equation \eqref{31137}, which is also a strictly increasing and continuous bijective mapping of $\rho$ by Lemma \ref{lemma54}-\ref{lemma55}. Let us consider the Riccati equation \eqref{3935} with terminal control $k_{t^2_{\rho}}=0$ and defined the blow-up time by $t^3_{\rho}$, whose properties is similar to $t_{\rho}^1$ and $t_{\rho}^2$. By repeating the above steps, we can define a sequence blow-up time $T>t_{\rho}^0>t_{\rho}^1>t_{\rho}^2>\cdots$ satisfying $\lim_{\rho\nearrow +\infty}t_{\rho}^i=T$, $i=1,2,\cdots$.

For any given $\tilde\rho>\rho_b$, $(\tilde T-t_{\tilde\rho,\tilde T}^k)\wedge (\tilde T-t_{\tilde\rho,\tilde T}^{\tilde k})>0$, where $t_{\tilde\rho,\tilde T}^k$ (resp. $t_{\tilde\rho,\tilde T}^{\tilde k}$) denotes the blow-up time of Riccati equation \eqref{3935} (resp. dual Riccati equation \eqref{31036}) with terminal condition $k_{\tilde T}(\tilde \rho)=0$ (resp. $\tilde k_{\tilde T}(\tilde \rho)=0$). Then there exists $n \in \mathbb N_+\bigcup \{0\}$ and $\rho \geq \lambda_b$, such that
\begin{equation*}
  \begin{aligned}
    T-t_{\rho}^{2n+1}=\sum_{i=1}^{2n+1}(t^{i-1}_{\rho}-t^i_{\rho})=\sum_{i=0}^n(t_{\rho}^{2i}- t^{k}_{\rho,t_{\rho}^{2i}})+\sum_{i=1}^n(t^{2i-1}_{\rho}-t^{\tilde k}_{\rho,t^{2i-1}_{\rho}}) >T.
  \end{aligned}
\end{equation*}
It follows that for this $\rho$ and $n$, $t^{2n+1}_{\rho}<0$. Since $t_{\rho}^{2n+1}$ is strictly increasing continuous bijective mapping and $\lim_{\rho\nearrow+\infty}t_{\rho}^{2n+1}=T$, there exist unique minimizer $\rho_1>\rho_b$ and $2n\in \mathbb N_+\bigcup\{0\}$ such that $t^{2n+1}_{\rho_1}=0$. By Lemma \ref{lemma52}-\ref{lemma57}, we know that $t_{2n+1+2m}$, $m=0,1,2,\cdots$ are also strictly increasing continuous bijective mapping and satisfy $\lim_{\rho\nearrow +\infty}t_{\rho}^{2n+1+2m}=T$. Moreover, there exists a unique $\rho_{m+1}\in (\rho_m,+\infty)$, such that $t_{\rho_m}^{2n+1+2m}=0$. Hence, we derive a sequence $\{\rho_m\}_{m=1}^{\infty}$ satisfying $\rho_b<\rho_1<\rho_2<\cdots$ and $t_{\rho_{m}}^{2n+1+2m}=0$.

Next, we prove that $\{\rho_m\}_{m=1}^{\infty}$ are exactly all the eigenvalues of Hamiltonian system \eqref{31034} on $(\rho_b,+\infty)$. In fact, similar to Theorem \ref{theorem44}, we would like to construct the related eigenfunctions of eigenvalues $\{\rho_m\}_{m=1}^{\infty}$. Noticing that
\begin{equation*}
  0=t^{2n-1+2m}_{\rho_m}<t^{2n-2+2m}_{\rho_m}<\cdots<t_{\rho_m}^2<t_{\rho_m}^1<T,
\end{equation*}
we divide $[0,T]$ into $2m+2n$ parts, $I_1=[t_1,\tilde t_1]:=[0,\frac{t^{2n-2+2m}_{\rho_m}}{2}]$, $I_2=[t_2,\tilde t_2]:=[\frac{t^{2n-2+2m}_{\rho_m}}{2}, \frac{t^{2n-2+2m}_{\rho_m}+t^{2n-3+2m}_{\rho_m}}{2}]$, $\cdots$, $I_{2n-1+2m}=[t_{2n-1+2m},\tilde t_{2n-1+2m}]:=[\frac{t^2_{\rho_m}+t^1_{\rho_m}}{2}, \frac{t^1_{\rho_m}+T}{2}]$, $I_{2n+2m}=[t_{2n+2m},\tilde t_{2n+2m}]:=[\frac{t^1_{\rho_m}+T}{2}, T]$. By the Legendre transformation in Section \ref{3}, we obtain $\tilde k_t(\rho)=k_t(\rho)$ if they are non-zero. According to above processes, one can know that $\tilde k_{\cdot}(\rho)$ exists on $I_1\bigcup I_3\bigcup\cdots \bigcup I_{2m-1+2n}$ and $ k_{\cdot}(\rho)$ exists on $I_2\bigcup I_4\bigcup\cdots \bigcup I_{2m+2n}$. By the similar argument of Theorem \ref{theorem44}, we solve the dual Hamiltonian system \eqref{31136} with boundary condition $\tilde x_0=\tilde x\ne 0$, $\tilde y_{\tilde t_1}=\tilde k_{\tilde t_1}\tilde x_{\tilde t_1}$ by Lemma \ref{lemma3.2} as follows,
\begin{equation*}
  \left\{
  \begin{aligned}
    &\mathrm{d}\tilde x_t=[\tilde H_{21}(t)+\tilde H_{22}(t)\tilde K_t+\tilde H_{23}(t) \tilde L_t+\tilde H_{24}(t)\tilde P_{t}]\tilde x_t\mathrm{d}t\\
    &\qquad\quad+[\tilde H_{31}(t)+\tilde H_{32}(t)\tilde K_t+\tilde H_{33}(t) \tilde L_t]\tilde x_t\mathrm{d}B_t\\
    &\qquad\quad +[\tilde H_{41}(t)+\tilde H_{42}(t)\tilde K_{t} +\tilde H_{44}(t)\tilde P_{t}]\tilde x_{t-}\mathrm{d}\tilde V_t,\\
    &\tilde x_0=\tilde x,\quad  \tilde y_{\tilde t_1}=\tilde K_{\tilde t_1}\tilde x_{\tilde t_1},\quad \tilde z_{\tilde t_1}=\tilde L_{\tilde t_1}\tilde x_{\tilde t_1},\quad \tilde \theta_{\tilde t_1}=\tilde P_{\tilde t_1}\tilde x_{\tilde t_1},\quad t\in I_1.
  \end{aligned}
  \right.
\end{equation*}
Noticing Assumption (H4), we know $\tilde H_{23}=-H_{13}\tilde H_{33}$, $\tilde H_{24}=-H_{14}\tilde H_{44}$. Let $c=\frac{1}{\max_{t\in [0,T]}|H_{13}(t)|^2+1}$, we easily verify that Condition \eqref{weaker} hold, which implies that the dual Hamiltonian system \eqref{31136} admits a unique solution on $I_1$. By the similar method, we can solve the Hamiltonian system \eqref{31034} with boundary condition $x_{t_2}=\tilde y_{\tilde t_1}$, $y_{\tilde t_2}= k_{\tilde t_2}x_{\tilde t_2}$ on $I_2$. We also obtain the unique solution $(x_t,y_t,z_t,\theta_t) =(x_t,k_t,L_tx_t,P_tx_t)$ on $I_2$ by Lemma \ref{lemma3.2}. Then dual Hamiltonian system \eqref{31136} with boundary condition $\tilde x_{t_3}=y_{\tilde t_2}$, $\tilde y_{t_4}=\tilde k_{t_4}\tilde x_{t_4}$ also admits a unique solution on $I_3$ by Lemma \ref{lemma3.2}. By repeating the above steps, we get a unique solution $(x_{\cdot},y_{\cdot},z_{\cdot},\theta_{\cdot})$ of Hamiltonian system \eqref{31034} on $I_2\bigcup I_4\bigcup\cdots \bigcup I_{2m+2n}$  and a unique solution $(\tilde x_{\cdot},\tilde y_{\cdot},\tilde z_{\cdot},\tilde \theta_{\cdot})$ of dual Hamiltonian system \eqref{31136} on $I_1\bigcup I_3\bigcup\cdots \bigcup I_{2m-1+2n}$. By virtue of dual degendre transformation, let
\begin{equation*}
  \left\{
  \begin{aligned}
    &(\tilde y_t,\tilde x_t, \tilde H_{31}(t)\tilde x_t+\tilde H_{32}(t)\tilde y_t+\tilde H_{33}(t)\tilde z_t,\tilde H_{41}(t)\tilde x_t+\tilde H_{42}(t)\tilde y_t+\tilde H_{44}(t)\tilde \theta_t),\quad t\in I_1, I_3,\cdots, I_{2m-1+2n},\\
    &(x_t,y_t,z_t,\theta_t),\quad t\in I_2,I_4,\cdots, I_{2n+2m},
  \end{aligned}
  \right.
\end{equation*}
is a nontrivial solution to Hamiltonian system \eqref{31034} corresponding to eigenvalue $\rho_m$.  From the uniqueness result in Lemma \ref{lemma3.2}, we can prove that the eigenfunction space is just spanned by this eigenfunction. Thus it is of one-dimensional. In addition, we also can prove that no other eigenvalue.
\end{proof}

In Theorem \ref{thm58}, we characterize the eigenvalue on $(\rho_b,+\infty)$, but we did not study that on $(0,\rho_b]$. Next, we introduce some additional assumption to investigate the eigenvalue problem on $\mathbb R_+$.

\noindent \textbf{Assumption (H5)} The coefficients of Hamiltonian system \eqref{31034} satisfy
\begin{equation*}
  4\|H_{11}\|_{\infty}\|H_{22}-\rho_b\bar H_{22}-H_{33}H_{13}^2-H_{44}H_{14}^2\|_{\infty}\leq \|2H_{21}+H_{13}^2+H_{14}^2\|_{\infty}^2<\frac{4}{T^2}.
\end{equation*}

Now we introduce an auxiliary equation as follows,
\begin{equation*}
  \left\{
  \begin{aligned}
    &-\dot{k}_{1,t}=\|2H_{21}+H_{13}^2+H_{14}^2\|_{\infty} k_{1,t}+\|H_{11}\|_{\infty} +\|H_{22}-\rho_b\bar H_{22}-H_{33}H_{13}^2-H_{44}H_{14}^2\|_{\infty} k_{1,t}^2,\\
    &k_{1,T}=0,\qquad \rho \geq \rho_b, \qquad t\leq T,
  \end{aligned}
  \right.
\end{equation*}
According to comparison theorem, we have $0\leq k_t(\rho_b)\leq k_{1,t}(\rho_b)\leq k_{1,t}(\rho)$, $\rho\geq \rho_b$. Then we obtain the following result of the blow-up time of above Riccati equation by the similar method of Lemma \ref{lemma41}.
\begin{lemma}\label{lemma59}
  Under Assumption (H4)-(H5), then $\lim_{\rho\searrow \rho_b}t_{\rho}^{k_1}<0$. Moreover, $\lim_{\rho\searrow \rho_b}t_{\rho}^{k}<0$.
\end{lemma}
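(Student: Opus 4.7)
The plan is to read off the blow-up time of the auxiliary Riccati equation in closed form, use Assumption (H5) to verify that this blow-up time is strictly negative, and then transfer the conclusion to the original Riccati equation \eqref{3935} via the comparison $k_t(\rho_b)\leq k_{1,t}(\rho_b)$ noted immediately before the lemma together with the continuity of $\rho\mapsto t^k_\rho$ from Lemma \ref{lemma53}.

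Concretely, set $a:=\|2H_{21}+H_{13}^2+H_{14}^2\|_{\infty}$, $b:=\|H_{11}\|_{\infty}$ and $c:=\|H_{22}-\rho_b\bar H_{22}-H_{33}H_{13}^2-H_{44}H_{14}^2\|_{\infty}$, so that $k_1$ solves the autonomous scalar ODE $-\dot k_{1,t}=ck_{1,t}^2+ak_{1,t}+b$ with $k_{1,T}=0$. Since $b>0$, the solution is positive and strictly increasing as $t$ decreases from $T$, and separation of variables yields
\begin{equation*}
T-t=\int_0^{k_{1,t}}\frac{ds}{cs^2+as+b},\qquad t_\rho^{k_1}=T-\int_0^{\infty}\frac{ds}{cs^2+as+b}.
\end{equation*}
The first half of Assumption (H5) gives $\Delta:=a^2-4bc\geq 0$, so $cs^2+as+b$ has real roots, both non-positive because $a,b,c>0$. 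A partial-fractions computation gives $\int_0^{\infty}\frac{ds}{cs^2+as+b}=\frac{1}{\sqrt\Delta}\ln\frac{a+\sqrt\Delta}{a-\sqrt\Delta}$ when $\Delta>0$, and $2/a$ when $\Delta=0$; in both cases the elementary inequality $\ln\frac{1+r}{1-r}\geq 2r$ on $[0,1)$, applied with $r=\sqrt\Delta/a\in[0,1)$, produces $\int_0^{\infty}\frac{ds}{cs^2+as+b}\geq \frac{2}{a}$. The second half of (H5) is exactly $a<2/T$, i.e.\ $2/a>T$, so $t_\rho^{k_1}\leq T-2/a<0$, which proves the first statement.

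For the moreover part, the inequality $0\leq k_t(\rho_b)\leq k_{1,t}(\rho_b)$ forces $t^k_{\rho_b}\leq t^{k_1}<0$, and the continuity (and strict monotonicity) of $\rho\mapsto t^k_\rho$ supplied by Lemma \ref{lemma53} then yields $\lim_{\rho\searrow\rho_b}t^k_\rho\leq t^{k_1}<0$.

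I expect the main obstacle to be bookkeeping rather than conceptual: one must justify the comparison $k_t(\rho_b)\leq k_{1,t}(\rho_b)$, i.e.\ that replacing each time-dependent coefficient in \eqref{3935} by its $L^\infty$-norm produces a genuine backward super-solution. Since $H_{11}$ and $H_{22}-\rho_b\bar H_{22}-H_{33}H_{13}^2-H_{44}H_{14}^2$ are non-negative (the latter by the very definition of $\rho_b$), and since $2H_{21}+H_{13}^2+H_{14}^2$ is pointwise dominated by $a$ regardless of sign, writing $v:=k_{1,t}(\rho_b)-k_t(\rho_b)$ and computing $-\dot v$ gives a linear inequality of the form $-\dot v\geq [A+C(k_1+k)]v+(\text{non-negative})$ with $v(T)=0$, from which $v\geq 0$ on the common interval of existence by the same kind of backward-comparison argument used repeatedly in Lemmas \ref{lemma41}--\ref{lemma43}. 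Once this is in place, the explicit integration and the inequality $\ln\frac{1+r}{1-r}\geq 2r$ are routine calculus.
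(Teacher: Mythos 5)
Your proposal is correct and follows exactly the route the paper intends: the paper gives no written proof of this lemma beyond ``by the similar method of Lemma \ref{lemma41}'', i.e.\ comparison with an explicitly solvable constant-coefficient scalar Riccati equation, and your closed-form evaluation $t^{k_1}=T-\int_0^\infty(cs^2+as+b)^{-1}\,\mathrm{d}s$ together with $\int_0^\infty(cs^2+as+b)^{-1}\,\mathrm{d}s\geq 2/a>T$ under (H5) is precisely the computation being alluded to. The only point worth recording explicitly is the degenerate case $c=0$ (no blow-up at all, so the conclusion is immediate) and that the ``moreover'' part should be closed via $t^k_\rho\leq t^{k_1}_\rho$ for each $\rho\geq\rho_b$ rather than via continuity of $t^k_\rho$ at $\rho_b$ itself, which Lemma \ref{lemma53} only supplies on the open interval $(\rho_0,+\infty)$.
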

By Lemma \ref{lemma59}, $k_{\cdot}(\rho_b)$ exists on $[0,T]$. Since $H_{22}-\rho\bar H_{22}-H_{33}H_{13}^2-H_{44}H_{14}^2\leq H_{22}-\rho_b\bar H_{22}-H_{33}H_{13}^2-H_{44}H_{14}^2$, we obtain $0\leq k_t(\rho)\leq k_t(\rho_b)$ by comparison theorem. Then for any $\rho\in (0,\rho_b]$, $k_{\cdot}(\rho)$ exists on whole $[0,T]$, which means that Hamiltonian system \eqref{31034} admits a unique trivial solution and there is not any other eigenvalue on $(0,\rho_b)$. In summary,
\begin{theorem}
  Under Assumption (H4)-(H5), there exists a sequence eigenvalue $\{\rho_m\}_{m=1}^{\infty}\in \mathbb R_+$, which satisfies $\rho_m\rightarrow +\infty$ as $m\rightarrow +\infty$. Moreover, the dimensional of eigenfunction space corresponding to each $\rho_m$ is $1$.
\end{theorem}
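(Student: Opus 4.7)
The plan is to combine Theorem \ref{thm58} with a comparison argument that rules out eigenvalues in the interval $(0,\rho_b]$. Since Theorem \ref{thm58} already delivers a strictly increasing sequence $\rho_b<\rho_1<\rho_2<\cdots$ diverging to $+\infty$, together with the one-dimensionality of each eigenfunction space on $(\rho_b,+\infty)$, the only thing left is to prove that no $\rho\in(0,\rho_b]$ is an eigenvalue.

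First I would invoke Lemma \ref{lemma59}, which under Assumptions (H4)--(H5) guarantees that the blow-up time $t^k_\rho$ of Riccati equation \eqref{3935} satisfies $\lim_{\rho\searrow\rho_b}t^k_\rho<0$. Since Lemma \ref{lemma52} tells us $t^k_\rho$ is monotone in $\rho$ (with the sign dictated by the sign of $\bar H_{22}$), and since by Lemma \ref{lemma53} this monotonicity makes $t^k_\rho$ decreasing as $\rho$ decreases, the relation $\lim_{\rho\searrow\rho_b}t^k_\rho<0$ already shows $t^k_{\rho_b}<0$, so $k_\cdot(\rho_b)$ exists on all of $[0,T]$.

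Next I would extend this existence to every $\rho\in(0,\rho_b]$ by a comparison step. Using $\bar H_{22}(t)<0$ and $\rho\leq\rho_b$, we have $-\rho\bar H_{22}(t)\leq-\rho_b\bar H_{22}(t)$, so the quadratic coefficient in the Riccati equation \eqref{3935} at parameter $\rho$ is pointwise dominated by that at $\rho_b$. Combined with $H_{11}(t)\geq 0$ and the sign of the linear term being independent of $\rho$, the comparison theorem (as used repeatedly in Lemmas \ref{lemma41}--\ref{lemma52}) yields $0\leq k_t(\rho)\leq k_t(\rho_b)$ for all $t\in[0,T]$. Hence $k_\cdot(\rho)$ is bounded and well-defined on the whole interval.

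Finally, with $k_\cdot(\rho)$ existing on $[0,T]$, Assumption (H4)(iii) gives $H_{23}=-H_{33}H_{13}$ and $H_{24}=-H_{44}H_{14}$, which is exactly the algebraic identity that makes the weaker invertibility/nondegeneracy condition \eqref{weaker} hold (with the explicit constant $c$ used in the proof of Theorem \ref{thm58}). Applying Lemma \ref{lemma3.2} with boundary data $x_0=0$ and $y_T=0=\bar K x_T$ (taking $\bar K=0$), the Hamiltonian system \eqref{31034} corresponding to such $\rho$ admits only the trivial solution $(x,y,z,\theta)\equiv 0$, so $\rho$ cannot be an eigenvalue. Combining this with Theorem \ref{thm58} finishes the proof. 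The main obstacle is the comparison step: one must make sure the monotonicity direction of the $\rho$-dependent coefficient is correctly handled given $\bar H_{22}<0$, and that the weaker condition \eqref{weaker} rather than the full invertibility \eqref{assum} is genuinely sufficient for uniqueness here; both are ensured by Assumption (H4)(iii) exactly as in the proof of Theorem \ref{thm58}.
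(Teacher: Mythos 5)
Your proposal is correct and follows essentially the same route as the paper: invoke Lemma \ref{lemma59} to get global existence of $k_\cdot(\rho_b)$ on $[0,T]$, use the comparison $0\leq k_t(\rho)\leq k_t(\rho_b)$ for $\rho\in(0,\rho_b]$ (valid since $\bar H_{22}<0$ makes the quadratic coefficient monotone in $\rho$), conclude that no eigenvalue lies in $(0,\rho_b]$, and combine with Theorem \ref{thm58}. Your explicit appeal to Lemma \ref{lemma3.2} and condition \eqref{weaker} for the uniqueness step is a small elaboration of a point the paper leaves implicit, not a different argument.
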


\subsection{The order of growth for the eigenvalue}
In this subsection, we study the order of growth for the eigenvalue $\{\rho_m\}_{m=1}^{\infty}$ in Theorem \ref{thm58}.
\begin{theorem}\label{thm511}
  Suppose Assumption (H4) holds. Let $\{\rho_m\}_{m=1}^{\infty}$ be all eigenvalue contained on $(\rho_b,+\infty)$, then
  \begin{equation*}
    \rho_{m}=O(m^2),\qquad \text{as}\ \  m\rightarrow +\infty.
  \end{equation*}
\end{theorem}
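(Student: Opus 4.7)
The plan is to show that, as $\rho\to+\infty$, each of the successive blow-up lengths $t^{i-1}_{\rho}-t^i_{\rho}$ constructed in the proof of Theorem \ref{thm58} is at most $C/\sqrt{\rho}$ for a constant $C$ independent of $i$. Summing the $N_m$ such lengths, with $N_m=2n+2m-1$ linear in $m$, and setting the total equal to $T$ then forces $\sqrt{\rho_m}\leq C N_m/T$, which is exactly the claim $\rho_m=O(m^2)$.

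First I would analyze the primal Riccati equation \eqref{3935} with zero terminal condition at an arbitrary $T_0\in(0,T]$. Writing it as $-\dot k_t=A(t)k_t+B(t)+g(t,\rho)k_t^2$ with $A=2H_{21}+H_{13}^2+H_{14}^2$, $B=H_{11}$, and $g(t,\rho)=H_{22}-\rho\bar H_{22}-H_{33}H_{13}^2-H_{44}H_{14}^2$, Assumption (H4) gives $|A|$ uniformly bounded, $B\geq\beta>0$, and $g(t,\rho)\geq c_1\rho$ for some $c_1>0$ and all sufficiently large $\rho$ (using $\bar H_{22}<h_2<0$). A completion-of-the-square argument shows that, for $\rho$ large enough and all $k\geq 0$, $t\in[0,T]$,
\[
A(t)k+B(t)+g(t,\rho)k^2 \;\geq\; \tfrac{\beta}{2}+\tfrac{c_1\rho}{2}k^2.
\]
Comparing with the autonomous Riccati $-\dot{\underline k}=\tfrac{\beta}{2}+\tfrac{c_1\rho}{2}\underline k^2$, $\underline k_{T_0}=0$, whose explicit tangent solution blows up at $T_0-\pi/\sqrt{c_1\beta\rho}$, the standard comparison principle (as already used in Lemma \ref{lemma41}) yields $T_0-t^k_{\rho,T_0}\leq\pi/\sqrt{c_1\beta\rho}$, uniformly in $T_0\in[0,T]$.

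For the dual Riccati equation \eqref{31036} with zero terminal, I would observe that $\tilde k$ becomes negative immediately below $T_0$, so the substitution $u=-\tilde k$ in the backward variable $s=T_0-t$ yields $du/ds=g(t,\rho)-A(t)u+H_{11}(t)u^2$, $u(0)=0$. Another completion-of-the-square gives $du/ds\geq\tfrac{c_1\rho}{2}+\tfrac{\beta}{2}u^2$ for large $\rho$, so by comparison the dual blow-up length is again bounded above by $\pi/\sqrt{c_1\beta\rho}$.

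Setting $C=\pi/\sqrt{c_1\beta}$, the construction of $\rho_m$ from Theorem \ref{thm58} gives $T=\sum_{i=1}^{N_m}(t^{i-1}_{\rho_m}-t^i_{\rho_m})\leq N_m C/\sqrt{\rho_m}$, which rearranges to $\rho_m\leq(CN_m/T)^2=O(m^2)$. The main technical obstacle to verify carefully is that the constant $C$ in the blow-up-length estimate is truly independent of which of the $N_m$ intervals one is working on; this uniformity is what makes the summation step legitimate. Fortunately it comes for free from the completion-of-the-square argument, because every intermediate Riccati problem starts from a zero terminal condition of either the primal or the dual equation, and the coefficients $H_{kl}$ and $\bar H_{22}$ are uniformly bounded on $[0,T]$ by Assumption (H4).
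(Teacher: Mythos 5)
Your proposal is correct and proves the stated bound, but by a genuinely different route from the paper. The paper's proof sandwiches the time-dependent system between two \emph{time-invariant} Hamiltonian systems built from uniform upper and lower bounds $\hat H_{kl}$, $\check H_{kl}$ on the coefficients, uses the comparison theorem to order the corresponding blow-up times $t^{2n+1+2m}_{\rho}\geq \check t^{2n+1+2m}_{\rho}$, and then imports the known asymptotics $\check\rho_m\sim m^2$ for autonomous systems from the earlier literature; as a byproduct it also obtains the matching lower bound $\rho_m\geq\hat\rho_m\sim m^2$, i.e.\ it actually shows $\rho_m=\Theta(m^2)$ although only $O(m^2)$ is claimed. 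You instead estimate each blow-up length directly: the completion-of-the-square reductions of \eqref{3935} and \eqref{31036} to the autonomous comparison equations $-\dot{\underline k}=\tfrac{\beta}{2}+\tfrac{c_1\rho}{2}\underline k^2$ and $dv/ds=\tfrac{c_1\rho}{2}+\tfrac{\beta}{2}v^2$ are sound (the inequalities hold for all real $k$, not just $k\geq 0$, so there is no circularity in the comparison step; uniformity in the terminal point $T_0$ follows from the uniform bounds on $H_{kl}$, $H_{11}\geq\beta$, and $-\bar H_{22}\geq|h_2|>0$ supplied by Assumption (H4)), the explicit tangent solutions give the $\pi/\sqrt{c_1\beta\rho}$ bound on each primal and dual blow-up length for $\rho$ large, and the telescoping identity $T=\sum_{i=1}^{N_m}(t^{i-1}_{\rho_m}-t^i_{\rho_m})$ with $N_m$ linear in $m$ then yields $\rho_m=O(m^2)$. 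Your argument is more self-contained and elementary --- it does not rely on the cited time-invariant result --- and it gives an explicit constant; what it does not give (and the paper's does) is the complementary lower bound showing the $m^2$ rate is sharp. Since $\rho_m\to+\infty$, the restriction to ``$\rho$ sufficiently large'' in your comparison step affects only finitely many $m$ and is harmless for an asymptotic statement.
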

\begin{proof}
  We choose some constant such that for $\forall t\in [0,T]$,
  \begin{equation*}
    \begin{aligned}
      &0<\check H_{11}<H_{11}(t)<\hat H_{11}, \quad &&\check H_{22}<H_{22}(t)<\hat H_{22}<0,\\
      &0\leq \check H_{13} \leq |H_{13}(t)|\leq \hat H_{13},\quad &&\check H_{33}<H_{33}(t)<\hat H_{33}<0,\\
      &0\leq \check H_{14} \leq |H_{14}(t)|\leq \hat H_{14},\quad &&\check H_{44}<H_{44}(t)<\hat H_{44}<0,\\
      &\check H_{21}\leq H_{21}(t)\leq \hat H_{21},\quad &&  \check {\bar H}_{22}<\bar H_{22}(t)<\hat {\bar H}_{22}<0.
    \end{aligned}
  \end{equation*}
  And we assume that $\hat H_{23}:=-\check H_{33}\hat H_{13}$, $\check H_{23}:=-\hat H_{33}\check H_{13}$, $\hat H_{24}:=-\check H_{44}\hat H_{14}$, $\check H_{24}:=-\hat H_{44}\check H_{14}$.

  We first prove that there are $\{\rho_m\}_{m=1}^{\infty}$, such that $\rho_m\leq \check \rho_m$ and $\check \rho_m\sim m^2$ as $m\rightarrow +\infty$. We introduce the stochastic Hamiltonian system with following time-invariance coefficient matrix
  \begin{equation*}
    \left(\begin{matrix}
      \check H_{11}&\check H_{12}&\check H_{13}&\check H_{14}\\
      \check H_{21}& \check H_{22}&\check H_{23}&\check H_{24}\\
      \check H_{31}&\check H_{32}&\hat H_{33}&0\\
      \check H_{41}&\check H_{42}&0&\hat H_{44}
    \end{matrix}\right),
  \end{equation*}
  which satisfies Assumption (H1). Then we consider the eigenvalue problem of following time-invariance Hamiltonian system
  \begin{equation}\label{31340}
  \left\{
  \begin{aligned}
    &\mathrm{d}x_t=[\check H_{21}(t)x_t+\check H_{22}(t)y_t-\rho \hat{\bar H}_{22}(t)y_t +\check H_{23}(t) z_t+ \check H_{24}(t) \theta_{t}]\mathrm{d}t\\
    &\qquad\quad +[ \check H_{31}(t)x_t+ \check H_{32}(t)y_t+\hat H_{33}(t)z_t]\mathrm{d}B_t\\
    &\qquad\quad+[ \check H_{41}(t)x_{t-}+ \check H_{42}(t)y_{t-}+\hat H_{44}(t)\theta_{t}]\mathrm{d}\tilde V_t,\\
    &\mathrm{d}y_t=-[\check H_{11}(t)x_t+\check H_{12}(t)y_t+\check H_{13}(t)z_t+\check H_{14}(t) \theta_{t}]\mathrm{d}t+z_t\mathrm{d}B_t+\theta_{t}\mathrm{d}\tilde V_t,\\
    &x_0=0,\qquad y_T=0.
  \end{aligned}
  \right.
\end{equation}
Let $\check \rho_m$ be the eigenvalue of above Hamiltonian system. Correspondingly, we introduce the following Riccati equation
\begin{equation*}
  \begin{aligned}
    &-\dot{\check k}_t=[2\check H_{21}(t)+\check H_{13}^2(t)+\check H_{14}^2(t)]k_t+\check H_{11}(t)\\
    &\qquad\qquad +[\check H_{22}(t)-\rho \hat{\bar H}_{22}(t)-\hat H_{33}(t)\check H_{13}^2(t) -\hat H_{44}(t)\check H_{14}^2(t)]k_t^2,
  \end{aligned}
\end{equation*}
and dual Riccati equation
\begin{equation*}
  \begin{aligned}
    &-\dot{\check{\tilde k}}_t=-[2\check H_{21}(t)+\check H_{13}^2(t)+\check H_{14}^2(t)]\check {\tilde k}_t-\check H_{11}(t)\check {\tilde k}_t^2\\
    &\qquad\qquad -[\check H_{22}(t)-\rho \check {\bar H}_{22}(t)-\hat H_{33}(t)\check H_{13}^2(t) -\hat H_{44}(t)\check H_{14}^2(t)].
  \end{aligned}
\end{equation*}
Recalling Riccati equation \eqref{3935}, we have $k_t(\rho)\geq \check k_t(\rho)\geq 0$ and $\tilde k_{t}(\rho)\leq \check{\tilde k}_t(\rho)\leq 0$ by comparison theorem. Furthermore, the relationship of the blow-up time of these Riccati equations can also be obtained, $t_{\rho}^{k}\geq t_{\rho}^{\check k}$ and $t^{\tilde k}_{\rho}\geq t^{\check{\tilde k}}_{\rho}$. Moreover, by the similar argument of Theorem \ref{thm58}, for sufficiently large $\rho$, $t^{2n+1+2m}_{\rho}\geq \check t^{2n+1+2m}_{\rho}$, $m=0,1,2,\cdots$, where $\{t^{2n-1+2m}_{\cdot}\}$ and $\{\check t^{2n-1+2m}_{\cdot}\}$ are respectively related to Hamiltonian system \eqref{31034} and \eqref{31340}. Then for sufficiently large $m$, we get $\rho_m\leq \check \rho_m$. Moreover, by the similar method of \cite{JW2021}, we have $\frac{\check\rho_m\hat{\bar H}_{22}}{\check H_{22}}=O(m^2)$ as $m\rightarrow +\infty$.

Next we will prove that there are $\{\hat\rho_m\}_{m=1}^{\infty}$, such that $\rho_m\geq \hat\rho_m$ and $\hat\rho_m\sim m^2$ as $m\rightarrow +\infty$. We introduce the stochastic Hamiltonian system with following time-invariance coefficient matrix
  \begin{equation*}
    \left(\begin{matrix}
      \hat H_{11}&\hat H_{12}&\hat H_{13}&\hat H_{14}\\
      \hat H_{21}&\hat H_{22}&\hat H_{23}&\hat H_{24}\\
      \hat H_{31}&\hat H_{32}&\check H_{33}&0\\
      \hat H_{41}&\hat H_{42}&0&\check H_{44}
    \end{matrix}\right),
  \end{equation*}
  which satisfies Assumption (H1). Then by the similar argument of the first step, we can obtain the desired result.
\end{proof}

\bibliographystyle{plain}
\bibliography{eigenvalues}

\end{document}